
\documentclass{article}
\usepackage{amssymb}
\usepackage{amsfonts}
\usepackage{amsmath}
\usepackage{graphicx}
\usepackage{amsgen,color}

\setcounter{MaxMatrixCols}{10}

\providecommand{\U}[1]{\protect\rule{.1in}{.1in}}
\numberwithin{equation}{section}
\setlength{\textwidth}{6.5in}
\numberwithin{equation}{section}
\setlength{\textwidth}{6.5in}
\setlength{\textheight}{9in}
\numberwithin{equation}{section}
\setlength{\oddsidemargin}{0in}
\setlength{\evensidemargin}{0in}
\setlength{\topmargin}{-0.5in}
\def\e{\varepsilon}

\newtheorem{theorem}{Theorem}[section]

\newtheorem{definition}[theorem]{Definition}

\newtheorem{lemma}[theorem]{Lemma}

\newtheorem{proposition}[theorem]{Proposition}

\newtheorem{remark}[theorem]{Remark}

\newenvironment{proof}[1][Proof]{\noindent\textbf{#1.} }{\ \rule{0.5em}{0.5em}}

\begin{document}

\title{Integral representation results in $BV\times L^p$.} 
\author{\textsc{Gra\c{c}a Carita}\thanks{%
CIMA-UE, Departamento de Matem\'{a}tica, Universidade de \'{E}vora, Rua Rom%
\~{a}o Ramalho, 59 7000 671 \'{E}vora, Portugal e-mail: gcarita@uevora.pt}, 
\textsc{\ Elvira Zappale}\thanks{%
D.I.In., Universita' degli Studi di Salerno, Via Giovanni Paolo II 132,
84084 Fisciano (SA) Italy e-mail:ezappale@unisa.it}}
\maketitle

\begin{abstract}
Integral representation results are obtained for the relaxation of some classes of energy functionals depending on two vector fields with different
behaviors, which may appear in the context of image decomposition and thermochemical equilibrium problems. 

Keywords: relaxation, convexity-quasiconvexity, functions of bounded variations.

MSC2000 classification: 49J45, 74Q05
\end{abstract}

\section{Introduction}

Minimization of energies depending on two independent vector fields have been introduced to model several phenomena. Namely, when $u$ is a Sobolev function  in $W^{1,q}, q >1,$ and $v$ is in $L^p$, the study of these energies (see \eqref{functional}) was motivated by the analysis of coherent thermochemical equilibria in a multiphase multicomponent system, with $\nabla u$ representing the elastic strain and $v$ the chemical composition of the material. In the theory of linear magnetostriction, the stored energy depends on the linearized strain and the direction of magnetization, we refer to \cite{FKP2, FKP1} and the references therein for more details. Moreover, when $p=q$ this type of energies is used to model Cosserat theory and bending phenomena in nonlinear  elasticity and also for the description of thin structures, see \cite{LDR} and \cite{BFM}. Here $v$ takes into account either Cosserat vectors or bending moments and $\nabla u$ is the elastic strain. When $u$ is a function of bounded variation, functionals similar to \eqref{functional} enter into image decomposition models, i.e., in order to denoise and restore a given image $f$, it is required to minimize a functional which is the sum of a 'total variation' term (i.e. a `norm' of $Du$) and a penalization term, i.e. a norm in a suitable functional space of $f-u-v$. Essentially $f$ can be decomposed into the sum of two components $u$ and $v$.  The first component (cartoon), $u$, is well structured and it describes the homogeneous objects which are present in the image, while the second component, $v$, contains the oscillating pattern (both textures and noise). We refer to \cite{ROF, Meyer, VO,VO2, AABFC} among the extensive literature in this field.

In order to cover a wide class of applications we start from the functional setting $W^{1,1}\times L^p$, with anisotropic energies with linear growth in the gradient variable $\nabla u$. Indeed, let $1<p\leq\infty$, for every $(u,v) \in W^{1,1}( \Omega;%
\mathbb{R}^d) \times L^{p}(\Omega;\mathbb{R}^{m}) $ 
define the functional 
\begin{equation}  \label{functional}
J(u,v) :=\int_{\Omega}f(x,u,v,\nabla u) dx
\end{equation}
where $f:\Omega\times\mathbb R^d\times\mathbb{R}^{m}\times\mathbb{R}%
^{d\times N}\rightarrow[ 0,+\infty) $ is a continuous function satisfying standard coercivity and growth conditions that will be precised below. We discuss separately the cases $1<p<\infty$ and $p=\infty.$ Thus we introduce for $1<p<\infty$ the functional%
\begin{equation}
\overline{J}_{p}( u,v) :=\inf\{ \underset{n\rightarrow +\infty
}{\lim\inf}J(u_{n},v_{n}) :u_{n}\in W^{1,1}(\Omega;\mathbb{R}^d),~v_{n}\in L^{p}(\Omega;\mathbb R^m)
,~u_{n}\rightarrow u\text{ in }L^{1},~v_{n}\rightharpoonup v\text{ in }%
L^{p}\} ,   \label{relaxedp}
\end{equation}
for any pair $(u,v) \in BV( \Omega;\mathbb R^d)
\times L^{p}( \Omega;\mathbb{R}^{m}) $ and, for $p=\infty$ the
functional%
\begin{equation}
\overline{J}_{\infty}( u,v) :=\inf\{ \underset{n\rightarrow
\infty}{\lim\inf}J(u_{n},v_{n}) :u_{n}\in W^{1,1}( \Omega;%
\mathbb R^d) ,v_{n}\in L^{\infty}( \Omega ;\mathbb{R}%
^{m}) ,u_{n}\rightarrow u\text{ in }L^{1},v_{n}\overset{\ast}{%
\rightharpoonup}v\text{ in }L^{\infty}\} ,   \label{relaxedinfty}
\end{equation}
for any pair $(u,v) \in BV(\Omega;\mathbb R^d)
\times L^{\infty}(\Omega;\mathbb{R}^{m}).$

Since bounded sequences $\{u_n\}$ in $W^{1,1}(\Omega;\mathbb R^d)$ converge
in $L^1$ to a $BV$ function $u$ and bounded sequences $\{v_n\}$ in $%
L^p(\Omega;\mathbb{R}^m)$ if $1<p\leq \infty$, 
weakly converge to a function $v \in L^p(\Omega;\mathbb{%
R}^m)$, (weakly $\ast$ in $L^\infty$), the relaxed functionals $\overline{J}%
_{p}$ and $\overline{J}_{\infty}$ will be composed by a Lebesgue part, a
jump part concentrated on the jump set of $u\in BV(\Omega;\mathbb{R}^d)$ and
a Cantor part, absolutely continuous with respect to the Cantor part of the
distributional gradient $Du$. On the other hand, as already emphasized in 
\cite{FKP2}, it is crucial to observe that $v$ is not pointwise defined on the jump
and the `Cantor' parts sets of $u$, thus specific features of the density $f$
will come into play to ensure a proper integral representation. The one of \eqref{relaxedp} is obtained in Theorem \ref{MainResultp} below, via the blow-up method introduced in \cite{FM2},
under the following hypotheses:
\begin{itemize}
	\item[$(H_0)$] $f(x,u,\cdot, \cdot)$ is convex-quasiconvex for every $%
	(x,u)\in \Omega \times \mathbb R^d$;
	\item[$(H_1)_{p}$] There exists a positive constant $C$ such that%
	\begin{equation*}
	\frac{1}{C}( \vert b\vert ^{p}+\vert \xi\vert
	) -C\leq f( x,u,b,\xi) \leq C( 1+\vert
	b\vert ^{p}+\vert \xi\vert) \hbox{ for every }(x,u,b,\xi) \in\Omega\times\mathbb R ^d\times \mathbb{R}%
	^{m}\times\mathbb{R}^{d\times N};
	\end{equation*}
	\item[$(H_{2}) _{p}$] For every compact set $K\subset\Omega
	\times\mathbb{R}^d$ there exists a continuous function $\omega _{K}:%
	\mathbb{R\rightarrow}[ 0,+\infty) $ with $\omega_{K}(
	0) =0$ such that
	\begin{itemize}
		\item[$(1)$] $\vert f( x,u,b,\xi) -f(
		x^{\prime},u^{\prime},b,\xi) \vert \leq\omega_{K}(\vert x-x^{\prime }\vert +\vert u-u^{\prime}\vert) ( 1+\vert b\vert ^{p}+\vert \xi\vert) $
		for every $(x,u,b,\xi) $ and $( x^{\prime},u^{\prime
		},b,\xi) \in K\times\mathbb{R}^{m}\times\mathbb{R}^{d\times N};$
		\item[$(2)$] Moreover, given $x_{0}\in\Omega$ and $\varepsilon>0$ there exists $\delta>0$
		such that if $\vert x-x_{0}\vert <\delta$ then 
		\begin{equation*}
		\displaystyle{f(
			x,u,b,\xi) -f( x_{0},u,b,\xi) \geq-\varepsilon(
			1+\vert b\vert ^{p}+\vert \xi\vert),} \hbox{ for every }(u,b,\xi) \in\mathbb{R}^d\times\mathbb{R}^{m}\times%
		\mathbb{R}^{d\times N};
		\end{equation*} 
		
	\end{itemize}
	\item[$( H_{3})_{p}$] There exist $c^{\prime}>0,~L>0,~0<\tau%
	\leq1$, such that%
	\begin{equation*}
	t>0,~\xi\in\mathbb{R}^{d\times N},\text{ with }t\vert \xi\vert
	+ t|b|^p>L\Longrightarrow\left\vert \frac{f( x,u,t^{\frac{1}{p}}b,t\xi) }{t}-f_p^{\infty
	}( x,u,b,\xi) \right\vert \leq c'\frac{|b|^{(1-\tau)p}+ |\xi|^{1-\tau}}{t^\tau} 
	\end{equation*}
	for every $(x,u) \in\Omega\times\mathbb R^d,$ where $f_p^{\infty }$ is the $(p,1)-$ recession function of $f$ defined for every $%
	(x,u,b,\xi)\in \Omega \times \mathbb{R}^d\times \mathbb{R}^{m}\times \mathbb R^{d\times N}$ as 
	\begin{equation}
	\displaystyle{f_p^{\infty }(x,u,b,\xi ):=\limsup_{t\rightarrow +\infty}\frac{%
			f(x,u,t^{\frac{1}{p}}b,t\xi )}{t}}.  \label{recessionp}
	\end{equation}
\end{itemize}

\begin{theorem}
\label{MainResultp} Let $J$ and ${\overline J}_p$ be given by \eqref{functional} and  \eqref{relaxedp}, respectively, with $f$ satisfying $(H_0)$, $(H_1)_p-(H_3)_p$ then%
\begin{equation}
\label{representationLp}
\overline{J}_p(u,v)= \int_\Omega f(x,u,v, \nabla u) dx +
\int_{J_u} K_p(x,0, u^+, u^-,\nu_u)d\mathcal{H}^{N-1} +
\int_\Omega f_p^\infty(x,u,0,\tfrac{dDu}{d|D^c u|}) d|D^c u|,
\end{equation}
for every $(u,v)\in BV(\Omega;\mathbb{R}^d)\times L^p(\Omega;\mathbb{R}^m)$,
where $K_p:\Omega \times\mathbb{R}%
^m\times\mathbb{R}^d\times\mathbb{R}^{d}\times S^{N-1}\rightarrow[
0,+\infty) $ is defined as
\begin{equation}
	K_p( x,b,c,d,\nu) :=\inf\left\{ \int_{Q_{\nu}}f_p^{\infty}(x,w( y),\eta( y) ,\nabla w(y))
	dy:w\in\mathcal{A}( c,d,\nu), \eta\in L^{\infty}( Q_{\nu};
	\mathbb{R}^{m}), \int_{Q_{\nu}}\eta\,dy=b\right\}, 
	\label{Kp}
\end{equation}
with 
\begin{equation}\label{calA}
\begin{array}{ll}
{\mathcal A}(c,d,\nu):=\{w \in W^{1,1}(Q_\nu;\mathbb R^d): &w(y)= c \hbox{ if } y\cdot \nu = \frac{1}{2}, w(y)= d  \hbox{ if } y \cdot \nu=-\frac{1}{2},\\
\\
&w \hbox{ is  }1-\text{periodic in }  \nu_1,\dots, \nu_{N-1}\,  \text{directions}\}.
\end{array}
\end{equation}

\end{theorem}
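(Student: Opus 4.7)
The plan is to follow the blow-up method of Fonseca--M\"uller from \cite{FM2}. I localize by introducing, for every open $A\subseteq \Omega$, the functional $\overline{J}_p(u,v;A)$ defined as in \eqref{relaxedp} with $\Omega$ replaced by $A$. Using the upper bound in $(H_1)_p$ and a standard De Giorgi--Letta argument, one checks that $A\mapsto \overline{J}_p(u,v;A)$ is the trace on open sets of a Radon measure $\mu$ on $\Omega$ with $\mu \le C(\mathcal{L}^N\lfloor\Omega + |v|^p\mathcal{L}^N + |Du|)$, hence absolutely continuous with respect to $\mathcal{L}^N + \mathcal{H}^{N-1}\lfloor J_u + |D^c u|$. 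By the Radon--Nikodym and Besicovitch differentiation theorems, proving \eqref{representationLp} reduces to matching the three Radon--Nikodym densities of $\mu$ with the integrands on the right-hand side, through a pointwise lower bound combined with a matching recovery sequence.

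For the lower bound I fix an admissible sequence $(u_n,v_n)$ with $\liminf J(u_n,v_n)<\infty$ and let $\mu$ be a weak-$*$ cluster point of the measures $\mu_n:=f(\cdot,u_n,v_n,\nabla u_n)\mathcal{L}^N$. Three blow-up analyses identify the densities. At $\mathcal{L}^N$-a.e.\ Lebesgue point $x_0$ of $(u,v,\nabla u)$, the rescaling $u^\rho(y)=(u(x_0+\rho y)-u(x_0))/\rho$, $v^\rho(y)=v(x_0+\rho y)$ combined with $(H_0)$, $(H_2)_p$ and convex-quasiconvex lower semicontinuity in $W^{1,p}\times L^p_{\mathrm{weak}}$ yields the bulk density $f(x_0,u(x_0),v(x_0),\nabla u(x_0))$. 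At $|D^c u|$-a.e.\ $x_0$, rescaling by $t_\rho:=|Du|(Q(x_0,\rho))/\rho^{N-1}\to\infty$ and applying $(H_3)_p$ replaces $f/t_\rho$ by $f_p^\infty$; the $L^p$-mass of $v_n$ on $Q(x_0,\rho)$ is then multiplied by $t_\rho^{-1}\to 0$, forcing the $b$-slot of $f_p^\infty$ to vanish and producing the Cantor density $f_p^\infty(x_0,u(x_0),0,\tfrac{dDu}{d|D^c u|}(x_0))$. At $\mathcal{H}^{N-1}$-a.e.\ jump point $x_0$, rescaling on the unit cube $Q_{\nu_u(x_0)}$ produces a limiting pair $(w,\eta)$ admissible in \eqref{Kp}--\eqref{calA} for $c=u^+(x_0)$, $d=u^-(x_0)$, $\nu=\nu_u(x_0)$ with $\int_{Q_{\nu}}\eta\,dy=0$, delivering the surface density $K_p(x_0,0,u^+(x_0),u^-(x_0),\nu_u(x_0))$.

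For the upper bound I build matching recovery sequences. The absolutely continuous part is obtained by smooth approximation of $u$ in area-strict convergence, using continuity in $(x,u)$ from $(H_2)_p$ and growth from $(H_1)_p$ to diagonalize. For the Cantor and jump parts, a Besicovitch cover of the relevant singular set by disjoint cubes reduces the construction to inserting, on each cube, an almost-optimal profile for $f_p^\infty$ or an almost-optimal test pair for the infimum in \eqref{Kp}, glued to the background through standard cut-off functions. Positive $(p,1)$-homogeneity of $f_p^\infty$ makes the rescaling neutral, so the local contributions sum to the prescribed integrals up to a vanishing error.

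The main obstacle is the jump analysis, in particular the appearance of $b=0$ in $K_p$ even when $v(x_0)\neq 0$. One must control the rescaled $L^p$-component so that its $L^p$-mass remains bounded while its average on $Q_{\nu}$ tends to $0$; the test $\eta$ in \eqref{Kp} then absorbs the residual oscillation. This requires a careful interplay between $(H_1)_p$, which yields $L^p$-equi-integrability of $v_n$ on small cubes, $(H_3)_p$, which permits substituting $f$ by $f_p^\infty$ uniformly in the blow-up parameter, and a De Giorgi-type averaging to enforce the exact mean constraint. The Cantor step is technically analogous but easier, since no auxiliary $L^p$-profile enters the formula.
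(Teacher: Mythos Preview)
Your outline matches the paper's strategy: blow-up for the lower bound along the three densities, and explicit recovery sequences for the upper bound, with the jump analysis producing $b=0$ in $K_p$ via the rescaled $L^p$-mass vanishing. Two corrections are needed. First, in the Cantor blow-up your scaling factor is wrong: at $|D^c u|$-a.e.\ point one has $|Du|(Q(x_0,\rho))/\rho^{N-1}\to 0$, not $\infty$; the correct quantity that diverges is $\theta_\rho:=|Du|(Q(x_0,\rho))/\rho^{N}$ (cf.\ \eqref{eq4Cantorlower} and \eqref{thetak}), and it is this $\theta_\rho$ that drives the passage to $f_p^\infty$. Second, you underestimate what the Cantor lower bound actually requires: the paper does not simply ``apply $(H_3)_p$'' but goes through a four-step argument in which one freezes $(x,u)$ via a delicate truncation of $\bar u_k$ coupled with a $p$-equi-integrable replacement of $\bar v_k$, then modifies both sequences near $\partial(\tau Q)$ by a slicing/averaging procedure to impose periodicity on $w$ and zero mean on $V$, and only then invokes convex-quasiconvexity of $f$ at the fixed $(x_0,u(x_0))$ before letting $\theta_k\to\infty$ to reach $f_p^\infty$. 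Your sketch of the upper bound is also considerably lighter than what is done: the jump part in the paper proceeds through planar interfaces, then polygonal sets, then general sets of finite perimeter, with a Yosida-transform argument for the Cantor part and successive truncations to pass from $BV\cap L^\infty\times L^\infty$ to $BV\times L^p$.
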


In order to provide an integral description of the functional $\overline{J}_{\infty }$, introduced in $(\ref{relaxedinfty})$, we prove Theorem \ref{MainResultinfty} replacing assumptions $(H_{1}) _{p}-( H_{3}) _{p}$ by the\ following ones:

\begin{itemize}
\item[$( H_{1}) _{\infty}$] Given $M>0,$ there exists $C_{M}>0$ 
such that, if $\vert b\vert \leq M$ then%
\begin{equation*}
\tfrac{1}{C_{M}}\vert \xi\vert -C_{M}\leq f(x,u,b,\xi)
\leq C_{M}( 1+\vert \xi\vert) , \hbox{ for every }(x,u,\xi) \in\Omega\times\mathbb{R}^d\times\mathbb{R%
}^{d\times N};
\end{equation*}

\item[$(H_2)_{\infty}$] For every $M>0,$ and for every compact set $%
K\subset\Omega\times\mathbb{R}^d$ there exists a continuous function 
$\omega_{M,K}(0) =0$ such that if $\vert b\vert \leq M$
then
\begin{equation*}
\vert f(x,u,b,\xi) -f(
x^{\prime},u^{\prime},b,\xi) \vert \leq\omega_{M,K}(
\vert x-x^{\prime }\vert +\vert u-u^{\prime}\vert
) ( 1+\vert \xi\vert) 
\end{equation*}
for every $(x,u,\xi) , (x^{\prime},u^{\prime},\xi)
\in K\times\mathbb{R}^{d\times N}.$  Moreover, given $M>0,$ $x_{0}\in\Omega,$ and $\varepsilon>0$ there exists $%
\delta>0$ such that if $\vert b\vert \leq M$ and $\vert
x-x_{0}\vert \leq\delta$ then%
\begin{equation*}
f( x,u,b,\xi) -f( x_{0},u,b,\xi) \geq-\varepsilon
( 1+\vert \xi\vert)  \hbox{ for every }( u,\xi) \in\mathbb{R}^d\times\mathbb{R}^{d\times N};
\end{equation*}

\item[$(H_{3})_{\infty }$] Given $M>0$, there exist $c_{M}^{\prime
}>0,~L>0,~0<\tau \leq 1$ such that 
\begin{equation*}
\vert b\vert \leq M,~t>0,~\xi \in \mathbb{R}^{d\times N},~\text{%
with }t\vert \xi \vert >L\Longrightarrow \left\vert \tfrac{f(
x,u,b,t\xi) }{t}-f^{\infty }( x,u,b,\xi) \right\vert \leq
c_M'\tfrac{\vert \xi \vert ^{1-\tau }}{t^{\tau }}
\end{equation*}%
for every $( x,u) \in \Omega \times \mathbb R^d,$
where $f^\infty(b,\xi)$  is the $(\infty-1)-$recession function, i.e. the `standard' recession function in the last variable, defined for every $%
(x,u,b,\xi)\in \Omega \times \mathbb R^d\times \mathbb{R}^{m}\times \mathbb R^{d\times N}$ as 
\begin{equation}
\label{recession}
\displaystyle{f^\infty(x,u,b,\xi):=\limsup_{t \to +\infty}\frac{f(x,u,b,t\xi)}{t}.}
\end{equation}
\end{itemize}

\begin{theorem}
\label{MainResultinfty} Let $J$ and ${\overline J}_\infty$ be given by \eqref{functional} and \eqref{relaxedinfty}, respectively, with $f$
satisfying $(H_0)$, $(H_1)_{\infty}-(H_3)_{\infty}$ then
$$
\overline{J}_\infty(u,v)= \int_\Omega f(x,u,v,\nabla u) dx +
\int_{J_u} K_\infty(x,0, u^+, u^-,\nu_u)d\mathcal{H}^{N-1} +
\int_\Omega f^\infty(x,u,0,\tfrac{dDu}{d|D^c u|}) d|D^c u|,
$$
for every $(u,v)\in BV(\Omega;\mathbb{R}^d)\times L^p(\Omega;\mathbb{R}^m)$,
where $K_\infty:\Omega \times\mathbb{R}%
^m\times\mathbb{R}^d\times\mathbb{R}^d\times S^{N-1}\rightarrow[
0,+\infty) $  is defined by 
\begin{equation}
K_\infty( x,b,c,d,\nu) :=\inf\left\{ \int_{Q_{\nu}}f^{\infty}(x,w( y),\eta( y) ,\nabla w(y))
dy:w\in\mathcal{A}( c,d,\nu), \eta\in L^{\infty}( Q_{\nu};
\mathbb{R}^{m}), \int_{Q_{\nu}}\eta\,dy=b\right\}, 
\label{Kinfty}
\end{equation}
where ${\cal A}(c,d,\nu)$ is as in \eqref{calA}.
\end{theorem}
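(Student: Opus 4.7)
The plan is to repeat the blow-up scheme developed for Theorem \ref{MainResultp}, exploiting the key observation that any sequence $\{v_n\}$ with $v_n \overset{\ast}{\rightharpoonup} v$ in $L^\infty$ is automatically uniformly bounded, say $\|v_n\|_\infty \le M$ for every $n$. Once this $M$ is fixed, hypotheses $(H_1)_\infty$--$(H_3)_\infty$ deliver growth, continuity and recession estimates on the slab $\Omega\times \mathbb R^d\times \{|b|\le M\}\times \mathbb R^{d\times N}$ which are structurally identical to those of the $p$-case, with the term $|b|^p$ simply bounded by the constant $M^p$. Thus essentially every estimate developed for Theorem \ref{MainResultp} transplants, and the only issue is to locate the specific places where the argument needs adjustment.

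For the lower bound I would take a competitor $(u_n,v_n)$ with $\liminf_n J(u_n,v_n)<+\infty$, extract a subsequence so that the energy measures $\mu_n := f(\cdot,u_n,v_n,\nabla u_n)\,\mathcal L^N\lfloor\Omega$ converge weakly-$\ast$ to some $\mu \in \mathcal M(\Omega)$, decompose $\mu$ with respect to $\mathcal L^N + \mathcal H^{N-1}\lfloor J_u + |D^c u|$, and estimate the three Radon--Nikodym densities by blow-up at appropriate generic points. At a Lebesgue point of $(u,v)$, the uniform $L^\infty$-bound on $v_n$ together with convex-quasiconvex lower semicontinuity under $(L^1,\text{weak-}\ast\,L^\infty)$-convergence (which follows from $(H_0)$ and $(H_1)_\infty$) yields the density $f(x_0,u(x_0),v(x_0),\nabla u(x_0))$. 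At $\mathcal H^{N-1}$-a.e.\ $x_0\in J_u$, rescaling in the oriented cube $Q_{\nu_u(x_0)}$, using $(H_2)_\infty$ to localize $x\mapsto f$ and invoking $(H_3)_\infty$ to replace $f$ by $f^\infty$ in the rescaled integrand, produces pairs $(w_n,\eta_n)$ with $w_n\in \mathcal A(u^+(x_0),u^-(x_0),\nu_u(x_0))$ (after standard boundary corrections) and $\eta_n\in L^\infty$ with $\int_{Q_{\nu_u(x_0)}}\eta_n\,dy\to 0$, so the jump density is bounded below by $K_\infty(x_0,0,u^+(x_0),u^-(x_0),\nu_u(x_0))$. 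The Cantor part is treated by the standard $BV$ blow-up of \cite{FM2}, again using $(H_3)_\infty$ to pass to $f^\infty(x_0,u(x_0),0,\tfrac{dDu}{d|D^c u|}(x_0))$.

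For the upper bound I would build a recovery sequence by localization and gluing: take $(u,v)$ itself on the absolutely continuous part, insert near-minimizers of \eqref{Kinfty} rescaled into shrinking cubes around each jump point, and analogously use cell-problem near-minimizers at $|D^c u|$-generic points, assembled via a De~Giorgi--type cutoff. The near-optimal $\eta$ in \eqref{Kinfty} has mean zero and, rescaled to small cubes, converges weakly-$\ast$ to zero off $J_u$, so after a smooth cutoff the $v$-component of the constructed sequence matches $v$ on the absolutely continuous part.

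The main obstacle is the jump-part lower bound: one has to verify that $(H_3)_\infty$, known only on the slab $|b|\le M$, is strong enough to justify replacing $f(x,u_n,v_n,t\xi)/t$ by $f^\infty(x,u_n,v_n,\xi)$ \emph{uniformly in the bounded} $v_n$ along the blow-up, and simultaneously to extract from $v_n$ a legitimate $L^\infty$ test function $\eta$ with vanishing mean. The $M$-parametric formulation of $(H_1)_\infty$--$(H_3)_\infty$ is tailor-made for this step: once $M := \sup_n\|v_n\|_\infty$ is fixed, the constants $C_M$, $c'_M$ and the moduli $\omega_{M,K}$ become genuinely uniform, and the entire $p$-case argument carries over with $|b|^p$ treated as the constant $M^p$.
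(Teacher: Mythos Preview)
Your overall scheme matches the paper's, but there is a genuine gap in the jump and Cantor lower bounds: you claim that after blow-up the test fields $\eta_n$ satisfy $\int_{Q_\nu}\eta_n\,dy\to 0$, and similarly that in the Cantor part one lands directly on $f^\infty(x_0,u(x_0),0,A)$. Neither follows from the $p$-case argument ``with $|b|^p$ replaced by $M^p$''. In the $p$-case the $v$-variable is rescaled by $\varepsilon_k^{1/p}$ (jump) or $\theta_k^{-1/p}$ (Cantor), and it is precisely this rescaling that drives the weak limit, hence the mean, to $0$. In the $L^\infty$ setting there is no such rescaling: the blown-up fields $v_{n,k}(y)=v_n(x_0+\varepsilon_k y)$ keep their original size, and after the double limit they converge weakly-$\ast$ to some $\alpha\in L^\infty(Q_\nu)$ whose average $y_0$ is the limit of $\tfrac{1}{\varepsilon_k^N}\int_{Q_\nu(x_0,\varepsilon_k)}v\,dx$; on the $\mathcal H^{N-1}$-negligible set $J_u$ there is no reason this should be $0$.

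What the paper does instead is prove an extra structural fact: $K_\infty(x,\cdot,c,d,\nu)$ is \emph{constant} in its second argument (Proposition~\ref{Kindependentofv}), so $K_\infty(x_0,y_0,u^+,u^-,\nu)=K_\infty(x_0,0,u^+,u^-,\nu)$ regardless of which $y_0$ the blow-up selects. The mechanism is that $f^\infty(x,u,b,\cdot)$ is positively $1$-homogeneous (hence vanishes at $\xi=0$), so one can pad an admissible pair $(w,\eta)$ by a layer where $\nabla w=0$ and $\eta$ is any constant, shifting the average of $\eta$ at zero cost. The Cantor part needs the companion fact (from \cite{FKP1}) that for rank-one $\xi$ one has $f^\infty(x,u,b,\xi)=f^\infty(x,u,0,\xi)$, which again uses separate convexity and the $(\infty,1)$-recession structure. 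Without these two observations your lower bound only yields $\mu_j(x_0)\ge K_\infty(x_0,y_0,\dots)$ and $\mu_c(x_0)\ge f^\infty(x_0,u(x_0),y_0(\tau),A)$, which do not match the claimed representation.
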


It is worth to observe that assumption $(H_0)$ can be removed in Theorems \ref{MainResultp} and \ref{MainResultinfty}, thus replacing $f$ by its  convex-quasiconvex envelope in the above integral representations, and in \eqref{recessionp}, \eqref{Kp}, \eqref{recession} and \eqref{Kinfty}.  

We stress the fact that Theorems \ref{MainResultp} and \ref{MainResultinfty} generalize the result contained in \cite[Theorem 1.1]{RZCh} where an energy density $f(x,u,b,\xi):= W(x,u,\xi)+\varphi(x,u,v)$ has been considered. On the other hand we observe that the density $K_p$ (respectively $K_\infty$) in the latter case reduces to the density $K$ introduced in \cite{FR}, appearing in \cite[theorem 2.16]{FM2}, relative to $W$, and $f^\infty_p$ (respectively $f^\infty$) coincides with $W^\infty$, the latter being defined as $W^\infty(x,u,\xi):=\limsup_{t \to +\infty}\tfrac{W(x,u,t\xi)}{t}$.

It is worth to observe that if $f$ does not depend on $u$, the energy densities involved in the representations of $\overline{J}_p$ and $\overline{J}_\infty$ coincide, see Remark \ref{Remark2.17FMr}, \eqref{recessionp} and \eqref{recession}.

The paper is organized as follows. Section \ref{notpre} is devoted to notations, preliminaries and auxiliary results. Section \ref{propenerdens} contains the properties of the energy densities. The proofs of main theorems are in sections \ref{MainLp} and \ref{MainLinfty}.
The Appendix is devoted to remove $(H_0)$ in the proof of Theorems \ref{MainResultp} and \ref{MainResultinfty}.

\section{Notations and auxiliary results}\label{notpre} 

In this section we establish notations and present some
preliminary results on measure theory and functions of bounded variation
that will be useful through the paper. An auxiliary lemma, crucial to obtain the lower bound inequality is also proven.

All over the paper $\Omega$ will represent a bounded open set of $\mathbb R^N$ and ${\cal A}(\Omega)$ will be the family of all open subsets of $\Omega$. We denote by $Q:=(-1/2,1/2)^{N}$ the unit cube in $\mathbb{R}^N$
and if $\nu \in S^{N-1}$ and $(\nu _{1},\dots ,\nu _{N-1},\nu )$ is
an orthonormal basis of $\mathbb{R}^{N}$, $Q_{\nu }$ denotes the unit cube
centered at the origin with its faces parallel to $\nu _{1},\dots
,\nu _{N-1},\nu $. If $x\in \mathbb{R}^{N}$ and $\varepsilon >0$, we set $Q(x,\varepsilon
):=x+\varepsilon \,Q$ and $Q_{\nu }(x,\varepsilon ):=x+\varepsilon \,Q_{\nu }$, and $B(x_0,\varepsilon)\subset \mathbb R^N$ is the ball centered at $x_0$ with radius $\varepsilon.$ By $\mathcal{M}(\Omega)$ we represent the space of all signed Radon measures in $\Omega$ with
bounded total variation. By the Riesz Representation Theorem, $\mathcal{M}%
(\Omega)$ can be identified to the dual of the separable space $C_0(\Omega)$
of continuous functions on $\Omega$ vanishing on the boundary $\partial
\Omega$. The $N$-dimensional Lebesgue measure in $\mathbb{R}^N$ is
designated as $\mathcal{L}^N$, while $\mathcal{H}^{N-1}$ denotes the $(N-1)$-dimensional
Hausdorff measure. 
If $\mu \in \mathcal{M}(\Omega)$ and $\lambda \in \mathcal{M}(\Omega)$ is a
nonnegative Radon measure, we denote by $\frac{d\mu}{d\lambda}$ the
Radon-Nikod\'ym derivative of $\mu$ with respect to $\lambda$. By a
generalization of the Besicovitch Differentiation Theorem (see \cite[%
Proposition 2.2]{ADM}), it can be proved that there exists a Borel set $E
\subset \Omega$ such that $\lambda(E)=0$ and 
\begin{equation}
\label{ADMProposition2.2}
\frac{d\mu}{d\lambda}(x)=\lim_{\varepsilon \to 0^+} \frac{\mu(x+\varepsilon \, C)}{%
\lambda(x+\varepsilon \, C)}
\end{equation}
for all $x \in \mathrm{Supp }\, \mu \setminus E$ and any open convex set $C$
containing the origin. We recall that the exceptional set $E$ does not depend on $C$. The theorem below will be exploited in the sequel, besides not explicitly mentioned.

\begin{theorem}
\label{thm2.8FM2} If $\mu$ is a nonnegative Radon measure and if $f \in L^1_{%
\mathrm{loc}}(\mathbb{R}^N,\mu)$ then 
\begin{equation*}
\lim_{\varepsilon \to 0^+} \frac{1}{\mu(x+ \varepsilon C)}\int_{x+
\varepsilon C} | f(y) - f ( x ) | d\mu(y) =0 
\end{equation*}
for $\mu-$ a.e. $x\in \mathbb{R}^N$ and for every bounded, convex, open set 
$C$ containing the origin.
\end{theorem}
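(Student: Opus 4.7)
The plan is to derive this Lebesgue-point type result from the generalized Besicovitch differentiation theorem already recalled in \eqref{ADMProposition2.2}, combined with the classical separability argument.

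First, I would apply \eqref{ADMProposition2.2} with the Radon measure $\nu := g\mu$ (which belongs to $\mathcal{M}(\Omega')$ for every $\Omega' \Subset \mathbb{R}^N$, since $g \in L^1_{\mathrm{loc}}(\mathbb{R}^N,\mu)$) in place of $\mu$, and with $\lambda := \mu$. Because $d\nu/d\mu = g$ by the Radon--Nikod\'ym theorem, one obtains, for every fixed $g \in L^1_{\mathrm{loc}}(\mathbb{R}^N,\mu)$ and every bounded open convex set $C$ containing the origin,
\begin{equation*}
\lim_{\varepsilon \to 0^+} \frac{1}{\mu(x + \varepsilon C)} \int_{x + \varepsilon C} g(y)\, d\mu(y) = g(x)
\end{equation*}
for $\mu$-a.e. $x$, with an exceptional $\mu$-null set that is independent of $C$ (this independence is precisely the content of \cite[Proposition 2.2]{ADM}).

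Next, I would fix a countable dense subset $D \subset \mathbb{R}$ (working componentwise if $f$ is vector-valued) and apply the previous step to each function $g_c(y) := |f(y) - c|$ with $c \in D$. The countable union of the corresponding exceptional sets is still $\mu$-null and still independent of $C$, so there exists $N$ with $\mu(N)=0$ such that, for every $x \notin N$, every $c \in D$, and every admissible $C$,
\begin{equation*}
\lim_{\varepsilon \to 0^+} \frac{1}{\mu(x + \varepsilon C)} \int_{x + \varepsilon C} |f(y) - c|\, d\mu(y) = |f(x) - c|.
\end{equation*}

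Finally, I would fix $x \notin N$ and, given arbitrary $\eta > 0$, choose $c \in D$ with $|f(x) - c| < \eta$. The triangle inequality then yields
\begin{equation*}
\frac{1}{\mu(x + \varepsilon C)} \int_{x + \varepsilon C} |f(y) - f(x)|\, d\mu(y) \leq \frac{1}{\mu(x + \varepsilon C)} \int_{x + \varepsilon C} |f(y) - c|\, d\mu(y) + |c - f(x)|,
\end{equation*}
so passing to the limsup as $\varepsilon \to 0^+$ gives a bound of $2|c - f(x)| < 2\eta$; the arbitrariness of $\eta$ concludes the proof. The only genuinely delicate ingredient is the fact that the null set in \eqref{ADMProposition2.2} can be chosen independent of $C$, a feature one must invoke from \cite{ADM} rather than reprove; everything else is a standard density-of-constants trick.
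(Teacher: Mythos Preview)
Your argument is correct and follows the standard route to Lebesgue-point results via Besicovitch differentiation and the density-of-constants trick. Note, however, that the paper does not actually prove this theorem: it is stated without proof as a known preliminary result (the label points to Theorem~2.8 in Fonseca--M\"uller \cite{FM2}), so there is no ``paper's own proof'' to compare against. Your proof is precisely the classical one and would serve as a self-contained justification; the only point worth stressing, as you correctly do, is that the independence of the exceptional null set from the convex body $C$ is borrowed from \cite{ADM} rather than reproved.
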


\begin{definition}
A function $u\in L^1(\Omega;\mathbb R^d)$ is said to be of \emph{bounded variation}, and we write $u\in BV(\Omega;\mathbb R^d)$, if all
its first distributional derivatives $D_{j}u_{i}$ belong to $\mathcal{M}%
(\Omega)$ for $1\leq i\leq d$ and $1\leq j\leq N$.
\end{definition}

The matrix-valued measure whose entries are $D_{j}u_{i}$ is denoted by $Du$
and $|Du|$ stands for its total variation. We observe that if $u\in
BV(\Omega;\mathbb R^d)$ then $u\mapsto|Du|(\Omega)$ is lower
semicontinuous in $BV(\Omega;\mathbb R^d )$ with respect to the $L_{%
\mathrm{loc}}^{1}(\Omega;\mathbb{R}^d)$ topology. A set $E \subset \Omega$ has finite perimeter in $\Omega$ if $\text{Per}(E;\Omega):=|D\chi_E|(\Omega)<+\infty$, where $\chi_E$ denotes the characteristic function of $E$. 

We briefly recall some facts about functions of bounded variation and we refer the reader to \cite{AFP} for details. 

\begin{definition}
Given $u\in BV( \Omega;\mathbb{R}^{d}) $ the \emph{approximate
upper}\textit{\ }\emph{limit }and the \emph{approximate lower limit} of each
component $u^{i}$, $i=1,\dots,d$, are defined by 
\begin{equation*}
( u^{i}) ^{+}( x) :=\inf\left\{ t\in\mathbb{R}%
:\,\lim_{\varepsilon\rightarrow0^{+}}\frac{\mathcal{L}^{N}( \{
y\in\Omega\cap Q( x,\varepsilon) :\,u^{i}(y)
>t\}) }{\varepsilon^{N}}=0\right\} 
\end{equation*}
and 
\begin{equation*}
( u^{i}) ^{-}(x) :=\sup\left\{ t\in\mathbb{R}%
:\,\lim_{\varepsilon\rightarrow0^{+}}\frac{\mathcal{L}^{N}( \{
y\in\Omega\cap Q( x,\varepsilon) :\,u^{i}(y)
<t\} ) }{\varepsilon^{N}}=0\right\} , 
\end{equation*}
respectively. The \emph{jump set}\textit{\ }of $u$ is given by 
\begin{equation*}
J_{u}:=\bigcup_{i=1}^{d}\{ x\in\Omega:\,( u^{i}) ^{-}(
x) <( u^{i}) ^{+}(x)\} . 
\end{equation*}
\end{definition}

\begin{proposition}
\label{thm2.3BBBF} If $u\in BV( \Omega;\mathbb R ^d) $ then

\begin{enumerate}
\item[i)] for $\mathcal{L}^{N}-$\hbox{a.e.} $x_{0}\in\Omega,  $  $\displaystyle{\lim_{\varepsilon\to 0^+}\frac{1}{\varepsilon}\left\{\frac {1}{{\varepsilon}^{N}}\int_{Q( x_{0},\varepsilon) }\vert
u(x)-u(x_{0})-\nabla u( x_{0}) \cdot(x-x_{0})\vert ^{\frac{N%
}{N-1}}dx\right\} ^{\frac{N-1}{N}}=0};$

\item[ii)] for $\mathcal{H}^{N-1}-$a.e. $x_{0}\in J_{u}$ there exist $%
u^{+}(x_0) ,$ $u^{-}(x_0) \in\mathbb{R}^d$ and $%
\nu \in S^{N-1}$ normal to $J_{u}$ at $x_{0},$ such that 
\begin{equation}
\lim_{\varepsilon\rightarrow0^{+}}\frac{1}{\varepsilon^{N}}%
\int_{Q_{\nu}^{+}( x_{0},\varepsilon) }\vert u(
x) -u^{+}(x_{0})\vert dx=0,\qquad\lim_{\varepsilon
\rightarrow0^{+}}\frac{1}{\varepsilon^{N}}\int_{Q_{\nu}^{-}(
x_{0},\varepsilon) }\vert u(x) -u^{-}(
x_{0})\vert dx=0,   \label{jump}
\end{equation}
\noindent where $Q_{\nu}^{+}( x_{0},\varepsilon) :=\{ y\in Q_{\nu}( x_{0},\varepsilon) :\langle x-x_{0},\nu\rangle
>0\} $ and $Q_{\nu}^{-}( x_{0},\varepsilon) :=\{ x\in
Q_{\nu}( x_{0},\varepsilon) :\left\langle
x-x_{0},\nu\right\rangle <0\} $;

\item[iii)] for $\mathcal{H}^{N-1}-$a.e. $x_{0}\in\Omega\backslash J_{u}$ \;\;
$
\lim_{\varepsilon\rightarrow0^{+}}\frac{1}{\mathcal{\varepsilon}^{N}}%
\int_{Q(x_{0},\varepsilon)}\vert u(x)-u(x_{0})\vert dx=0. 
$
\end{enumerate}
\end{proposition}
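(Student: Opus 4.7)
The plan is to prove the three assertions by combining the Lebesgue decomposition $Du=\nabla u\,\mathcal L^N + D^s u$ (with $\nabla u\in L^1(\Omega;\mathbb R^{d\times N})$ the Radon--Nikod\'ym density of the absolutely continuous part) with the differentiation theory of Radon measures on cubes, in the spirit of \eqref{ADMProposition2.2} and Theorem \ref{thm2.8FM2}, and with the $BV$ Sobolev--Poincar\'e embedding $BV(Q;\mathbb R^d)\hookrightarrow L^{N/(N-1)}(Q;\mathbb R^d)$.

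For (i), I would first select $\mathcal L^N$-a.e.\ $x_0\in\Omega$ so that, simultaneously, $x_0$ is a Lebesgue point of $u$ and of $\nabla u$, and $|D^s u|(Q(x_0,\varepsilon))/\varepsilon^N\to 0$ (possible since $D^s u\perp\mathcal L^N$, so its upper $\mathcal L^N$-density vanishes $\mathcal L^N$-a.e.). For such $x_0$ rescale by setting $v_\varepsilon(y):=\varepsilon^{-1}\bigl(u(x_0+\varepsilon y)-u(x_0)-\varepsilon\nabla u(x_0)\,y\bigr)$ on $Q$. Then $v_\varepsilon\in BV(Q;\mathbb R^d)$ and
\[
|Dv_\varepsilon|(Q)\;\le\;\frac{1}{\varepsilon^N}\int_{Q(x_0,\varepsilon)}|\nabla u(x)-\nabla u(x_0)|\,dx\;+\;\frac{|D^s u|(Q(x_0,\varepsilon))}{\varepsilon^N}\;\longrightarrow\;0.
\]
The Sobolev--Poincar\'e inequality applied to $v_\varepsilon$ gives $\|v_\varepsilon-(v_\varepsilon)_Q\|_{L^{N/(N-1)}(Q)}\le C|Dv_\varepsilon|(Q)$; the Lebesgue-point property of $u$ at $x_0$ forces the mean $(v_\varepsilon)_Q\to 0$; undoing the rescaling then yields exactly the expression in (i).

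For (ii), the crucial input I would invoke is countable $\mathcal H^{N-1}$-rectifiability of $J_u$ together with the existence of an $\mathcal H^{N-1}$-a.e.\ measure-theoretic normal $\nu=\nu_u(x_0)$ to $J_u$, both consequences of the structure theory of sets of finite perimeter applied to the level sets $\{u^i>t\}$ (which have finite perimeter for $\mathcal L^1$-a.e.\ $t$ by the coarea formula). After orienting $\nu$ so that the $\nu$-half corresponds to the larger approximate limit, I would set $u^\pm(x_0):=\bigl((u^1)^\pm(x_0),\dots,(u^d)^\pm(x_0)\bigr)$ and derive \eqref{jump} by proving the equivalence between the approximate limit $(u^i)^\pm(x_0)$ (defined via density of superlevel sets in $Q(x_0,\varepsilon)$) and the one-sided $L^1$-limit of $u^i$ on the half-cube $Q_\nu^\pm(x_0,\varepsilon)$. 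This equivalence rests on Chebyshev's inequality combined with Theorem \ref{thm2.8FM2} applied to the measures $\chi_{\{u^i>t\}}\,\mathcal L^N$ on the half-cubes, which under the blow-up at a regular jump point converge to a constant multiple of $\mathcal L^N$ on the half-space $\{y\cdot\nu>0\}$.

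Finally, (iii) is essentially a corollary of the definition of $J_u$: outside $J_u$ one has $(u^i)^+(x_0)=(u^i)^-(x_0)$ for every $i$, hence this common value defines a vector $u(x_0)\in\mathbb R^d$, and the $L^1$-mean statement follows from the same Chebyshev/density argument as in (ii) (together with the fact that the set of points outside $J_u$ supporting any part of $|D^s u|$ has zero $\mathcal H^{N-1}$-measure, by the structure theorem relating $|D^s u|$ to $\mathcal H^{N-1}$). The main obstacle is (ii): establishing rectifiability of $J_u$ together with the existence of the normal and of the two-sided approximate traces at $\mathcal H^{N-1}$-a.e.\ jump point requires the full structure theorem of De Giorgi--Federer for sets of finite perimeter (reduced boundary, blow-up to a half-space) combined with Federer's rectifiability criterion. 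In a self-contained exposition this step constitutes the bulk of the work; in the present paper it is simply quoted from \cite{AFP}.
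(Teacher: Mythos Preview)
Your sketch is essentially correct and aligns with the standard proof of these facts in \cite{AFP}; the paper itself does not prove this proposition but simply records it as background and refers the reader to \cite{AFP} for details. Your awareness of this (in the final sentence of your proposal) is accurate, so there is nothing further to compare.
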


The next result, which will be exploited in the proof of the upper bound, can be found in \cite[Theorem 1, Chapter 4]{S}.

\begin{theorem}[Whitney's covering theorem] \label{Whitneythm}
Let $F \subset \mathbb R^N$ be a closed set. Then there exists a countable family of closed  cubes of the form $Q_i:=a_i+\delta Q_\nu$, such that the following hold:
\begin{itemize}
\item[i)] $\mathbb R^N \setminus F= \cup_{i=1}^\infty \overline{Q_i}$;
\item[ii)] the cubes $Q_i$ have mutually disjoint interiors;
\item[iii)] ${\rm diam}\, Q_i\leq {\rm dist}(Q_i,F)\leq 4\, {\rm diam}\,Q_i$.
\end{itemize}
\end{theorem}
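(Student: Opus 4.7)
The plan is to adapt the classical Whitney covering construction to the orthonormal frame $(\nu_1,\dots,\nu_{N-1},\nu)$ in which all the cubes $Q_\nu$ are aligned, so that the entire argument reduces to the standard dyadic argument in rotated coordinates. First I would introduce, for each $k\in\mathbb Z$, the lattice $\mathcal D_k$ of closed cubes of the form $a+2^{-k}Q_\nu$ centered on the grid $2^{-k}(\mathbb Z\nu_1+\cdots+\mathbb Z\nu)$. The decisive structural property of $\mathcal D:=\bigcup_k \mathcal D_k$ is the dyadic dichotomy: any two of its members are either interior-disjoint or one is contained in the other; this is what will power the maximal-element selection below.

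Next, to each $x\in\mathbb R^N\setminus F$ I would attach a natural scale. Since $F$ is closed, $d(x):=\mathrm{dist}(x,F)>0$, so there is a unique $k(x)\in\mathbb Z$ with $2\sqrt N\,2^{-k(x)}<d(x)\le 2\sqrt N\,2^{-k(x)+1}$; let $Q(x)\in\mathcal D_{k(x)}$ be the (essentially unique) cube at that scale containing $x$. A routine triangle inequality estimate, based on $\mathrm{diam}\,Q(x)=\sqrt N\,2^{-k(x)}$, yields
\begin{equation*}
\mathrm{diam}\,Q(x)\ \le\ \mathrm{dist}(Q(x),F)\ \le\ 4\,\mathrm{diam}\,Q(x),
\end{equation*}
so the countable subfamily $\mathcal F_0:=\{Q(x):x\notin F\}\subset\mathcal D$ already satisfies iii), and its union covers $\mathbb R^N\setminus F$, tentatively giving i). Property ii) may however fail, since two different points can be assigned nested cubes at distinct scales.

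To remedy this, I would perform the standard maximality extraction, keeping in $\mathcal F$ only those cubes of $\mathcal F_0$ which are not strictly contained in another element of $\mathcal F_0$. The upper bound in iii) bounds the diameter of any $\mathcal F_0$-ancestor of $Q(x)$ by $4d(x)$, so maximal elements exist; the dyadic dichotomy then forces any two distinct maximal cubes to have disjoint interiors, yielding ii), while every $x\notin F$ remains covered by the maximal enlargement of its $Q(x)$, recovering i). Property iii) is inherited verbatim from $\mathcal F_0$, since enlarging a cube within $\mathcal F_0$ cannot violate the bound that held for the enlargement at its original assignment.

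I expect the main technical nuisance to be the bookkeeping that ensures simultaneous consistency of the three properties: specifically, fixing a deterministic rule for $Q(x)$ when $x$ lies on a face of the grid, verifying that $\mathcal F$ is countable (automatic from $\mathcal F\subset\mathcal D$), and checking that $\bigcup_{Q\in\mathcal F}Q$ does not intersect $F$, which follows from the lower bound in iii) since $\mathrm{dist}(Q,F)\ge\mathrm{diam}\,Q>0$ for every $Q\in\mathcal F$. None of these points is deep, but they must be stated with care to make the classical argument fully rigorous in the rotated frame.
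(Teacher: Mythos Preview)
Your argument is correct and is precisely the classical dyadic Whitney construction, transported to the rotated frame $(\nu_1,\dots,\nu_{N-1},\nu)$. The paper does not prove this theorem at all but simply cites \cite[Theorem~1, Chapter~4]{S}; your proposal therefore supplies exactly the proof the paper defers to that reference, and with the same strategy Stein uses.
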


The proof of the result below can be found in \cite[Lemma 3.1]{B}. With the aim of the applications below, we state it  as in \cite[Theorem 2.7]{BF}
\begin{proposition}
\label{Baldo}
Let $E$ be a subset of $\Omega$ such that ${\rm Per}(E;\Omega)<+\infty$. There exists a sequence of polyhedral sets $\{E_k\}$ (i.e. $E_k$ are bounded, strongly Lipschitz domains) with $\partial E_k= H_1\cup H_2\cup\dots \cup H_p$ where each $H_i$ is a closed subset of a hyperplane $\{x\in \mathbb R^N: x \cdot \nu_i=\alpha_i\}$ satisfying the following properties:
\begin{itemize}
\item[i)] ${\cal L}^N((E_k\cap \Omega)\setminus E) \cup(E\setminus (E_k\cap \Omega)))
\to 0 $ as $k \to +\infty$;
\item[ii)] ${\rm Per}(E_k;\Omega)\to {\rm Per}(E;\Omega)$ as $k \to +\infty$;
\item[iii)] ${\cal H}^{N-1}(\partial E_k\cap \partial \Omega)=0;$
\item[iv)]${\cal L}^N(E_k)={\cal L}^N(E)$.
\end{itemize}
\end{proposition}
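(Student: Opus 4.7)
The plan is to build the polyhedral approximations in two stages: first smooth $E$, then polygonalize the smooth boundary, and finally make small adjustments to enforce the normalizations (iii) and (iv).

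For the smoothing stage I would mollify the characteristic function. Fix a standard symmetric mollifier $\rho_\delta$ and set $u_\delta := \rho_\delta * \chi_E$, after extending $\chi_E$ by $0$ outside $\Omega$. By Sard's theorem applied to $u_\delta$ almost every level $t \in (1/4,3/4)$ is regular, so one can select $t_\delta$ such that $E_\delta:=\{u_\delta>t_\delta\}$ has $C^\infty$ boundary, $\chi_{E_\delta}\to \chi_E$ in $L^1(\Omega)$, and $\limsup_{\delta \to 0} {\rm Per}(E_\delta;\Omega)\leq {\rm Per}(E;\Omega)$ (a coarea/Fubini argument applied to $Du_\delta$ and the known fact that $|Du_\delta|(\Omega)\to |D\chi_E|(\Omega)$). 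The reverse inequality is free from the lower semicontinuity of perimeter under $L^1$ convergence, so strict convergence of perimeters holds along the sequence $E_\delta$.

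Next, for each $\delta$ I would polyhedralize $E_\delta$. Since $\partial E_\delta$ is a compact $C^1$ hypersurface in $\mathbb R^N$, cover it by finitely many small cubes $Q_i$ on which, in suitable orthonormal coordinates, $\partial E_\delta$ is the graph of a $C^1$ function $\varphi_i$ with small oscillation of $\nabla \varphi_i$. Replace $\varphi_i$ by a piecewise affine interpolant $\varphi_i^k$ on a fine triangulation of its base; the resulting set $E_\delta^k$ is polyhedral with faces on finitely many hyperplanes, $\chi_{E_\delta^k}\to \chi_{E_\delta}$ in $L^1(\Omega)$, and $\int \sqrt{1+|\nabla \varphi_i^k|^2}\,dy \to \int \sqrt{1+|\nabla \varphi_i|^2}\,dy$ as the mesh shrinks, giving convergence of perimeters. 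A diagonal extraction in $(\delta,k)$ yields a sequence $E_k$ fulfilling (i) and (ii).

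To enforce (iii) I would exploit the flexibility in the supporting hyperplanes $\{x \cdot \nu_i = \alpha_i\}$: since $\partial\Omega$ has finite $\mathcal H^{N-1}$ measure, for all but countably many $\alpha_i$ one has $\mathcal H^{N-1}(\{x\cdot\nu_i=\alpha_i\}\cap \partial\Omega)=0$, so a generic small perturbation of each $\alpha_i$ (along with, if needed, an infinitesimal rotation of $\nu_i$) produces a polyhedron with $\mathcal H^{N-1}(\partial E_k\cap \partial\Omega)=0$ without spoiling (i)--(ii). Finally, (iv) is volume bookkeeping: the constructed $E_k$ satisfy $\mathcal L^N(E_k)\to \mathcal L^N(E)$; by adding or removing a tiny polyhedral cube located away from both the bulk of $E$ and $\partial\Omega$, of measure exactly $\mathcal L^N(E)-\mathcal L^N(E_k)$, one enforces equality without affecting the asymptotics. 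The main obstacle I anticipate is controlling the perimeter in the polyhedralization step, namely guaranteeing that replacing the smooth graphs $\varphi_i$ by affine interpolants does not overshoot surface area in a way that survives the limit; this is where the $C^1$ regularity (hence uniform continuity of $\nabla \varphi_i$) is crucial, yielding $\nabla \varphi_i^k \to \nabla \varphi_i$ uniformly and hence the desired convergence of the area integrals.
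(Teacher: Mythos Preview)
The paper does not prove this proposition at all: immediately before stating it, the authors write that ``The proof of the result below can be found in \cite[Lemma 3.1]{B}'' and that they state it as in \cite[Theorem 2.7]{BF}. It is treated as a known approximation lemma from the literature (Baldo, and Barroso--Fonseca), so there is no in-paper proof to compare your attempt against.

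That said, your outline follows the standard two-stage strategy (mollify $+$ Sard to get smooth superlevel sets with strict perimeter convergence, then replace smooth graphs by piecewise-affine interpolants) and is essentially the route taken in the cited references. Two places deserve a bit more care if you want a self-contained proof. First, your justification of (iii) assumes $\mathcal H^{N-1}(\partial\Omega)<\infty$, which the paper does not impose on $\Omega$; a cleaner device is to translate the whole polyhedron $E_k$ by a small vector $h$ and use Fubini to see that $\mathcal H^{N-1}\big((\partial E_k + h)\cap\partial\Omega\big)=0$ for a.e.\ $h$, which works without any regularity of $\partial\Omega$. Second, in the volume correction (iv), adding or removing a small cube changes the perimeter by a fixed amount that does not vanish with $k$; you should instead attach (or carve out) a thin slab whose volume matches the defect while its surface area tends to $0$, or make the correction before the diagonal extraction so that the perimeter error is absorbed in the strict convergence. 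With these adjustments your sketch is a valid reconstruction of the cited result.
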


The following Lemma that will be exploited for the lower bound inequality in Theorem \ref{MainResultp} is very similar to \cite[Lemma 3.1]{FM2}. 
\begin{lemma}\label{3.1FMr}
Le $f: \mathbb{R}^d\times \mathbb{R}^m\times \mathbb{R}^{d\times N} \to [0,+\infty)$ be a continuous function such that
\begin{equation}
\label{growthp}
\displaystyle{0\leq f(u,b,\xi)\leq C(1+ |b|^p+ |\xi|),}
\end{equation}
for some $C>0$ and for $(x,u,b,\xi)\in \Omega \times \mathbb R^d \times \mathbb R^m \times \mathbb R^{d \times N}$.
Let 
$
u_0(x):=\left\{
\begin{array}{lc}
c \hbox{ if } x_N >0\\
d \hbox{ if }  x_N \leq 0
\end{array}
\right.$, $\{w_n\} \subset W^{1,1}(Q;\mathbb R^d)$ and $\{v_n\} \subset L^p(Q;\mathbb R^m)$ be such that $w_n \to u_0$ in $L^1(Q;\mathbb R^d)$ and $v_n \rightharpoonup v$ in $L^p(Q;\mathbb R^m)$, with $v \in L^\infty(Q;\mathbb R^m)$ and $\int_Q  v dx = b$. If $\varrho$ is a mollifier, $\varrho_n(x):=(\tfrac{1}{\varepsilon_n})^N\varrho(\tfrac{x}{\varepsilon_n})$, and $\{\varepsilon_n\}$ is a sequence of real numbers such that $\varepsilon_n \to 0^+$, then there exist two sequences of functions $\{\xi_n\} \subset {\cal A}(c,d, \nu)$  and $\{\bar{v}_n\}\subset L^p(Q;\mathbb R^m)$  such that
\begin{align}
\xi_n= \varrho_{i(n)}\ast u_0 \hbox{ on } \partial Q, \;\;\;
\xi_n\to u_0 \hbox{ in } L^1(Q;\mathbb R^d), \;\;\;
\overline{v}_n\rightharpoonup v \hbox{ in }L^p(Q;\mathbb R^m), \notag \\
\int_Q \overline{v}_n dx = b, \;
\liminf_{n \to +\infty} \int_Q f ( w_n, v_n,\nabla w_n)dx\geq \limsup_{n\to +\infty}\int_Q f( \xi_n, {\bar v}_n, \nabla \xi_n)dx. \label{lemma2.8results}
\end{align}
\end{lemma}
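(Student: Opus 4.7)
The plan is a De Giorgi--type slicing/interpolation argument in the spirit of \cite{FM2}. First I would, up to extracting a subsequence, reduce to the case $\ell := \lim_n \int_Q f(w_n,v_n,\nabla w_n)\,dy < +\infty$ (otherwise the claim is trivial). Fix $\delta \in (0,1/4)$ and $M \in \mathbb{N}$, and decompose the outer shell $Q \setminus (1-\delta)Q$ into $M$ concentric sub-shells $S_1,\dots,S_M$ of thickness $\delta/M$. For each $n$, choose $i(n)$ so that $\varepsilon_{i(n)} \ll \delta/M$ and $\|w_n - \varrho_{i(n)}*u_0\|_{L^1(Q)} = o(\delta/M)$, which is possible because both $w_n$ and $\varrho_j * u_0$ converge to $u_0$ in $L^1(Q;\mathbb{R}^d)$. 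Applying the pigeonhole principle to the quantities $\int_{S_j}\bigl(f(w_n,v_n,\nabla w_n) + (M/\delta)|w_n - \varrho_{i(n)}*u_0|\bigr)\,dy$, I would select a layer $S_{j_n}$ on which this integral is at most $1/M$ times its sum over the full outer shell.

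I would then take a Lipschitz cut-off $\theta_n$ equal to $1$ on the inner side of $S_{j_n}$, $0$ on its outer side, affine across $S_{j_n}$, with $|\nabla \theta_n| \leq CM/\delta$, and set
$$
\xi_n := \theta_n\, w_n + (1-\theta_n)\,\varrho_{i(n)}*u_0.
$$
Because $\varrho_{i(n)}*u_0$ depends only on $y \cdot \nu$, equals $c$ on $\{y\cdot\nu=1/2\}$ and $d$ on $\{y\cdot\nu=-1/2\}$ (as $\varepsilon_{i(n)}<1/2$), and is $1$-periodic in the $\nu_1,\dots,\nu_{N-1}$ directions, and since $\xi_n$ coincides with $\varrho_{i(n)}*u_0$ in a neighborhood of $\partial Q$, both $\xi_n \in \mathcal{A}(c,d,\nu)$ and $\xi_n = \varrho_{i(n)}*u_0$ on $\partial Q$ follow automatically, together with $\xi_n \to u_0$ in $L^1(Q;\mathbb{R}^d)$. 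For the $L^p$-component I would set $\bar v_n$ equal to $v_n$ on $\{\theta_n=1\}$ and modify it on the thin complementary region (of measure $O(\delta)$) to equal $v$ plus a small constant chosen to enforce $\int_Q \bar v_n\,dy = b$; the hypotheses $v \in L^\infty$, $v_n \rightharpoonup v$ in $L^p$ and $\int_Q v_n \to b$ ensure that along a diagonal extraction the correcting constant tends to $0$ and $\bar v_n \rightharpoonup v$ in $L^p(Q;\mathbb{R}^m)$.

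The energy estimate then splits $\int_Q f(\xi_n,\bar v_n,\nabla\xi_n)\,dy$ into contributions from $\{\theta_n=1\}$, $S_{j_n}$, and $\{\theta_n=0\}$. On $\{\theta_n=1\}$ the integrand coincides with $f(w_n,v_n,\nabla w_n)$, yielding a bound $\int_Q f(w_n,v_n,\nabla w_n)\,dy$. On $\{\theta_n=0\}$ the growth \eqref{growthp}, the fact that $|\{\theta_n=0\}| = O(\delta)$, the $L^\infty$ bound on $\bar v_n$ there, and the estimate $\int_{\{\theta_n=0\}}|\nabla \varrho_{i(n)}*u_0|\,dy = O(\delta)$ (obtained because $\nabla \varrho_{i(n)}*u_0$ is concentrated on a transverse strip of width $\varepsilon_{i(n)}$ whose intersection with the lateral portion of the outer shell has $(N-1)$-measure $O(\delta)$) together give an $O(\delta)$ contribution. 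On $S_{j_n}$, writing $|\nabla \xi_n| \leq |\nabla w_n| + |\nabla \varrho_{i(n)}*u_0| + (CM/\delta)|w_n - \varrho_{i(n)}*u_0|$ and invoking \eqref{growthp}, the pigeonhole choice of $j_n$, and the choice of $i(n)$, the contribution is bounded by $C\ell/M + O(\delta/M) + o(1)_{n \to \infty}$. Taking $\limsup_n$, then $M \to \infty$ and $\delta \to 0$, and extracting a standard diagonal subsequence, \eqref{lemma2.8results} follows.

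The main obstacle is the coupled diagonalization of the four parameters $n, M, \delta, i(n)$: the transition-layer interpolation produces a factor $M/\delta$ that must be overcome by selecting $i(n)$ so that $\|w_n - \varrho_{i(n)}*u_0\|_{L^1(Q)}$ beats $\delta/M$ (which forces $n \to \infty$ to be taken last); simultaneously the outer-layer contribution of $\nabla \varrho_{i(n)}*u_0$, whose total $L^1$-mass over $Q$ is only bounded by the constant $|c-d|$, must be reduced to $O(\delta)$ by exploiting that the mollified jump is concentrated on a thin strip transverse to $\nu$; and all of this must be consistent with the constraint that $\bar v_n \rightharpoonup v$ in $L^p$ along the diagonal, which is achieved thanks to the bulk identity $\bar v_n = v_n$ on $\{\theta_n=1\}$ and the smallness of the complementary correction.
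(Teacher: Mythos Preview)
Your approach is essentially the same De Giorgi slicing argument as the paper's: interpolate between $w_n$ and the mollified jump $\varrho_{i(n)}*u_0$ across a thin layer near $\partial Q$ chosen by pigeonhole, and modify $v_n$ on the outer region to restore the average $b$. The organization differs only cosmetically (you carry two parameters $\delta$ and $M$ and diagonalize at the end, while the paper runs a single index $j$ with a nested subsequence extraction), and your choice $\bar v_n = v + c_n$ on the outer region is a harmless variant of the paper's choice of a pure constant there.

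One small slip to correct: your pigeonhole is taken on $\int_{S_j}\bigl(f(w_n,v_n,\nabla w_n)+(M/\delta)|w_n-\varrho_{i(n)}*u_0|\bigr)$, but in the transition-layer estimate you actually need control of $\int_{S_{j_n}}|\nabla w_n|$, and \eqref{growthp} is only an \emph{upper} bound on $f$, so a small $\int_{S_{j_n}} f$ does not by itself bound $\int_{S_{j_n}}|\nabla w_n|$. The fix is to pigeonhole on $|\nabla w_n|+|v_n|^p$ (plus the $(M/\delta)|w_n-\varrho_{i(n)}*u_0|$ term) instead; this is exactly what the paper does, tacitly using that $\sup_n\|\nabla w_n\|_{L^1(Q)}<\infty$ and $\sup_n\|v_n\|_{L^p(Q)}<\infty$. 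The $L^p$ bound is automatic from $v_n\rightharpoonup v$, while the $W^{1,1}$ bound is not a consequence of the stated hypotheses of the lemma but holds in every application (it follows from the coercivity in $(H_1)_p$ once $\ell<\infty$).
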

\begin{proof}
Without loss of generality, assume that
\begin{equation*}
\liminf_{n \to +\infty} \int_Q f ( w_n, v_n,\nabla w_n)dx=\lim_{n\to+\infty} \int_Q f ( w_n, v_n,\nabla w_n)dx<+\infty.
\end{equation*}
Define $z_n(x):=(\varrho_n \ast u_0)(x)=\int_{B\left(x,\varepsilon_n\right)}\varrho_n(x-y)u_0(y)dy.$
Since $\varrho$ is a mollifier, we have $z_n(x+ e_i)=z_n(x)$ for every $i=1,\dots, N-1$,
$$
z_n=\left\{\begin{array}{ll} c &\hbox{ if }x_N >\varepsilon_n,\\
d &\hbox{ if }x_N \leq -\varepsilon_n, 
\end{array}
\right. \;\;\; \|\nabla z_n\|_{L^\infty(Q)}= O\left(\varepsilon_n\right), \;\;\; z_n \in {\cal A}(c,d, e_N).$$

For $j \in \mathbb N$, define $L_j:=\left\{x \in Q: {\rm dist}(x;\partial Q)<\tfrac{1}{j}\right\}.$
Take $j=2$, and divide $L_2$ into two layers $S^1_{2}, S^2_{2}$. It is clear that for every $n \in \mathbb N,$ there exists $S \in \{S^1_{2}, S^2_{2}\}$ such that
$\int_S (|\nabla w_n|+ |v_n|^p)dx \leq \frac{C}{2}$, where $C$ is the constant which uniformly bounds $\int_Q|\nabla w_n|dx $ and $\int_Q|v_n|^p dx$ in $Q$, since $w_n \to u_0$ in $L^1(Q;\mathbb R^d)$ and $v_n \rightharpoonup v$ in $L^p(Q;\mathbb R^m)$.
Since there are only two layers and infinitely many indices, we can conclude that one of the two layers, defined as $S_2:= \{x \in Q: \alpha_2 < {\rm dist}(x, \partial Q)< \beta_2\}$, for $0<\alpha_2 <\beta_2 <1$ satisfies
$$
\int_{S_2} |\nabla w_{n_2}|+ |v_{n_2}|^p dx \leq \frac{C}{2},
$$
for a subsequence $\{n_2\}$ of $\{n\}$.
Let $\eta_2$ be a smooth cut-off function $0\leq \eta_2\leq 1$, such that 
$\eta_2= 1$ in the complement of $\{x \in Q: {\rm dist}(x, \partial Q)< \beta_2\} $ and $\eta_2=0$ in $\{x \in Q: {\rm dist}(x, \partial Q)< \alpha_2\}$, and $\|\nabla(\eta_2)\|_{L^\infty}= O(\frac{1}{|S_2|})$. 
Clearly,
$$
\lim_{n_2 \to +\infty}\int_Q \eta_2 v_{n_2}dx= \int_Q \eta_2 v\,dx,
$$
since $\eta_2 \in L^\infty(Q;\mathbb R^m)$ and $v_{n_2} \rightharpoonup v$ in $L^p(Q;\mathbb{R}^m)$.
Also, for the same sequence $\{n_2\}$, we have
$$
\displaystyle{\lim_{n_2 \to +\infty} \left|\int_Q (v-\eta_2 v_{n_2})dx\right|=\left|\int_Q (1-\eta_2)v\,dx\right|\leq \|v\|_{L^\infty}\left|\int_Q (1-\eta_2) \,dx \right|.}
$$
Moreover, we can find a number $n(2)\in \{n_2\}$ large enough so that
$$
\frac{1}{|S_2|}\int_{S_2} |w_{n(2)}-z_{n(2)}|dx <\frac{1}{2},\,\,\,\,
\displaystyle{\frac{|\int_Q (v-\eta_2 v_{n(2)})dx|}{\left|1- \int_Q \eta_2 dx \right|} < \|v\|_{L^\infty} +1.}
$$
Next we divide $L_3$ into three layers $S^1_{3}, S^2_{3}, S^3_{3}$.
For each $n_2$ there exists $S\in \{S^1_{3}, S^2_{3}, S^3_{3}\}$, 
such that $\int_S |\nabla w_{n_2}|+ |v_{n_2}|^p dx \leq \frac{C}{3}.$  

Since there are only three layers with infinitely many indices, we conclude that one of the layers $S_3 \in \{S^1_{3}, S^2_{3}, S^3_{3}\}$
satisfies 
$$
\displaystyle{\int_{S_3}|\nabla w_{n_3}|+ |v_{n_3}|^p dx \leq \frac{C}{3},}
$$
for a subsequence $\{n_3\}$ of $\{n_2\}$.
Let $\eta_3$ be a smooth cut off function, $0 \leq \eta_3 <1$ , $\eta_3=1$ in the complement of $\{x \in Q: {\rm dist}(x, \partial Q)< \beta_3\}$ and $\eta_3=0$ in $\{x \in Q: {\rm dist}(x, \partial Q)< \alpha_3\}$, and $\|\nabla \eta_3\|_{L^\infty}= O\left(\frac{1}{|S_3|}\right)$, and
$$
\displaystyle{\lim_{n_3 \to +\infty} \left|\int_Q (v-\eta_3 v_{n_3})dx\right|=\left|\int_Q (1-\eta_3)v\,dx\right|\leq \|v\|_{L^\infty}\left|\int_Q (1-\eta_3) \,dx \right|.}
$$
The convergence of $w_{n_3} \to u_0$ in $L^1$, allows us to choose
$n(3)\in \{n_3\}$, $n(3)>n(2)$ large enough, such that
$$
\frac{1}{|S_3|}\int_{S_3}|w_{n(3)}- z_{n(3)}|dx \leq \frac{1}{3}
\hbox{ and } 
\displaystyle{\frac{|\int_Q (v-\eta_3 v_{n(3)})dx|}{\left|1- \int_Q \eta_3 dx \right|} < \|v\|_{L^\infty} +1.}
$$
Precisely, in this way, we construct the sequence $n(j)$ such that 
\begin{equation}\label{boundvj}
\displaystyle{\int_{S_j}|\nabla w_{n(j)}|+ |v_{n(j)}|^p dx \leq\tfrac{C}{j},}\,
\frac{1}{|S_j|}\int_{S_j}|w_{n(j)}- z_{n(j)}|dx \leq \tfrac{1}{j},
\hbox{ and }
\displaystyle{\frac{|\int_Q (v-\eta_j v_{n(j)})dx|}{\left|1- \int_Q \eta_j dx \right|} < \|v\|_{L^\infty} +1.}
\end{equation}
Let us define $\overline{w}_j(x):=(1-\eta_j(x))z_{n(j)}(x)+ \eta_j(x)w_{n(j)}(x),$ and 
$$
\overline{v}_j(x):=(1-\eta_j(x)) \frac{\int_{Q} (v-\eta_j v_{n(j)})dx}{1-\int_{Q} \eta_j dx}+ \eta_j(x)v_{n(j)}(x).
$$
Then
$$
\frac{1}{|Q|}\int_Q \overline{v}_jdx =b,\;\;
\|\overline{v}_j\|_{L^p(Q)}\leq C,
\hbox{ and }
\overline{w}_j\lfloor_{\partial Q}=\overline{w}_j\lfloor_{\partial Q}= u_0.
$$
In particular, $\overline{w}_j \to u_0 $ in $L^1(Q;\mathbb R^d)$ and 
$ \overline{v}_j \rightharpoonup b$ in $L^p(Q;\mathbb R^m)$.
The first convergence is trivial, the second one can be proven first observing that
it is enough to consider test functions $\varphi \in C_0(Q)$. Then the bounds in $\eqref{boundvj}$ entail that 
$$
\displaystyle{\lim_{j \to +\infty}\int_Q ((1-\eta_j) \tfrac{\int_{Q} (v-\eta_j v_{n(j)})dx}{1-\int_{Q} \eta_j dx}+ \eta_jv_{n(j)}- b) \varphi dx	=\lim_{j \to +\infty} \int_Q (\eta_j v_{n(j)}- v)\varphi dx.}
$$
Then
$$
\displaystyle{\lim_{j \to +\infty}\int_Q (\eta_j v_{n(j)}- v)\varphi dx= \lim_{j \to +\infty}\int_Q (v_{n(j)}- v)\varphi dx + \lim_{j \to +\infty} \int_Q (-1+\eta_j)v_{n(j)}\varphi dx=0,}
$$
The first limit  in the right hand side is $0$ since $v_{n(j)}\rightharpoonup v$ in $L^p(Q;\mathbb R^m)$ and the second is $0$ since $v_{n(j)}$ is $s$-equi-integrable for every $1\leq s<p$ and $\eta_j \to 1$. Hence we have
\begin{align*}
\lim_{n \to +\infty}\int_Q f(w_n, v_n, \nabla w_n)dx&=\lim_{j \to +\infty}\int_Q f(w_j, v_j, \nabla w_j)dx\\
&\geq\lim_{j \to +\infty}\int_Q f(\overline{w}_j, \overline{v}_j, \nabla \overline{w}_j)dx-\limsup_{j \to +\infty}\int_{\{x\in Q: {\rm dist}(x, \partial Q)<\alpha_j\}} f(\overline{w}_j, \overline{v}_j, \nabla \overline{w}_j)dx\\
&-\limsup_{j \to +\infty}\int_{S_j} f(\overline{w}_j, \overline{v}_j, \nabla \overline{w}_j)dx.
\end{align*}
Thus it results
\begin{align*}
&\lim_{n \to +\infty}\int_Q f(w_n, v_n, \nabla w_n)dx\geq\lim_{j \to +\infty}\int_Q f(\overline{w}_j, \overline{v}_j, \nabla \overline{w}_j)dx-\limsup_{j\to +\infty} \int_{L_j}C(1+ \|v\|_{L^\infty})^pdx\\
&-\limsup_{j \to +\infty}\int_{S_j} (1+|\nabla z_{n(j)}| + |\nabla  w_{n(j)}| + |v_{n(j)}|^p + \|v\|_{L^\infty}^p +\frac{1}{|S_j|} |w_{n(j)}- z_{n(j)}|)dx,
\end{align*}
where we have used the fact that $\nabla z_{n(j)}=0$ in $L_j$. 
Observing that, using co-area formula, $\int_{S_j}|\nabla z_{n(j)}|dx \to 0$ as $j \to +\infty$, we obtain the desired result.\end{proof}

\begin{remark}
\label{3.17FMrrem}
\begin{itemize}
\item[i)] For every $v \in L^p(\Omega;\mathbb R^m)$, under the same assumptions of Lemma \ref{3.1FMr} we can prove \eqref{lemma2.8results} without keeping the average.
\item[ii)]We observe that the same type of arguments can be exploited to prove a similar result for the $BV\times L^\infty$ case. Namely, if $f: \mathbb{R}^d\times \mathbb{R}^m\times \mathbb{R}^{d\times N} \to [0,+\infty)$ is a continuous function such that for every $b \in \mathbb R^m,$ with $|b|\leq M$ there exists a constant $C_M$ for which $\displaystyle{0\leq f(u,b,\xi)\leq C_M(1+ |\xi|),}$
for $(u,b,\xi)\in  \mathbb R^d \times\mathbb R^m \times \mathbb R^{d \times N},$
then \eqref{lemma2.8results} holds considering the sequence $\{v_n\}\subset L^\infty(Q;\mathbb R^m)$ and finding a correspondent sequence $\{\overline{v}_n\}\subset L^\infty(Q;\mathbb R^m)$ such that
$\overline{v}_n\overset{\ast}{\rightharpoonup} v \hbox{ in }L^\infty(Q;\mathbb R^m),$ and $\int_Q \overline{v}_n dx= b$ The main differences in the proof are the use of the above growth condition in place of \eqref{growthp}, and the fact  that $\{\overline{v}_j\}$ and $\{v_{n(j)}\}$ are uniformly bounded in $L^\infty$. 
\end{itemize}
\end{remark}
Next we recall the definition of Yosida transform that it will be useful in the proof of the upper bound.

\begin{definition}\label{defYosidap} For any function $f:\Omega\times\mathbb{R}^d\times\mathbb{R}^m\times\mathbb{R}^{d\times N}\longrightarrow\mathbb{R}$ 
let, for any $\lambda>0$, the \emph{Yosida transform} of $f$ be defined as 
$$f_\lambda(x,u,b,\xi):=\sup_{(x',u')\in\Omega\times\mathbb R ^d}\left\{f(x',u',b,\xi)-\lambda C(|x-x'|+|u-u'|)(1+|b|+|\xi|)\right\},$$ for any $(x,u,b,\xi)\in\Omega\times\mathbb{R}^d\times \mathbb R^m\times\mathbb{R}^{d\times N}$.
\end{definition}
The proof of next proposition follows along the lines \cite[Proposition 4.6]{AMT}

\begin{proposition}\label{propYosida}.
Let $f:\Omega\times\mathbb R ^d\times\mathbb{R}^m\times\mathbb{R}^{d\times N}\longrightarrow\mathbb{R}$ be 
such that $f(\cdot,\cdot,b,\xi)$ is continuous for any $(b,\xi)\in\mathbb{R}^m \times \mathbb{R}^{d\times N}$. Then the \emph{Yosida transform} of $f$ satisfies the following properties:

\begin{itemize}
\item[i)] $f_\lambda(x,u,b,\xi)\geq f(x,u,b,\xi)$ and $f_\lambda (x,u,b,\xi)$ decreases to $f(x,u,b,\xi)$ as $\lambda \to + \infty$.
\item[ii)] $f_\lambda(x,u,b,\xi) \geq f_\eta(x,u,b,\xi)$ if $\lambda \leq \eta$ for every $(x,u,b,\xi)\in \Omega \times \mathbb R^d \times \mathbb R^m \times \mathbb R^{d \times N}$.
\item[iii)] $|f_\lambda (x,u,b,\xi)-f_\lambda(x',u',b,\xi)| \leq \lambda (|x-x'|+|u-u'|)(1+ |\xi| + |b|)$ for every $(x,u,b,\xi), $ $(x',u',b,\xi)\in \Omega \times \mathbb R^d \times \mathbb R^m \times \mathbb R^{d \times N}$.
\item[iv)] The approximation is uniform on compact sets. Precisely, let $K$ be a compact subset of $\Omega \times \mathbb R^d$ and let $\delta>0$. There exists $\lambda>0 $ such that

$\displaystyle{f(x,u,b,\xi)\leq f_\lambda(x,u,b,\xi)\leq f(x,u,b,\xi) + \delta (1+ |b|+ |\xi|)},$
 for every $(x, u, b, \xi) \in K \times \mathbb R^m \times \mathbb R^{d \times N}$.
\end{itemize}
\end{proposition}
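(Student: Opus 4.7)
The plan is to verify each of the four assertions directly from the definition of the Yosida transform, using the elementary observation that $f_\lambda(x,u,b,\xi)$ is the supremum, over $(x',u')\in\Omega\times\mathbb R^d$, of quantities that depend linearly on $(1+|b|+|\xi|)$ through the penalty term.

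For (i), the lower bound $f_\lambda\ge f$ follows by choosing $(x',u')=(x,u)$ in the supremum, making the penalty vanish. The monotonicity in $\lambda$ (which is (ii)) is immediate: a larger $\lambda$ subtracts a larger nonnegative quantity, so the supremum can only decrease, and the pointwise limit $\lim_{\lambda\to\infty}f_\lambda(x,u,b,\xi)$ exists. To identify this limit with $f$, for each $\lambda$ I would select $(x_\lambda',u_\lambda')$ realizing the supremum up to an error $1/\lambda$; combining this with $f_\lambda\ge f$ yields
\begin{equation*}
\lambda C(|x-x_\lambda'|+|u-u_\lambda'|)(1+|b|+|\xi|)\le f(x_\lambda',u_\lambda',b,\xi)-f(x,u,b,\xi)+\tfrac{1}{\lambda}.
\end{equation*}
Since $f(\cdot,\cdot,b,\xi)$ is continuous (and, under the standing hypothesis of the paper, bounded in $(x,u)$ at fixed $(b,\xi)$), the right-hand side is bounded, forcing $|x-x_\lambda'|+|u-u_\lambda'|\to 0$ as $\lambda\to\infty$. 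Continuity of $f(\cdot,\cdot,b,\xi)$ at $(x,u)$ then yields $f_\lambda(x,u,b,\xi)\to f(x,u,b,\xi)$.

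Assertion (ii) is a direct comparison of the integrands defining the two suprema. For (iii), given $\varepsilon>0$, I would pick $(\tilde x,\tilde u)$ almost realizing $f_\lambda(x',u',b,\xi)$, i.e.
\begin{equation*}
f_\lambda(x',u',b,\xi)\le f(\tilde x,\tilde u,b,\xi)-\lambda C(|x'-\tilde x|+|u'-\tilde u|)(1+|b|+|\xi|)+\varepsilon,
\end{equation*}
and compare with the trivial bound $f_\lambda(x,u,b,\xi)\ge f(\tilde x,\tilde u,b,\xi)-\lambda C(|x-\tilde x|+|u-\tilde u|)(1+|b|+|\xi|)$. Subtracting, invoking the triangle inequality in $(|x-\tilde x|-|x'-\tilde x|)$ and $(|u-\tilde u|-|u'-\tilde u|)$, and symmetrizing by swapping the roles of $(x,u)$ and $(x',u')$, I get the claimed Lipschitz estimate after sending $\varepsilon\to 0$.

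The main obstacle is (iv), because the modulus of continuity of $f$ available through $(H_2)_p$ comes multiplied by $1+|b|^p+|\xi|$ while the penalty in the Yosida transform scales only with $1+|b|+|\xi|$. My plan is to argue by contradiction: if the uniform bound failed on $K\times\mathbb R^m\times\mathbb R^{d\times N}$, there would exist sequences $\lambda_n\to\infty$, $(x_n,u_n)\in K$, $(b_n,\xi_n)\in\mathbb R^m\times\mathbb R^{d\times N}$, and near-optimal $(x_n',u_n')$ with $f(x_n',u_n',b_n,\xi_n)-f(x_n,u_n,b_n,\xi_n)\ge\delta(1+|b_n|+|\xi_n|)+\lambda_n C(|x_n-x_n'|+|u_n-u_n'|)(1+|b_n|+|\xi_n|)-o(1)$. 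The penalty term, combined with the growth bound $f(x_n',u_n',b_n,\xi_n)\le C(1+|b_n|^p+|\xi_n|)$ from $(H_1)_p$, forces $|x_n-x_n'|+|u_n-u_n'|\to 0$; moreover $(x_n,u_n)$ may be taken to converge in $K$ by compactness, so $(x_n',u_n')$ remains in a fixed compact neighborhood of $K$. Then the modulus from $(H_2)_p$ controls the left-hand side by $\omega_{K'}(|x_n-x_n'|+|u_n-u_n'|)(1+|b_n|^p+|\xi_n|)$, which after dividing by $(1+|b_n|+|\xi_n|)$ and using $\lambda_n\to\infty$ contradicts the lower bound $\delta>0$ (one uses that $\omega_{K'}(|x_n-x_n'|+|u_n-u_n'|)\to 0$ at a rate dictated by $1/\lambda_n$ times a factor absorbed by the penalty). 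This delicate balancing between the polynomial growth of $f$ in $(b,\xi)$ and the linear penalty is the crux; everything else is bookkeeping.
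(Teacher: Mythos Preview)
Your arguments for (i)--(iii) are correct and standard; the paper itself gives no details and merely cites \cite[Proposition~4.6]{AMT}, whose proof is essentially the one you outline. The genuine gap is in (iv). From $(H_1)_p$ your contradiction yields only
\[
r_n:=|x_n-x_n'|+|u_n-u_n'|\le \frac{C'}{\lambda_n}\cdot\frac{1+|b_n|^p+|\xi_n|}{1+|b_n|+|\xi_n|},
\]
which does \emph{not} force $r_n\to 0$ (nor $(x_n',u_n')$ into a fixed compact set) when $|b_n|^{p-1}$ grows like $\lambda_n$. Even granting compactness, after applying $(H_2)_p$ and dividing by $1+|b_n|+|\xi_n|$ you are left with $\omega_{K'}(r_n)\,(1+|b_n|^{p-1})\ge\delta$, which is no contradiction if $|b_n|\to\infty$. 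The vague clause ``at a rate dictated by $1/\lambda_n$ times a factor absorbed by the penalty'' does not supply the missing mechanism. In fact the obstruction is genuine: take $f(x,u,b,\xi)=(1+g(u))|b|^p+|\xi|$ with $g$ continuous, $0\le g\le 1$, $g(0)=0$, $g(1)=1$. This satisfies $(H_0)$--$(H_3)_p$, yet at $u=0,\ \xi=0$, choosing the competitor $u'=1$ gives $f_\lambda(x,0,b,0)-f(x,0,b,0)\ge |b|^p-\lambda C(1+|b|)$, which for every fixed $\lambda$ exceeds $\delta(1+|b|)$ once $|b|$ is large. So (iv), read with the stated hypothesis and the penalty weight $(1+|b|+|\xi|)$, actually fails.

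The resolution is that the paper only invokes Proposition~\ref{propYosida} in the Cantor upper bound, where $v\in L^\infty(\Omega;\mathbb R^m)$ and hence $|b|\le M$. On $\{|b|\le M\}$ the weights $1+|b|+|\xi|$ and $1+|b|^p+|\xi|$ are comparable, and then either your contradiction argument or a direct one (pick $r_0$ with $\omega_{K'}(r)<\delta$ for $r<r_0$, then choose $\lambda$ so large that competitors with $|x-x'|+|u-u'|\ge r_0$ are killed by the penalty via the uniform bound $f\le C_M(1+|\xi|)$) gives (iv) in the only form actually needed. Alternatively one can redefine the Yosida penalty with weight $(1+|b|^p+|\xi|)$, after which your argument for (iv) goes through for all $(b,\xi)$.
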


\section{Properties of the energy densities}
\label{propenerdens}

\subsection{Convex-quasiconvex functions}\label{removeCqf}

We start by recalling the notion of convex-quasiconvex function, presented
in \cite{FKP1} (see also \cite{LDR}, \cite{FKP2} and \cite{FFL}).

\begin{definition}
A Borel measurable function $h:\mathbb{R}^{m}\times\mathbb{R}^{d\times
N}\rightarrow\mathbb{R}$ is said to be convex-quasiconvex if, for every $%
(b,\xi)\in\mathbb{R}^{m}\times\mathbb{R}^{d\times N}$, there exists a
bounded open set $D$ of $\mathbb{R}^{N}$ such that 
\begin{equation*}
h(b,\xi)\leq\frac{1}{|D|}\int_{D}h(b+\eta(x),\xi+\nabla\varphi(x))\,dx, 
\end{equation*}
for every $\eta\in L^{\infty}(D;\mathbb{R}^{m})$, with ${\int_{D}\eta(x)%
\,dx=0}$, and for every $\varphi\in W_{0}^{1,\infty}(D;\mathbb R ^d )$.
\end{definition}

\begin{remark}\label{remplip}

\begin{enumerate}
\item[i)] If $h$ is convex-quasiconvex then
the inequality above is true for any bounded open set $%
D\subset\mathbb{R}^{N}.$
\item[ii)] A convex-quasiconvex function is separately convex.
\item[iii)] Throughout this paper we will work with functions $f$ defined in $%
\Omega\times\mathbb{R}^d\times\mathbb{R}^{m}\times\mathbb{R}^{d\times N}$
and when saying that $f$ is convex-quasiconvex, we consider the previous
definition with respect to the last two variables of $f.$
\item[iv)] If $f$ satisfies $(H_{1})_{p}$, Proposition 2.11 ii) in \cite{CRZ1} entails that $f$ is $(p,1)-$ \emph{Lipschitz continuous}, namely
there exists $\gamma>0$ such that 
\begin{equation}
\vert f(x,u,b,\xi) -f( x,u,b^{\prime},\xi^{\prime
}) \vert \leq\gamma(\vert \xi-\xi^{\prime}\vert
+( 1+\vert b\vert ^{p-1}+\vert b^{\prime}\vert
^{p-1}+\vert \xi\vert ^{\frac{1}{p^{\prime}}}+\vert
\xi^{\prime}\vert ^{\frac{1}{p^{\prime}}}) \vert
b-b^{\prime}\vert)   \label{p-lipschitzcontinuity}
\end{equation}
for every $b,~b^{\prime}\in\mathbb{R}^{m},$ $\xi,~\xi^{\prime}\in \mathbb{R}^{d\times N}$ and $(x,u) \in\Omega\times\mathbb R ^d $, where $p'$ is the conjugate exponent of $p$.
\item[v)] If $f$ satisfies
 $(H_1)_{\infty}$, \cite[Proposition 4]{RZ} guarantees that $f$ is $(\infty,1)-$\emph{Lipschitz continuous}, i.e. given $M>0$ there exists a constant $\beta(M)>0$ such that
\begin{equation}
\vert f( x,u,b,\xi) -f( x,u,b^{\prime},\xi^{\prime
}) \vert \leq\beta( 1+\vert \xi\vert +\vert
\xi^{\prime}\vert) \vert b-b^{\prime}\vert
+\beta\vert \xi-\xi^{\prime}\vert 
\label{infty-lipschitzcontinuity}
\end{equation}
for every $b,~b^{\prime}\in\mathbb{R}^{m},$ such that $\vert
b\vert \leq M$ and $\vert b^{\prime}\vert \leq M,$ for
every $\xi ,~\xi^{\prime}\in\mathbb{R}^{d\times N}$ and for every $(
x,u) \in\Omega\times\mathbb R ^d.$
\end{enumerate}
\end{remark}

\subsection{The recession functions}\label{recessionfunctions}

Let $f:\Omega \times \mathbb R ^d \times \mathbb{R}^m \times \mathbb{R}^{d
\times N}\to [0, +\infty[$, and let $f_p^\infty: \Omega \times \mathbb R ^d
\times \mathbb{R}^m \times \mathbb R ^{d \times N}\to [0, +\infty[$, be its $(p,1)-$ recession function, defined in \eqref{recessionp}. We observe that $f^\infty_p$ satisfies the following homogeneity property,
\begin{equation}
\label{p-1homogeneity}
\displaystyle{f^\infty_p(x,u,t^\frac{1}{p}b, t\xi)= t f^\infty_p(x,u, b,\xi) \; \hbox{ for every }t \in \mathbb R^+, x \in \Omega, u \in \mathbb R^d, b \in \mathbb R^m, \xi \in \mathbb R^{d \times N}.}
\end{equation}

Notice that, under growth condition $(H_1)_p$ on $f$, we could consider both $f^\infty_p$ and $f^\infty$ (i.e. $(\infty,1)-$ recession function of $f$ as in \eqref{recession}), and the latter one turns out to be independent on $b$, i.e. $f^\infty(x,u,b,\xi)=f^\infty(x,u,0,\xi)$ for every $(x,u,b,\xi)\in \Omega\times \mathbb R^d\times \mathbb R^m \times \mathbb R^{d\times N}$, provided $f$ is separately convex. 
Moreover, it results that in general
$f^\infty_p(x,u,b,\xi)\not= f^\infty(x,u,b,\xi)$ but the equality holds if $b=0$.

The following properties are an easy consequence of the definition of
$(p,1)-$ recession function and of properties $(H_{0})$, $(H_{1})_{p},~(H_{2})_{p}$,
when $1<p<\infty. $ 

\begin{proposition}
\label{proprecession} Let $f:\Omega \times \mathbb{R}%
^d\times \mathbb{R}^{m}\times \mathbb{R}^{d\times N}\rightarrow \lbrack
0,+\infty \lbrack $, and let $f^\infty_p$ defined by \eqref{recessionp}, provided $f$ satisfies $(H_{0})$, $%
(H_{1})_{p},~(H_{2})_{p}$, then

\begin{enumerate}
\item[i)] $f_p^{\infty }$ is convex-quasiconvex;

\item[ii)] there exists $C>0$ such that%
\begin{equation}
\tfrac{1}{C}( \vert b\vert ^{p}+\vert \xi \vert
) \leq f_p^{\infty }( x,u,b,\xi) \leq C(\vert
b\vert ^{p}+\vert \xi \vert);  \label{finftypgrowth}
\end{equation}

\item[iii)] for every $K\subset \subset \Omega \times \mathbb{R}^d$ there
exists a continuous function $\omega _{K}$ with $\omega _{K}( 0)
=0$ such that%
\begin{equation}
\vert f_p^{\infty }( x,u,b,\xi) -f_p^{\infty }( x^{\prime
},u^{\prime },b,\xi)\vert \leq \omega _{K}(\vert
x-x^{\prime }\vert +\vert u-u^{\prime }\vert ) (\vert b\vert ^{p}+\vert \xi\vert) 
\label{modulusofcontinuityp}
\end{equation}%
for every $( x,u,b,\xi) $ and $( x^{\prime },u^{\prime
},b,\xi) $ in $K\times \mathbb{R}^{m}\times \mathbb{R}^{d\times N}.$

Moreover, given $x_{0}\in \Omega ,$ and $\varepsilon >0$ there exists $%
\delta >0$ such that if $\vert x-x_{0}\vert <\delta $ then%
\begin{equation*}
f_p^{\infty }( x,u,b,\xi) -f_p^{\infty }( x_{0},u,b,\xi)
\geq -\varepsilon(|b|^p+|\xi|) 
\end{equation*}%
for every $(u,b,\xi) \in \mathbb{R}^d\times \mathbb{R}^{m}\times \mathbb{R}^{d\times N}.$

\item[iv)] In particular, $f_p^{\infty }$ is continuous.
\end{enumerate}
\end{proposition}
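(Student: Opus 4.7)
The plan is to verify each item by a scaling argument on the defining $\limsup$, reading off the properties of $f$ along the rescaled arguments $(t^{1/p}b, t\xi)$ and passing to $t\to+\infty$. For (ii), I would apply $(H_1)_p$ with the substitutions $b\mapsto t^{1/p}b$ and $\xi\mapsto t\xi$ to get
\[
\tfrac{1}{C}(|b|^{p}+|\xi|)-\tfrac{C}{t}\ \leq\ \tfrac{f(x,u,t^{1/p}b,\,t\xi)}{t}\ \leq\ C\bigl(\tfrac{1}{t}+|b|^{p}+|\xi|\bigr),
\]
and then take $\limsup$ as $t\to+\infty$, which immediately yields (\ref{finftypgrowth}); in particular $f_p^\infty$ is finite everywhere.

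For (i), given a bounded open $D\subset\mathbb{R}^{N}$, $\eta\in L^{\infty}(D;\mathbb{R}^{m})$ with $\int_D\eta\,dy=0$, and $\varphi\in W_0^{1,\infty}(D;\mathbb{R}^d)$, I would test the convex-quasiconvexity of $f(x,u,\cdot,\cdot)$ with $t^{1/p}\eta$ (which still has mean zero) and $t\varphi$ to obtain
\[
f(x,u,t^{1/p}b,\,t\xi)\ \leq\ \tfrac{1}{|D|}\int_{D}f\bigl(x,u,\,t^{1/p}(b+\eta(y)),\,t(\xi+\nabla\varphi(y))\bigr)\,dy.
\]
Dividing by $t$ and taking $\limsup$, I would move the $\limsup$ inside the integral via the reverse Fatou lemma, which is legitimate because by $(H_1)_p$ the integrand is dominated by the $L^1(D)$ function $C(1/t+|b+\eta(y)|^{p}+|\xi+\nabla\varphi(y)|)$, uniformly in $t$. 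The right-hand side then becomes $\tfrac{1}{|D|}\int_D f_p^\infty(x,u,b+\eta(y),\xi+\nabla\varphi(y))\,dy$, proving convex-quasiconvexity of $f_p^\infty$.

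For (iii), the two estimates are obtained by evaluating $(H_2)_p(1)$ and $(H_2)_p(2)$ at the rescaled arguments $(t^{1/p}b,t\xi)$, dividing by $t$, and passing to the $\limsup$; the additive $1$ inside $(1+|b|^{p}+|\xi|)$ is killed by the $1/t$ factor, leaving the sharper factor $(|b|^{p}+|\xi|)$. Finally, for (iv), joint continuity follows by combining (i)--(iii): for each fixed $(x,u)$, a convex-quasiconvex function with the growth (\ref{finftypgrowth}) is $(p,1)$-Lipschitz in $(b,\xi)$ by Remark \ref{remplip} iv), while (iii) gives continuity in $(x,u)$ uniformly on compact subsets of $(b,\xi)$, and the two are glued together by a standard triangle-inequality argument.

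The main obstacle is really only the interchange of $\limsup$ with the integral in (i); everything else is a direct substitution of the homogeneous rescaling $(b,\xi)\mapsto(t^{1/p}b,t\xi)$ into an inequality already known for $f$. The growth condition $(H_1)_p$ is exactly what is needed to provide the $L^1$-dominating function required to apply the reverse Fatou lemma.
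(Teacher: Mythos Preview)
Your proposal is correct and follows essentially the same route as the paper: items (ii)--(iv) are handled identically (rescale, divide by $t$, pass to the $\limsup$, then combine the $(p,1)$-Lipschitz property from Remark~\ref{remplip}~iv) with (iii) via the triangle inequality), while for (i) the paper simply cites \cite[Lemma~2.1]{FKP2}, whose content is precisely your reverse-Fatou argument. The only place where a bit more care than you indicate is needed is in (iii): since $f_p^\infty$ is a $\limsup$, the difference $f_p^\infty(x,u,b,\xi)-f_p^\infty(x',u',b,\xi)$ is not literally a $\limsup$ of the difference, so one either adds the $\omega_K$-term to one side before taking $\limsup$ (using that the error term converges), or, as the paper does, extracts a subsequence realizing one of the two $\limsup$'s; both variants are routine.
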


\begin{proof}

\noindent $i)$ The convexity-quasiconvexity of $f_p^{\infty }$ can be proven exactly as in \cite[Lemma 2.1]{FKP2}.

\noindent $ii)$ By definition $( \ref{recessionp}) $ we may find a
subsequence $\{ t_{k}\} $ such that%
\begin{equation*}
f_p^{\infty }( x,u,b,\xi) =\lim_{k\rightarrow +\infty }\frac{%
f( x,u,t_k^{\frac{1}{p}}b,t_{k}\xi) }{t_{k}}.
\end{equation*}%
By $( H_{1}) _{p}$ one has%
\begin{equation*}
f_p^{\infty }( x,u,b,\xi) \leq \lim_{k\rightarrow +\infty }\frac{%
C( 1+t_k|b|^{p}+t_k|\xi|) }{t_{k}}=C(|b|^p+|\xi|) 
\end{equation*}%
and 
\begin{equation*}
f_p^{\infty }(x,u,b,\xi) \geq \lim_{k\rightarrow +\infty }\frac{%
\frac{1}{C}( t_k|b|^{p}+t_k|\xi|) -C}{t_{k}}\geq \frac{1}{C}(|b|^p+|\xi|) .
\end{equation*}%
Hence $( H_{1}) _{p}$ holds for $f^{\infty }.$

\noindent $iii)$ Again \eqref{recessionp} entails that for every $(x,u), (x^{\prime
},u^{\prime })\in \Omega \times \mathbb{R}^d$ and $(b,\xi )\in 
\mathbb{R}^{m}\times \mathbb{R}^{d\times N}$ that, up to a
subsequence not relabeled, 
\begin{align}\nonumber
& f_p^{\infty }( x,u,b,\xi) -f_p^{\infty }( x^{\prime
},u^{\prime },b,\xi)\leq \lim_{k\rightarrow +\infty }\frac{f(x,u,t_k^{\frac{1}{p}}b,t_{k}\xi )-f(x',u',t_k^\frac{1}{p}b,t_{k}\xi )}{t_{k}}.
\end{align}%
By $(H_{2})_{p}$, for every $K\subset \Omega \times \mathbb{R}^d$ there
exists $\omega _{K}:\mathbb{R\rightarrow }[ 0,+\infty) $
continuous with $\omega _{K}( 0) =0$ such that if $%
(x,u),(x^{\prime },u^{\prime })\in K$, for every $(b,\xi )\in \mathbb{R}%
^{m}\times \mathbb{R}^{d\times N}$ it results 
\begin{align}
\lim\limits_{k\rightarrow +\infty }\frac{f(
x,u,t_k^{\frac{1}{p}}b,t_{k}\xi) -f( x^{\prime },u^{\prime },t_k^{\frac{1}{p}}b,t_{k}\xi) }{%
t_{k}}&\leq\lim\limits_{k\rightarrow +\infty }\frac{\omega_{K}(
\vert x-x^{\prime }\vert +\vert u-u^{\prime }\vert
) ( 1+t_k\vert b\vert ^{p}+t_k\vert \xi
\vert) }{t_{k}} \notag \\
& \displaystyle{=\omega _{K}(\vert x-x^{\prime }\vert
+\vert u-u^{\prime }\vert) (|b|^p+|\xi|).} 
\notag
\end{align}%
Changing the role of $f_p^{\infty }(x,u,b,\xi )$ with $f_p^{\infty }(x^{\prime
},u^{\prime },b,\xi ),$ \eqref{modulusofcontinuityp} follows.

For what concerns the second inequality in $iii)$, by \eqref{recessionp} and $(2)$ of $(H_{2})_{p}$ and, up
to a subsequence not relabeled, we have for every $x,x_{0}\in \Omega $
such that $|x-x_{0}|<\delta $, and every $(u,b,\xi )\in \mathbb R^d \times 
\mathbb{R}^{m}\times \mathbb{R}^{d\times N}$ 
\begin{align*}
f_p^{\infty }(x,u,b,\xi )-f_p^{\infty }(x_{0},u,b,\xi )&\geq
\lim_{k\rightarrow +\infty }\frac{f(x,u,t_k^{\frac{1}{p}}b,t_{k}\xi
)-f(x_{0},u,t_k^{\frac{1}{p}}b,t_{k}\xi )}{t_{k}} \\
&\geq -\varepsilon \lim_{k\rightarrow +\infty }\frac{%
1+t_k|b|^{p}+|t_{k}\xi |}{t_{k}}=-\varepsilon (|b|^p+|\xi |).
\end{align*}

\noindent $iv)$ The convexity-quasiconvexity and
\eqref{finftypgrowth} guarantee that $f_p^{\infty }$ is continuous with respect to $(b,\xi)$, in particular it is $(p,1)-$ Lipschitz continuous in $b$ and $\xi$ uniformly with
respect to $(x,u)$. Thus \eqref{modulusofcontinuityp}, \eqref{p-lipschitzcontinuity} and the triangular
inequality entail that 
\begin{align}
& \displaystyle{|f_p^{\infty }(x,u,b,\xi )-f_p^{\infty }(x^{\prime },u^{\prime
},b^{\prime },\xi ^{\prime })|} \notag \\ 
& \displaystyle{\leq \omega _{k}(|x-x^{\prime }|+|u-u^{\prime }|)(|\xi | +|b|^p) +\gamma|\xi
-\xi ^{\prime }|+ \gamma (1+ |b|^{p-1}+|b'|^{p-1}+ |\xi|^{\frac{1}{p'}}+ |\xi'|^{\frac{1}{p'}})|b-b'|\leq \varepsilon }  \notag
\end{align}%
provided that $|x-x^{\prime }|,|u-u^{\prime }|$, $|b-b'|$ and $|\xi -\xi ^{\prime }|$
are small.\end{proof}

\begin{remark}\label{hypmin} We emphasize that not all the assumptions on $f$ in Proposition \ref{proprecession} are necessary to prove the items above. In particular, one has that the proof of $ii)$ uses only the fact that $f$ verifies $(H_1)_p$. Moreover,  $iii)$ follows from \eqref{recessionp} and $(H_{2})_{p}$ $i)$ and $ii)$.
\end{remark}
Regarding the recession function for $p=\infty$ in \eqref{recession}, a result analogous to Proposition \ref{proprecession} holds, but the proof is omitted for the sake of brevity.

\begin{proposition}
\label{proprecessioninfty} Let $f:\Omega \times \mathbb R^d\times \mathbb{R}^{m}\times \mathbb{R}^{d\times N}\rightarrow \lbrack
0,+\infty \lbrack $,  and let $f^\infty$ be defined by \eqref{recession}, provided $f$ satisfies $(H_{0})$, $%
(H_{1})_{\infty},~(H_{2})_{\infty}$, then

\begin{enumerate}
\item[i)] $f^{\infty }$ is convex-quasiconvex.

\item[ii)] For every $M>0$, there exists $C_M>0$ such that $\frac{1}{C_M}\vert \xi \vert
\leq f^{\infty }(x,u,b,\xi) \leq C_M\vert \xi \vert$, for every $b\in \mathbb R^m$ such that $|b|\leq M$.

\item[iii)] For every $M>0,$ and for every compact set $%
K\subset\Omega\times\mathbb{R}^d$ there exists a continuous function $
\omega_{M,K}:\mathbb{R\rightarrow}[0,+\infty) $ with $
\omega_{M,K}( 0) =0$ such that if $\vert b\vert \leq M$
then
\begin{equation*}
\vert f^\infty(x,u,b,\xi) -f^\infty(
x^{\prime},u^{\prime},b,\xi) \vert \leq\omega_{M,K}(
\vert x-x^{\prime }\vert +\vert u-u^{\prime}\vert) \vert \xi\vert 
\end{equation*}
for every $(x,u,\xi) , (x^{\prime},u^{\prime},\xi)
\in K\times\mathbb{R}^{d\times N}.$

Moreover, given $x_{0}\in \Omega ,$ and $\varepsilon >0$ there exists $\delta >0$ such that if $\vert x-x_{0}\vert <\delta $ then%
\begin{equation*}
f^\infty ( x,u,b,\xi) -f^\infty( x_{0},u,b,\xi)
\geq -\varepsilon|\xi| 
\end{equation*}%
for every $( u,b,\xi) \in \mathbb{R}^d\times \mathbb{R}%
^{m}\times \mathbb{R}^{d\times N}.$

\item[iv)] In particular, $f^{\infty }$ is continuous.
\end{enumerate}
\end{proposition}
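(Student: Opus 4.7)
The plan is to mirror the proof of Proposition \ref{proprecession} step by step, adapting each argument to the $(\infty,1)$-recession function \eqref{recession}. Since here $t$ scales only $\xi$ and not $b$, every computation reduces to a one-parameter rescaling, and the hypotheses $(H_1)_\infty$ and $(H_2)_\infty$ are designed precisely so that the constants behave well on each slab $\{|b|\le M\}$.

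For $i)$ I would proceed as in \cite[Lemma 2.1]{FKP2}. Fix $(x,u,b,\xi)$, evaluate the convex-quasiconvex inequality for $f(x,u,\cdot,\cdot)$ at the point $(b,t\xi)$ with test function $t\varphi$ (so that $\nabla(t\varphi)=t\nabla\varphi$), divide by $t$, and take $\limsup_{t\to+\infty}$. Since $\eta\in L^\infty(D;\mathbb R^m)$, the translate $b+\eta$ stays in a bounded set, so $(H_1)_\infty$ gives a uniform $L^\infty$ bound on $f(x,u,b+\eta(y),t(\xi+\nabla\varphi(y)))/t$; reverse Fatou then allows one to exchange $\limsup_t$ with $\int_D$, yielding the convex-quasiconvex inequality for $f^\infty$.

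For $ii)$ I would replace $\xi$ by $t\xi$ in $(H_1)_\infty$, divide by $t$, and pass to the limit: the additive constant $-C_M$ and the $+1$ in the upper bound disappear, leaving $\tfrac{1}{C_M}|\xi|\le f^\infty(x,u,b,\xi)\le C_M|\xi|$ for every $|b|\le M$. For $iii)$, fix $(x,u),(x',u')\in K$ and extract a sequence $t_k\to+\infty$ realising the $\limsup$ in $f^\infty(x,u,b,\xi)$; applying the first inequality of $(H_2)_\infty$ at $(x,u,b,t_k\xi)$ and $(x',u',b,t_k\xi)$, dividing by $t_k$ and letting $k\to+\infty$ gives $f^\infty(x,u,b,\xi)-f^\infty(x',u',b,\xi)\le \omega_{M,K}(|x-x'|+|u-u'|)|\xi|$, and swapping the roles of the two points produces the absolute-value estimate. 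The lower estimate near $x_0$ is obtained analogously from the second half of $(H_2)_\infty$, with $\varepsilon$ in place of $\omega_{M,K}$.

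Item $iv)$ follows by combining the convex-quasiconvex structure from $i)$ with the growth bound in $ii)$, which yields continuity in $(b,\xi)$ (in fact an $(\infty,1)$-Lipschitz estimate, thanks to Remark \ref{remplip} $v)$), together with the $(x,u)$-modulus of continuity from $iii)$; the triangle inequality then closes the argument. The only genuinely delicate step is the interchange of $\limsup_t$ with the integral in $i)$, which rests on the uniform $L^\infty$ upper bound supplied by $(H_1)_\infty$ on each slab $\{|b|\le M\}$; apart from this, the proof is a straightforward $\xi$-only rescaling of the one carried out for $f^\infty_p$.
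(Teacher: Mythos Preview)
Your proposal is correct and follows exactly the approach the paper indicates: the paper omits the proof of Proposition~\ref{proprecessioninfty} entirely, stating only that ``a result analogous to Proposition~\ref{proprecession} holds, but the proof is omitted for the sake of brevity,'' and your argument is precisely the line-by-line adaptation of that proof to the $(\infty,1)$-recession function, with the expected simplifications coming from the fact that only $\xi$ is rescaled. Your remark about reverse Fatou in $i)$ is a useful explicit justification that the paper leaves implicit even in the $p$-case (where it defers to \cite[Lemma~2.1]{FKP2}).
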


\subsection{The surface energy densities}

For any convex-quasiconvex function $f:\Omega\times\mathbb{R}^d\times%
\mathbb{R}^{m}\times\mathbb{R}^{d\times N}\rightarrow[ 0,+\infty)
,$ and $1<p\leq \infty$, we define the following surface energy densities $K_p:\Omega \times\mathbb R ^d\times\mathbb{R}^d\times\mathbb{R}^{m}\times S^{N-1}\rightarrow[
0,+\infty) $ by \eqref{Kp} if $1<p<\infty$ and  by \eqref{Kinfty} if $p=\infty$.   

A density argument guarantees that the family ${\cal A}$ in formulas \eqref{calA} can be constituted by functions in $W^{1,\infty}$, as quoted in \cite{AFP}. Analogously, in \eqref{Kp} the set $L^\infty$ can be replaced by $L^p$. 

The following result provides some properties of the density $K_p$ and develops along the lines of Lemma 2.15 in \cite{FM2}.

\begin{proposition}\label{Lemma2.15FM}
Assume $f:\Omega\times \mathbb{R}^d\times\mathbb{R}^m\times\mathbb{R}^{d\times N}\longrightarrow [0,+\infty)$ is a convex-quasiconvex function satisfying ${(H_1)_p}$, ${(H_2)_p}$ and ${(H_3)_p}$. Then
\begin{itemize}
\item[a)] there exists a constant $C$ such that
$$
|K_p(x,0,c,d,\nu)- K_p(x,0,c',d',\nu)| \leq C(|c-c'|+ |d-d'|)$$  for every $(x,c,d,\nu)$ and $(x,c',d',\nu)$ in $\Omega \times \mathbb R^d \times \mathbb R^d \times S^{N-1}$;

\item[b)] $(x,b,\nu) \mapsto K_p(x,b,c,d,\nu)$ is upper semicontinuous for every $c,d\in \mathbb R^d$;
\item[c)] $K_p(\cdot,\cdot, \cdot, 0,\cdot)$ is upper semicontinuous in $\Omega \times \mathbb R^d \times \mathbb R^d\times S^{N-1}$;
\item[d)] there exists a constant $C>0$ such that $$0\leq K_p(x,b,c,d,\nu) \leq C(|c-d|+ |b|^p),\ \forall\ (x,b,c,d,\nu)\in \Omega\times \mathbb R^m\times  \mathbb R^d\times \mathbb{R}^d \times S^{N-1}.$$
\item[e)] For  all $x_0 \in \Omega$  and for all $\varepsilon >0$ there exists $\delta>0$ such that $|x-x_0|<\delta$ implies
$$
\displaystyle{|K_p(x,b,c,d,\nu)-K_p(x_0,b,c,d,\nu)|\leq \varepsilon C(1+|b|^p+ |d-c|)}.
$$
\end{itemize}
\end{proposition}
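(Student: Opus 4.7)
The plan is to adapt the strategy of [FM2, Lemma 2.15] to the present setting, using the properties of $f^\infty_p$ proved in Proposition \ref{proprecession} and accommodating the additional field $\eta$ (with its integral constraint $\int_{Q_\nu}\eta\,dy=b$) throughout. I would treat (d) first, since its bound will be reused in (e).

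For (d), I take the affine test function $w$ depending only on $y\cdot\nu$ that equals $c$ on $\{y\cdot\nu\geq \varepsilon/2\}$, equals $d$ on $\{y\cdot\nu\leq -\varepsilon/2\}$, and interpolates linearly in between (so $w$ is automatically $1$-periodic in the transverse directions and lies in $\mathcal{A}(c,d,\nu)$), together with the constant field $\eta\equiv b$. The upper bound $f^\infty_p(x,u,b,\xi)\leq C(|b|^p+|\xi|)$ from Proposition \ref{proprecession}(ii) then yields $K_p(x,b,c,d,\nu)\leq C(|b|^p+|c-d|)$ after integrating out the normal variable.

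For (a), fix $\delta>0$ and choose $(w,\eta)$ admissible for $K_p(x,0,c,d,\nu)$ with energy at most $K_p(x,0,c,d,\nu)+\delta$; by density we may assume $\eta\in L^\infty$. I modify only $w$, so that $\int \eta\,dy=0$ is preserved, by setting $w'(y):=w(y)+\phi_\lambda^{+}(y)(c'-c)+\phi_\lambda^{-}(y)(d'-d)$, where $\phi_\lambda^{\pm}$ are one-dimensional cutoffs (functions of $y\cdot\nu$ alone) supported in layers of width $\lambda$ adjacent to the top/bottom face, with $\phi_\lambda^{\pm}=1$ on the face and $|\nabla\phi_\lambda^{\pm}|\leq C/\lambda$. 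Then $w'\in\mathcal{A}(c',d',\nu)$, and the extra energy is bounded, via Proposition \ref{proprecession}(ii), by
\[
C\int_{\text{layers}}\bigl(|\eta|^p+|\nabla w|\bigr)dy + C(|c-c'|+|d-d'|),
\]
where the first term tends to $0$ as $\lambda\to 0^+$ because $\eta\in L^\infty$ and $\nabla w\in L^1$. Hence $K_p(x,0,c',d',\nu)\leq K_p(x,0,c,d,\nu)+C(|c-c'|+|d-d'|)+2\delta$ once $\lambda$ is small enough; swapping roles and sending $\delta\to 0$ gives (a).

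For (e), I test near-optimal competitors for $K_p(x_0,b,c,d,\nu)$ against $K_p(x,b,c,d,\nu)$ unchanged; by (d) such competitors may be chosen with $\int_{Q_\nu}(|\eta|^p+|\nabla w|)dy\leq C(1+|b|^p+|c-d|)$, and then Proposition \ref{proprecession}(iii) bounds the difference by $\varepsilon C(1+|b|^p+|c-d|)$ whenever $|x-x_0|<\delta$. For (b), given $(x_n,b_n,\nu_n)\to(x,b,\nu)$, I pick a near-optimal $(w,\eta)$ for $K_p(x,b,c,d,\nu)$ and build a competitor for $K_p(x_n,b_n,c,d,\nu_n)$ by composing with a rotation $R_n$ sending $(\nu_1,\dots,\nu_{N-1},\nu)$ to $(\nu_1^n,\dots,\nu_{N-1}^n,\nu_n)$ and by shifting $\eta\mapsto\eta\circ R_n^{-1}+(b_n-b)$ to restore the average. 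After the change of variables, the resulting integral reads $\int_{Q_\nu}f^\infty_p(x_n,w,\eta+(b_n-b),\nabla w)dy$; dominated convergence, applied with the continuity of $f^\infty_p$ from Proposition \ref{proprecession}(iv) and the $(p,1)$-Lipschitz estimate of Remark \ref{remplip}(iv) (which passes to $f^\infty_p$ in the limit defining it), gives $\limsup_n K_p(x_n,b_n,c,d,\nu_n)\leq K_p(x,b,c,d,\nu)+\delta$, and $\delta\to 0$ closes the argument. Property (c) follows from (a) (continuity in $c$) combined with (b). I expect the delicate point to be the rotation step in (b): the orthonormal basis completing $\nu_n$ must be chosen depending continuously on $\nu_n$ near $\nu$, which is the classical care required in [FM2] and compatible with the periodicity condition built into $\mathcal{A}(c,d,\nu)$.
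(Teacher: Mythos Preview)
Your proposal is correct and for parts (b), (c), (d), (e) it follows essentially the paper's route (including the reformulation of $K_p$ via a rotation $R$ with $Re_N=\nu$, the shift $\eta\mapsto\eta+(b_n-b)$, and the appeal to \cite[Proposition~2.9(ii)]{BF} for (e)); one cosmetic slip in (b) is that after the change of variables the gradient should read $\nabla w\,R_n^{T}$ rather than $\nabla w$, but since $R_n\to R$ this is harmless under the dominated convergence you invoke.

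The genuine divergence is in (a). The paper follows the scaling trick of \cite[Lemma~2.15]{FM2}: it squeezes a competitor $w$ for $K_p(x,0,c,d,\nu)$ into the middle half $\{|y\cdot\nu|\le 1/4\}$ via $w^\ast(y)=w(2y)$ (joined to $c',d'$ by affine caps), and correspondingly sets $\eta^\ast(y)=2^{1/p}\eta(2y)$ on that half and $\eta^\ast=0$ elsewhere; the factor $2^{1/p}$ is dictated by the $(p,1)$-homogeneity \eqref{p-1homogeneity} of $f^\infty_p$, which makes the rescaled energy coincide exactly with $\int_{Q_\nu}f^\infty_p(x,w,\eta,\nabla w)\,dy$. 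Your argument instead keeps $\eta$ untouched and only perturbs $w$ in thin layers near the faces, trading the homogeneity identity for the absolute continuity of $\int_{\text{layers}}(|\eta|^p+|\nabla w|)\,dy$ as $\lambda\to 0$. Both arguments yield the same Lipschitz bound; the paper's is cleaner in that no limiting $\lambda\to 0$ is needed (the constant is explicit after one rescaling), while yours is more flexible in that it never uses \eqref{p-1homogeneity} and would work verbatim for any recession-type density with linear growth in the gradient.
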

\begin{proof}[Proof]
Condition c) is a consequence of a) and b). To prove a) we construct an admissible field $w^\ast \in {\cal A}(c',d',\nu)$ as in Lemma 2.15 in \cite{FM2} and we define $\eta^\ast \in L^\infty(Q;\mathbb R^m)$ with 0 average in $Q_\nu$ as follows 

$$\eta^\ast(y):=\left\{
\begin{array}{ll}
2^{\frac{1}{p}}\eta(2y) &\hbox{ if } |y\cdot \nu|\leq \frac{1}{4},\vspace{0.2cm}\\
0 &\hbox{ if }\frac{1}{4}\leq |y \cdot \nu| \leq \frac{1}{2}.
\end{array}
\right.
$$
where $\eta$ has been extended by periodicity to all $\mathbb R^N$ and still denoted by $\eta$.
Using conditions \eqref{p-1homogeneity}, \eqref{finftypgrowth} and the periodicity of $w$ and $\eta$ one obtains
$$
K_p(x,0,c',d',\nu)\leq \displaystyle{\int_{Q_\nu}f^\infty_p(x,w(z),\eta(z),\nabla w(z))\,dz + C(|c-c'|+|d-d'|).}
$$
Taking the infimum over all $w \in \mathcal{A}(c,d,\nu)$ and $\eta \in L^\infty(Q;\mathbb R^m)$ we conclude that
$$
K_p(x,0,c',d',\nu)\leq K_p(x,0,c,d,\nu)+C(|c-c'|+|d-d'|).
$$
The reverse inequality is obtained by letting $w \in \mathcal{A}(c',d',\nu)$ and building $w^{\ast} \in \mathcal{A}(c,d,\nu).$

To prove b), we start noticing that

$$
\displaystyle{K_p(x,b,c,d,\nu):=\inf\left\{\int_{Q}f_p^\infty(x,w(y),\eta(y),\nabla w(y)R^T)\,dy:\ w\in\mathcal{A}(c,d,e_N), \eta\in L^\infty(Q;\mathbb{R}^m),\ \int_{Q} \eta\,dy=b\right\}},
$$
where $R\in SO(N)$ is such that $Re_N=\nu$ and $RQ=Q_\nu$. Also, due to the growth conditions, by density arguments, it suffices to choose smooth functions $w$.

Let $(x_n,b_n,\nu_n) \to (x,b,\nu)$, given $\e>0$ let $w \in {\cal A}(c,d,e_N)$ be a smooth function and $\eta \in L^\infty(Q;\mathbb{R}^m)$ with $\int_Q \eta\,dy =b$ such that
$$
\left| K_p(x,b,c,d,\nu)- \int_Q f^\infty_p(x, w(y), \eta(y), \nabla w(y)R^T)\, dy \right| < \e.
$$
Consider $\eta_n \in L^\infty(Q;\mathbb R^m)$ such that $\int_{Q}\eta_n\,dy = b_n$ and $\eta_n \to \eta$ in $L^p(Q;\mathbb R^m)$. For example $\eta_n:=\eta+ b_n -b$. Let $X$ be a compact subset of $\Omega \times \mathbb R^d$ containing a neighborhood of $\{(x,w(y)):\ y \in Q\}$. By condition \eqref{modulusofcontinuityp},  there exists a continuous function $\omega_{X}$, with $\omega_{X}(0)=0$ such that
\begin{equation}\label{eq2.8FMr}
\left|f^\infty_p(y,u,b, \xi)- f^\infty_p(y',u', b, \xi)\right| \leq \omega_{X}(|y-y'|+ |u-u'|) (|b|^p+|\xi|)
\end{equation}
for every $(y,u,\xi), (y',u',\xi) \in X \times \mathbb R^{d\times N}$ and $b\in \mathbb{R}^m$. As already noticed in Proposition  \ref{proprecession}, the recession function $f^\infty_p$ is convex-quasiconvex and we have the following $(p,1)-$ Lipschitz condition for $f^\infty_p$:
\begin{equation}\label{plipschitz}
\left|f^\infty_p(x,u,b,\xi)- f^\infty_p(x,u,b',\xi')\right|\leq \gamma( |\xi-\xi'|+ (1+|b|^{p-1}+|b'|^{p-1}+|\xi|^{\frac{1}{p}}+|\xi'|^{\frac{1}{p}})|b-b'|),
\end{equation}
for every $(x,u,b,\xi)$ and $(x,u,b',\xi')$ in $\Omega\times\mathbb{R}^d\times \mathbb R^m\times\mathbb{R}^{d\times N}$.

As in \cite[Lemma 2.15]{FM2}, consider orthogonal transformations $R_n$ such that $R_n e_N= \nu_n$ and $R_n\to R$. By virtue of the preceding estimates, and standard arguments, for $n$ large enough we have
$$
 K_p(x,b_n,c,d,\nu_n)\leq 
\displaystyle{\e + \int_Q f_p^\infty(x,w(y),\eta(y), \nabla w(y)R^T)\,dy}
\leq 2 \e + K_p(x,b,c,d,\nu).
$$
Letting $\e \to 0^+$ we conclude that \;
$
\displaystyle{\limsup_{n \to + \infty} K_p(x,b_n,c,d, \nu_n) \leq K_p(x,b,c,d,\nu).}
$

The proof of d) is identical to the proof of \cite[Lemma 2.15 d)]{FM2}.

The proof of $e)$ develops along the lines of \cite[Proposition 2.9 (ii)]{BF}.
\end{proof}

\begin{proposition}\label{Kindependentofv}
Let $f:\Omega\times \mathbb{R}^d \times \mathbb{R}^m\times \mathbb{R}^{d\times N}\longrightarrow [0,+\infty)$ be a continuous function, $f^\infty$ be its recession function given by $(\ref{recession})$ and let $K_\infty$ be defined as in \eqref{Kinfty}. Then $K_\infty(x,\cdot,c,d,\nu)$ is a constant function for any fixed $(x,c,d,\nu)\in\Omega\times\mathbb{R}^d\times\mathbb{R}^d\times S^{N-1}$.
\end{proposition}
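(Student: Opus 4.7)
The proof reduces to showing $K_\infty(x, b, c, d, \nu) \le K_\infty(x, b', c, d, \nu)$ for arbitrary $b, b' \in \mathbb{R}^m$, since exchanging the roles of $b$ and $b'$ yields the reverse inequality. The strategy exploits a single observation about the $(\infty,1)$-recession function: $f^\infty$ is positively $1$-homogeneous in its last variable, so in particular
\[
f^\infty(x, u, b, 0)=\limsup_{t\to +\infty}\frac{f(x, u, b, 0)}{t}=0
\]
for every $(x,u,b)\in\Omega\times\mathbb R^d\times\mathbb R^m$, since $f$ is finite. Consequently, on any subregion of $Q_\nu$ where $\nabla w$ vanishes, the integrand in \eqref{Kinfty} is identically zero, regardless of the value of $\eta$ there.

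Given $\varepsilon>0$, I would pick an admissible pair $(w,\eta)$ for $K_\infty(x, b', c, d, \nu)$ whose energy is within $\varepsilon$ of the infimum. By a rescaling/density argument in the $\nu$-direction, I may assume that $w$ is already constant equal to $c$ on a thin top slab $L_\delta^+:=\{y\in Q_\nu:\, y\cdot\nu>1/2-\delta\}$ and constant equal to $d$ on the corresponding bottom slab, at the cost of an error tending to $0$ as $\delta\to 0^+$. Next, define
\[
\tilde\eta:=\eta+\frac{b-b'}{|L_\delta^+|}\,\chi_{L_\delta^+},
\]
which lies in $L^\infty(Q_\nu;\mathbb R^m)$ and satisfies $\int_{Q_\nu}\tilde\eta\,dy=b'+(b-b')=b$, so that $(w,\tilde\eta)$ is admissible for $K_\infty(x, b, c, d, \nu)$.

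For the energy: since $\nabla w=0$ on $L_\delta^+$, the integrand $f^\infty(x,w,\tilde\eta,\nabla w)$ vanishes on $L_\delta^+$ by the key observation above. Hence
\[
\int_{Q_\nu} f^\infty(x,w,\tilde\eta,\nabla w)\,dy=\int_{Q_\nu\setminus L_\delta^+} f^\infty(x,w,\eta,\nabla w)\,dy\le K_\infty(x, b', c, d, \nu)+\varepsilon+o(1)_{\delta\to 0}.
\]
Taking the infimum on the left, then letting first $\delta\to 0^+$ and then $\varepsilon\to 0^+$, one obtains $K_\infty(x, b, c, d, \nu)\le K_\infty(x, b', c, d, \nu)$, as desired.

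The principal technical point I expect to be the main obstacle is the preliminary step of approximating a generic $w\in\mathcal A(c,d,\nu)$ by an admissible field that is already constant in thin layers adjacent to the top and bottom faces of $Q_\nu$. The natural construction compresses $w$ in the $\nu$-direction and extends by $c$ and $d$ on the two outer slabs, but the resulting change in $\nabla w$ acts only on the $\nu$-component, multiplying it by a factor $1/(1-2\delta)\to 1$. Since $f$ is only assumed continuous, continuity of $f^\infty$ in the gradient variable is not automatic, and the convergence of the energy under this compression must be handled via the positive $1$-homogeneity of $f^\infty$ together with Fatou-type estimates, to ensure that the auxiliary error is indeed $o(1)$ as $\delta\to 0^+$.
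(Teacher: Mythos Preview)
Your core observation---that $f^\infty(x,u,b,0)=0$ by positive $1$-homogeneity, so the value of $\eta$ is irrelevant wherever $\nabla w$ vanishes---is exactly the mechanism the paper uses. The difference lies in how you manufacture a region where $\nabla w=0$, and your version creates a genuine difficulty that the paper's avoids entirely.

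You compress $w$ in the $\nu$-direction by a factor $1/(1-2\delta)$, which changes only the $\nu$-component of $\nabla w$. As you correctly flag, this is not a scalar multiple of the original gradient, so positive $1$-homogeneity of $f^\infty$ gives you nothing, and under the sole hypothesis that $f$ is continuous (no growth, no convexity) you have no continuity of $f^\infty$ in $\xi$ to fall back on. Your proposed ``Fatou-type estimates'' do not close this gap: the integrand after compression need not converge pointwise to the original one, and there is no obvious dominating function. So the $o(1)_{\delta\to 0}$ term is unjustified as stated.

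The paper sidesteps this by rescaling \emph{uniformly} by a factor $2$ in all coordinates. Extend $w$ and $\eta$ by $Q_\nu$-periodicity, set $\bar w(y):=w(2y)$ and $\bar\eta(y):=\eta(2y)$ on $\{|y\cdot\nu|\le 1/4\}$, and put $\bar w=c$ or $d$ and $\bar\eta=$ a suitable constant $k$ on the two outer slabs so that $\int_{Q_\nu}\bar\eta=\bar b$. Now $\nabla\bar w(y)=2\nabla w(2y)$ is a genuine scalar multiple, so $1$-homogeneity gives $f^\infty(x,\bar w,\bar\eta,\nabla\bar w)=2\,f^\infty(x,w(2y),\eta(2y),\nabla w(2y))$, and after the change of variables $z=2y$ the factor $2/2^N$ is exactly compensated by the $2^{N-1}$ periodic copies in the transverse directions. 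The result is an exact equality of energies---no $\varepsilon$, no $\delta$, no limiting argument. The fix to your proof is simply to replace the one-directional compression by this uniform dilation.
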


\begin{proof}
Let $(x,c,d,\nu)$ be arbitrary in $\Omega\times\mathbb{R}^d\times\mathbb{R}^d\times S^{N-1}$. Let $b,\overline{b}\in\mathbb{R}^m$ such that $b\neq\overline{b}$. We claim that $K_\infty(x,\overline{b},c,d,\nu)\le K_\infty(x,b,c,d,\nu)$. 
Let $w\in\mathcal{A}(c,d,\nu)$ and $\eta\in L^\infty(Q_\nu;\mathbb{R}^m)$ such that $\int_{Q_\nu}\eta\,dy=b$ be arbitrary and extend them by $Q_\nu$-periodicity to all $\mathbb{R}^N$. Then define in $Q_\nu$
$$
\overline{w}(y):=\left\{\begin{array}{ll} 
c &\hbox{if }-\frac{1}{2}\le y\cdot\nu<-\frac{1}{4}, \vspace{0.1cm} \\
w(2y)& \hbox{if }|y\cdot\nu|\le\frac{1}{4}, \vspace{0.1cm}\\
d &\text{if }\frac{1}{4}< y\cdot\nu\le\frac{1}{2}, \end{array}\right.  \overline{\eta}(y):=\left\{\begin{array}{cl}\eta(2y)& \text{if }|y\cdot\nu|\le\frac{1}{4},\vspace{0.1cm}\\k &\text{if }\frac{1}{4}< |y\cdot\nu|\le\frac{1}{2}, \end{array}\right.
$$ where $k$ is the constant such that $\int_{Q_\nu}\overline{\eta}\,dy=\overline{b}$, $k=2\overline{b}- b$. Notice that $\overline{w}\in\mathcal{A}(c,d,\nu)$, thus
$$\begin{array}{rcl}K_\infty(x,\overline{b},c,d,\nu)&\le& \displaystyle{\int_{Q_\nu}f^\infty(x,\overline{w}(y),\overline{\eta}(y),\nabla\overline{w}(y))\,dy=\int_{\{y\in Q_\nu:\ |y\cdot\nu|\le 1/4\}}f^\infty(x,w(2y),\eta(2y),2\nabla w(2y))\,dy}\vspace{0.1cm}\\
& = & \displaystyle{\frac{2}{2^N}\int_{\{z \in \mathbb R^N: |z\cdot\nu_i|\le 1, i=1,\dots,N-1, |z\cdot\nu|\le 1/2\}}f^\infty(x,w(z),\eta(z),\nabla w(z))\,dz}\vspace{0.1cm}\\
& = & \displaystyle{\int_{Q_\nu}f^\infty(x,w(z),\eta(z),\nabla w(z))\,dz,}$$
\end{array}$$
where we have used in the second identity the fact that $f^\infty(x,u,b,\cdot)$ is a positively 1-homogeneous function so, in particular, $f^\infty(x,u,b,0)=0$. The last identity follows from the periodicity of $w$ and $\eta$. The claim is achieved by taking the infimum on $w$ and $\eta$ on the right hand side. 

The reverse inequality follows by interchanging the roles of $b$ and $\overline{b}.$
\end{proof}
\medskip

\begin{proposition}\label{Lemma2.15FMinfty}
Assume that $f:\Omega\times \mathbb{R}^d\times\mathbb{R}^m\times\mathbb{R}^{d\times N}\longrightarrow [0,+\infty)$ is a convex-quasiconvex function satisfying $(H_0)$, $(H_1)_\infty$, $(H_2)_\infty$ and $(H_3)_\infty$. Then
\begin{itemize}
\item[a)] there exists a constant $C>0$ such that
$$
|K_\infty(x,b,c,d,\nu)- K_\infty(x,b',c',d',\nu)| \leq C(|c-c'|+ |d-d'|)$$  for every $(x,b,c,d,\nu)$ and $(x,b',c',d',\nu)$ in $\Omega \times \mathbb R^m \times \mathbb R^d \times \mathbb{R}^d\times S^{N-1}$;
\item[b)] $(x,b,\nu) \mapsto K_\infty(x,b,c,d,\nu)$ is upper semicontinuous for every $c,d\in \mathbb R^d$;
\item[c)] $K_\infty$ is upper semicontinuous in $\Omega \times \mathbb R^m \times \mathbb R^d \times\mathbb{R}^d\times S^{N-1}$;
\item[d)] there exists a constant $C>0$ such that $K_\infty(x,b,c,d,\nu) \leq C|c-d|$, for every $(x,b,c,d,\nu)\in \Omega\times \mathbb R^m\times  \mathbb R^d\times \mathbb{R}^d \times S^{N-1}.$
\end{itemize}
\end{proposition}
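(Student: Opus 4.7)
The plan is to follow the template of Proposition \ref{Lemma2.15FM}, exploiting Proposition \ref{Kindependentofv}---which removes the $b$-dependence of $K_\infty$---together with the properties of $f^\infty$ collected in Proposition \ref{proprecessioninfty}. Because $K_\infty(x,\cdot,c,d,\nu)$ is constant, assertion (a) reduces to Lipschitz continuity in $(c,d)$ alone; item (c) will then follow from (a) and (b) via the triangle inequality, exactly as in Proposition \ref{Lemma2.15FM}.

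I would dispatch (d) first. By Proposition \ref{Kindependentofv} it suffices to test with $\eta\equiv 0$ and the affine field $w(y):=\tfrac{c+d}{2}+(c-d)(y\cdot\nu)$, which lies in $\mathcal{A}(c,d,\nu)$, is constant (hence periodic) in the tangential directions, has $|\nabla w|=|c-d|$, and takes values in the segment joining $c$ to $d$. Applying Proposition \ref{proprecessioninfty}(ii) with $M=1$ gives $f^\infty(x,w(y),0,\nabla w(y))\le C_1|c-d|$, and integrating over $Q_\nu$ yields (d).

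For (a), fix $(x,\nu)$ and, using Proposition \ref{Kindependentofv}, reduce to $b=b'=0$. Choose $(w,\eta)\in\mathcal{A}(c,d,\nu)\times L^\infty(Q_\nu;\mathbb{R}^m)$ with $\int_{Q_\nu}\eta\,dy=0$ that is $\varepsilon$-optimal for $K_\infty(x,0,c,d,\nu)$. I would then build a competitor for $K_\infty(x,0,c',d',\nu)$ exactly as in Proposition \ref{Lemma2.15FM}(a): extend $w,\eta$ by $Q_\nu$-periodicity, set $w^\ast(y):=w(2y)$ and $\eta^\ast(y):=\eta(2y)$ on the central slab $\{|y\cdot\nu|\le 1/4\}$, and on the two boundary slabs $\{1/4<y\cdot\nu\le 1/2\}$ and $\{-1/2\le y\cdot\nu<-1/4\}$ take $\eta^\ast\equiv 0$ and let $w^\ast$ be the affine interpolation in the $\nu$ direction from $c$ to $c'$, respectively from $d$ to $d'$. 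The positive $1$-homogeneity of $f^\infty$ in the last variable, combined with the periodicity, absorbs the Jacobian arising from $y\mapsto 2y$ on the central slab, so its contribution equals the original $\int_{Q_\nu}f^\infty(x,w,\eta,\nabla w)\,dy$; on the two boundary slabs, where $|\nabla w^\ast|\le 4(|c-c'|+|d-d'|)$ and $\eta^\ast=0$, Proposition \ref{proprecessioninfty}(ii) bounds the contribution by $C(|c-c'|+|d-d'|)$. Taking the infimum over $(w,\eta)$ and then interchanging the roles of $(c,d)$ and $(c',d')$ delivers (a).

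For (b), let $(x_n,b_n,\nu_n)\to(x,b,\nu)$. After appealing once more to Proposition \ref{Kindependentofv} to drop the $b$-variables, rewrite $K_\infty(x,0,c,d,\nu)$ on the cube $Q$ using rotations $R\in SO(N)$ with $Re_N=\nu$, and select a smooth, $\varepsilon$-optimal pair $(w,\eta)\in\mathcal{A}(c,d,e_N)\times L^\infty(Q;\mathbb{R}^m)$ with $\int_Q\eta\,dy=0$. Choose $R_n\to R$ with $R_ne_N=\nu_n$ and test $K_\infty(x_n,0,c,d,\nu_n)$ against $(w,\eta)$ composed with $R_n^T$: the joint continuity of $f^\infty$ (Proposition \ref{proprecessioninfty}(iv)) and the uniform majorant $C_{\|\eta\|_\infty}(1+|\nabla w(y)|)$ from Proposition \ref{proprecessioninfty}(ii) (which is available precisely because $w$ is smooth) allow dominated convergence to pass the limit through the integral. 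This yields $\limsup_n K_\infty(x_n,b_n,c,d,\nu_n)\le K_\infty(x,b,c,d,\nu)+C\varepsilon$, and sending $\varepsilon\to 0^+$ completes (b); statement (c) is then immediate from (a) and (b). The only genuine technicality, exactly as in the $K_p$ case, is the control of the rotation error, which is handled via the linear growth in $\xi$ and the smoothness of $w$.
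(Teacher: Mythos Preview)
Your proposal is correct and follows essentially the same approach as the paper's proof, which is itself a terse adaptation of Proposition~\ref{Lemma2.15FM} via Proposition~\ref{Kindependentofv}: reduce to $b=0$, rescale the competitor on the central slab with $\eta^\ast(y)=\eta(2y)$ there and $0$ outside (exactly your construction for (a)), and handle (b) by rotation $R_n\to R$ together with the growth and continuity properties of $f^\infty$ from Proposition~\ref{proprecessioninfty}. The only cosmetic difference is that for (b) the paper invokes the explicit modulus of continuity and $(\infty,1)$-Lipschitz estimate \eqref{infty-lipschitzcontinuity}, whereas you package the same information as joint continuity of $f^\infty$ plus a dominated-convergence majorant $C_{\|\eta\|_\infty}|\nabla w|$; both routes are equivalent here.
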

\begin{proof}[Proof]
The proof is very similar to Proposition \ref{Lemma2.15FM}. We just emphasize the main differences. 
To prove a) we start by noticing that by Proposition \ref{Kindependentofv}, $K_\infty(x,b,c,d,\nu)=K_\infty(x,0,c,d,\nu)$ and $K_\infty(x,b',c,d',\nu)=K_\infty(x,0,c',d',\nu)$. So we fix $w\in\mathcal{A}(c,d,\nu), \, \eta\in L^\infty(Q_\nu;\mathbb{R}^m)$ with $\int_{Q_\nu}\eta\, dy=0$ and construct $w^\ast \in {\cal A}(c',d',\nu)$ similarly as in Lemma 2.15 in \cite{FM2} and let $\eta^\ast \in L^\infty(Q_\nu;\mathbb R^m)$ with average 0 in $Q_{\nu}$ be given by

$$
\eta^\ast(y):=\left\{
\begin{array}{ll}
	\eta(2y) &\hbox{ if } |y\cdot \nu|\leq \frac{1}{4},\vspace{0.2cm}\\
	0 &\hbox{ if }\frac{1}{4}\leq |y \cdot \nu| \leq \frac{1}{2}.
\end{array}
\right.
$$

The proof of b) follows directly from Proposition \ref{Lemma2.15FM} (b) using again Proposition \ref{Kindependentofv}, replacing \eqref{eq2.8FMr} by 
\begin{equation}\nonumber
\left|f^\infty(y,u,b,\xi)- f^\infty(y',u', b, \xi)\right| \leq \omega_{X, M}(|y-y'|+ |u-u'|) |\xi|
\end{equation}
for every $(y,u,\xi), (y',u',\xi) \in X \times \mathbb R^{d \times N}$ and $b\in \mathbb{R}^m$ with $|b|\le M,$ where $M:=\Vert \eta \Vert_{L^\infty}$. And the $p-$Lipschitz continuity \eqref{plipschitz} should be replaced by
the condition
$$
\left|f^\infty(x,u,b,\xi)- f^\infty(x,u,b',\xi')\right|\leq \beta(M,n,m,N)\left((1+ |\xi|+ |\xi'|)\, |b-b'|+ |\xi-\xi'|\right),
$$
for every $(x,u,\xi)$ and $(x,u,\xi')$ in $\Omega\times\mathbb{R}^d\times\mathbb{R}^{d\times N}$ and $b,b'\in\mathbb{R}^m$ with $|b|,|b'|\le M$.\end{proof}

\begin{remark}\label{Remark2.17FMr}
If $f$ does not depend on $u$, i.e. $f \equiv f(x,v,\nabla u)$, then $K_p$ and  $K_\infty$ coincide with the  recession functions $f^\infty_p$ and $f^\infty$, respectively.
Namely, for every $(x,b,c,d,\nu)\in \Omega \times \mathbb R^d\times\mathbb R^d \times \mathbb R^m\times S^{N-1}$,
\begin{equation*}
K_p(x,b,c,d,\nu)= f^\infty_p(x, b,(c-d)\otimes \nu),
\end{equation*}
and
\begin{equation}
\label{5.84AFPinfty}
K_\infty(x,b,c,d,\nu)= f^\infty(x, b,(c-d)\otimes \nu).
\end{equation}
To obtain the above formulas, we refer to the arguments used to prove \cite[formula (5.83)]{AFP}.
From Proposition \ref{Kindependentofv}, \eqref{5.84AFPinfty} becomes
$$
\displaystyle{K_\infty(x,b,c,d,\nu)= f^\infty(x, b,(c-d)\otimes \nu)=f^\infty(x,0,(c-d)\otimes \nu).}
$$
We observe that the latter equality, in the above formula, was already proven in \cite{FKP1}.

We underline that, also when $f$ exhibits explicit dependence on $u$, there is coincidence between $K_p$ and $K_\infty$, for example consider the cases $f(x,u, b,\xi):= g(x, u)\sqrt{|b|^{2 p} + |\xi|^2}$, with $g$ suitably chosen in order to satisfy assumptions $(H_1)_p -(H_3)_p$, or $f(x,u, b,\xi):=\sqrt{|b|^{2p}+|(u, \xi)|^2}$.
\end{remark}

The following approximation result will be used to prove the upper bound inequality in Theorem \ref{MainResultinfty}.
\begin{proposition}\label{PropKr} Let $f:\Omega\times \mathbb{R}^d\times\mathbb{R}^m\times\mathbb{R}^{d\times N}\longrightarrow [0,+\infty)$ be a continuous function, and let $f^\infty$ be as in \eqref{recession}. Fix $r\ge 0$ and let $K_r:\Omega\times\mathbb R^m \times \mathbb{R}^d\times\mathbb{R}^d\times S^{N-1}\longrightarrow [0,+\infty)$ be such that
\begin{equation*}
\begin{array}{l}\displaystyle{K_r(x,b,c,d,\nu):=\inf\left\{\int_{Q_\nu}f^\infty(x,w(y),\eta(y),\nabla w(y))\,dy:\ w\in\mathcal{A}(c,d,\nu),\ \eta\in L^\infty(Q_\nu;\mathbb{R}^m),\right.}\vspace{0.1cm}\\ \hspace{9cm}\displaystyle{\left.\Vert\eta\Vert_{L^\infty(Q_\nu)}\le |b|+r,\ \int_{Q_\nu} \eta\,dy=b\right\}}.\end{array}
\end{equation*}
Then, for each $(x,b,c,d,\nu)$,
\begin{equation}\label{KKr}
K_\infty(x,b,c,d,\nu)=\lim_{r\rightarrow +\infty}K_r(x,b,c,d,\nu)=\inf_{r\ge 0}K_r(x,b,c,d,\nu).
\end{equation}
\end{proposition}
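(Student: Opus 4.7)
The plan is to establish the two chains of inequalities $K_\infty \le \lim_{r\to+\infty}K_r$ and $\inf_{r\ge 0}K_r \le K_\infty$ and then use monotonicity of $r\mapsto K_r$ to identify the limit with the infimum. No delicate construction is needed; everything rests on comparing the admissibility classes.

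First I would observe the easy direction. For each $r\ge 0$, the set of pairs $(w,\eta)$ admissible in the definition of $K_r(x,b,c,d,\nu)$ is contained in the set admissible in the definition of $K_\infty(x,b,c,d,\nu)$, because $K_r$ imposes the extra constraint $\Vert\eta\Vert_{L^\infty(Q_\nu)}\le |b|+r$. Passing to the infimum therefore gives $K_r(x,b,c,d,\nu)\ge K_\infty(x,b,c,d,\nu)$ for every $r\ge 0$. Moreover, if $0\le r_1\le r_2$, then the admissibility class for $K_{r_1}$ is contained in the one for $K_{r_2}$, so $K_{r_1}\ge K_{r_2}$. Hence $r\mapsto K_r$ is non-increasing, which means $\inf_{r\ge 0}K_r=\lim_{r\to+\infty}K_r$, and in particular $\lim_{r\to+\infty}K_r\ge K_\infty(x,b,c,d,\nu)$.

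For the reverse inequality, fix $\varepsilon>0$ and choose an admissible pair $(w,\eta)\in\mathcal{A}(c,d,\nu)\times L^\infty(Q_\nu;\mathbb{R}^m)$ with $\int_{Q_\nu}\eta\,dy=b$ and
\begin{equation*}
\int_{Q_\nu}f^\infty(x,w(y),\eta(y),\nabla w(y))\,dy\le K_\infty(x,b,c,d,\nu)+\varepsilon.
\end{equation*}
Since $\eta\in L^\infty(Q_\nu;\mathbb{R}^m)$, the number $r_0:=\max\{0,\Vert\eta\Vert_{L^\infty(Q_\nu)}-|b|\}$ is finite, and for every $r\ge r_0$ the same pair $(w,\eta)$ satisfies the extra constraint $\Vert\eta\Vert_{L^\infty(Q_\nu)}\le |b|+r$ and is therefore admissible in the definition of $K_r$. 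Consequently
\begin{equation*}
K_r(x,b,c,d,\nu)\le\int_{Q_\nu}f^\infty(x,w(y),\eta(y),\nabla w(y))\,dy\le K_\infty(x,b,c,d,\nu)+\varepsilon
\end{equation*}
for all $r\ge r_0$. Letting first $r\to+\infty$ and then $\varepsilon\to 0^+$ yields $\lim_{r\to+\infty}K_r(x,b,c,d,\nu)\le K_\infty(x,b,c,d,\nu)$. Combining this with the first step proves \eqref{KKr}.

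There is no serious obstacle here: the only thing to notice is that the $L^\infty$-bound built into the definition of $K_r$ is automatically satisfied by any competitor of $K_\infty$ as soon as $r$ exceeds $\Vert\eta\Vert_{L^\infty}-|b|$, so the truncation at level $|b|+r$ is vacuous in the limit. Thus the approximation of $K_\infty$ by $K_r$ is a purely set-theoretic statement about the admissible classes, not a quantitative truncation lemma.
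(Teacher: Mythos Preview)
Your proof is correct and follows essentially the same approach as the paper: both use the monotonicity of $r\mapsto K_r$ to identify the limit with the infimum, the inclusion of admissibility classes for the inequality $K_\infty\le K_r$, and for the reverse inequality pick a near-optimal competitor $(w,\eta)$ for $K_\infty$ and observe that it becomes admissible for $K_r$ once $r\ge \Vert\eta\Vert_{L^\infty}-|b|$. The only cosmetic difference is that the paper indexes by $1/n$ where you use $\varepsilon$.
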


\begin{proof}[Proof]
The fact that, $K_r(x,b,c,d,\nu)$ is decreasing in $r$ gives the last identity. Moreover, $K_\infty(x,b,c,d,\nu)\le K_r(x,b,c,d,\nu)$ for any $r$, therefore it is enough to find $r_n$ such that $\lim_{n\rightarrow +\infty} K_{r_n}(x,b,c,d,\nu)=K_\infty(x,b,c,d,\nu)$. By definition of $K_\infty$, given $n\in\mathbb{N}$ we can get $\omega_n\in\mathcal{A}(c,d,\nu)$, and $\eta_n\in L^\infty(Q_\nu;\mathbb{R}^m)$ with $\int_{Q_\nu}\eta_n(y)\,dy=b$ and such that
$$K_\infty(x,b,c,d,\nu)+\frac{1}{n}>\int_{Q_\nu}f^\infty(x,\omega_n(y),\eta_n(y),\nabla\omega_n(y))\,dy.$$ Setting $r_n:=\left\Vert \eta_n \right\Vert_{L^\infty}-|v|$ we get

$$
K_\infty(x,b,c,d,\nu)+\frac{1}{n}\ge K_{r_n}(x,b,c,d,\nu)\ge K_\infty(x,b,c,d,\nu)
$$ which yields the desired condition by letting $n\rightarrow +\infty$.\end{proof}

\begin{remark}
\label{Kre}
 
Notice that, for  all $x_0 \in \Omega$  and all $\varepsilon >0$ there exists $\delta>0$ such that $|x-x_0|<\delta$ implies the existence of a suitable constant $C_{|b|+r}$ for which
\begin{equation}\label{ucKr}
\displaystyle{|K_r(x,b,c,d,\nu)-K_r(x_0,b,c,d,\nu)|\leq \varepsilon C_{|b|+r}(1+ |d-c|)},
\end{equation}
for every $b \in \mathbb R^m$, $c,d,\in\mathbb R^d, \nu \in S^{N-1}$.

We also observe that arguments entirely similar to those in Proposition \ref{Lemma2.15FMinfty} guarantee that
\begin{equation*}
K_r(x,b,c,d,\nu)\leq C_{|b|+r}|c-d|,
\end{equation*}
for every $(x,b,c,d,\nu)\in \Omega\times \mathbb R^m\times \mathbb R^d \times \mathbb R^d \times S^{N-1}$.
\end{remark}

\section{Main Results: $BV\times L^p, 1<p<\infty$}\label{MainLp}
In this section we prove Theorem \ref{MainResultp}. 

\subsection{Lower semicontinuity in $BV\times L^{p}$}

\begin{theorem}\label{lowerboundL_pthm}
Let $f:\Omega
\times\mathbb{R}^{d}\times\mathbb{R}^{m}\times\mathbb{R}^{d\times
N}\rightarrow[ 0,+\infty) $ be a continuous function satisfying $(H_0)$, $%
( H_{1}) _{p}-( H_{3}) _{p}$. Then 
\begin{align}
\underset{n\rightarrow+\infty}{\lim\inf}\int_{\Omega}f(x,u_{n},v_{n},\nabla u_{n}) dx &
\geq\int_{\Omega}f(x,u,v,\nabla
u) dx  +\int_{J_{u}}K_p( x,0,u^{+},u^{-}
,\nu_{u}) d\mathcal{H}^{N-1} \label{lowerboundLp}\\
& +\int_{\Omega}f_p^{\infty}(x,u,0,\tfrac{dD^{c}u}{%
d\vert D^{c}u\vert }) d\vert
D^{c}u\vert  \notag
\end{align}
in $BV(\Omega;\mathbb{R}^{d}) \times L^{p}(\Omega ;\mathbb{%
R}^{m}) $ with respect to the ($L^{1}-$strong$~\times~L^{p}-$%
weak)$-$convergence,
where $K_p$ is given by \eqref{Kp} and $f^\infty_p$ is the $(p,1)-$ \emph{recession function} given by \eqref{recessionp}.
\end{theorem}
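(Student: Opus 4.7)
Following the blow-up method of \cite{FM2}, assume without loss of generality that the liminf in \eqref{lowerboundLp} is a finite limit and, up to a subsequence, that the nonnegative Radon measures $\mu_n := f(\cdot, u_n, v_n, \nabla u_n)\mathcal{L}^N\lfloor\Omega$ converge weakly-$\ast$ in $\mathcal{M}(\Omega)$ to some $\mu\geq 0$. Decomposing $\mu$ via the Besicovitch differentiation theorem \eqref{ADMProposition2.2} against $\mathcal{L}^N$, $\mathcal{H}^{N-1}\lfloor J_u$, and $|D^c u|$, the result reduces to three pointwise lower bounds on Radon-Nikod\'ym derivatives: $\tfrac{d\mu}{d\mathcal{L}^N}(x_0)\geq f(x_0,u(x_0),v(x_0),\nabla u(x_0))$ at $\mathcal{L}^N$-a.e.\ $x_0\in\Omega\setminus J_u$; $\tfrac{d\mu}{d\mathcal{H}^{N-1}\lfloor J_u}(x_0)\geq K_p(x_0,0,u^+(x_0),u^-(x_0),\nu_u(x_0))$ at $\mathcal{H}^{N-1}$-a.e.\ $x_0\in J_u$; and $\tfrac{d\mu}{d|D^c u|}(x_0)\geq f_p^{\infty}(x_0,u(x_0),0,\tfrac{dDu}{d|D^c u|}(x_0))$ at $|D^c u|$-a.e.\ $x_0$. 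Summing then yields \eqref{lowerboundLp}.

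\textbf{Bulk part.} At a Lebesgue point $x_0\notin J_u$ of $u$, $\nabla u$ and $v$, use the cube $C=Q(x_0,\varepsilon)$ in \eqref{ADMProposition2.2}. By $(H_2)_p(1)$, replace $f(x,u_n(x),v_n(x),\nabla u_n(x))$ by $f(x_0,u(x_0),v_n(x),\nabla u_n(x))$ up to an error controlled by $\omega_K(|x-x_0|+|u_n(x)-u(x_0)|)(1+|v_n|^p+|\nabla u_n|)$, which vanishes after letting $n\to+\infty$ and then $\varepsilon\to 0^+$ thanks to the uniform $L^p\times L^1$ bounds and Proposition \ref{thm2.3BBBF} iii). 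The convex-quasiconvexity $(H_0)$ applied to $f(x_0,u(x_0),\cdot,\cdot)$ on the rescaled cube, combined with Proposition \ref{thm2.3BBBF} i) for $\nabla u$ and the weak $L^p$ convergence of $v_n$, yields the bulk lower bound.

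\textbf{Cantor part.} Choose $x_0$ with $\tfrac{dDu}{d|D^c u|}(x_0)=a\otimes\nu$ rank-one, and set $t_\varepsilon := |Du|(Q_\nu(x_0,\varepsilon))/\varepsilon^N$, so that $t_\varepsilon\to+\infty$. Rescale $w_{n,\varepsilon}(y):=(u_n(x_0+\varepsilon y)-\bar u_{n,\varepsilon})/(\varepsilon t_\varepsilon)$ and $\eta_{n,\varepsilon}(y):=t_\varepsilon^{-1/p}v_n(x_0+\varepsilon y)$. Assumption $(H_3)_p$ together with the $(p,1)$-homogeneity \eqref{p-1homogeneity} legitimizes replacing $f$ by $f_p^{\infty}$ in the rescaled energies, while $(H_2)_p(2)$ freezes $x$ to $x_0$. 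Since $t_\varepsilon\to+\infty$ the averages of $\eta_{n,\varepsilon}$ vanish in the limit, and convex-quasiconvexity of $f_p^{\infty}$ (Proposition \ref{proprecession} i)) delivers the Cantor inequality.

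\textbf{Jump part (main obstacle).} At $\mathcal{H}^{N-1}$-a.e.\ $x_0\in J_u$ with normal $\nu=\nu_u(x_0)$, blow up on $Q_\nu(x_0,\varepsilon)$ setting $w_{n,\varepsilon}(y):=u_n(x_0+\varepsilon y)$ and $\eta_{n,\varepsilon}(y):=\varepsilon^{1/p}v_n(x_0+\varepsilon y)$. Via a diagonal extraction using \eqref{jump}, one obtains $w_j\to u_0$ in $L^1(Q_\nu;\mathbb{R}^d)$ (with $u_0$ the step function with values $u^\pm(x_0)$) while $\eta_j$ remains bounded in $L^p$ with $\int_{Q_\nu}\eta_j\,dy\to 0$ (the factor $\varepsilon^{1/p}$ kills the average). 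Apply Lemma \ref{3.1FMr} with $b=0$, $c=u^+(x_0)$, $d=u^-(x_0)$ and $\nu=\nu_u(x_0)$ to replace $(w_j,\eta_j)$, without increasing the energy liminf, by competitors $(\xi_j,\bar v_j)\in\mathcal{A}(u^+,u^-,\nu)\times L^p(Q_\nu;\mathbb{R}^m)$ with $\int_{Q_\nu}\bar v_j\,dy=0$. Using $(H_2)_p(2)$ and $(H_3)_p$ together with the homogeneity \eqref{p-1homogeneity}, replace $f$ by $f_p^{\infty}(x_0,\cdot,\cdot,\cdot)$ in the rescaled integrals, and finally take the infimum over admissible pairs to identify the surface density with $K_p$ as in \eqref{Kp}. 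The technical heart of the proof lies precisely in this step: because $v$ admits no pointwise trace on $J_u$, the density $K_p$ must incorporate an auxiliary field $\eta$ with prescribed zero average, and Lemma \ref{3.1FMr} is the bridge between arbitrary diagonal blow-up sequences and admissible competitors of this kind.
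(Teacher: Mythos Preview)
Your overall blow-up framework and the decomposition into bulk, jump, and Cantor parts match the paper exactly, and your bulk sketch is fine. However, there are two issues.

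\textbf{Jump part: ordering.} You propose to apply Lemma~\ref{3.1FMr} to the diagonal pair $(w_j,\eta_j)$ \emph{before} passing to $f_p^\infty$. But Lemma~\ref{3.1FMr} requires a fixed integrand with growth \eqref{growthp}, whereas the rescaled energies involve the $k$-dependent family $\varepsilon_k f(x_0+\varepsilon_k y,\cdot,\varepsilon_k^{-1/p}\cdot,\varepsilon_k^{-1}\cdot)$. The paper resolves this by first using Proposition~\ref{proprecession}~iii) and $(H_3)_p$ to replace the rescaled energies by $\int_Q f_p^\infty(x_0,\bar u_k,\bar v_k,\nabla\bar u_k)\,dy$ (a fixed integrand satisfying \eqref{finftypgrowth}), and \emph{then} invokes Lemma~\ref{3.1FMr} on $f_p^\infty(x_0,\cdot,\cdot,\cdot)$. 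Swap the order and your argument goes through.

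\textbf{Cantor part: genuine gap.} The sentence ``convex-quasiconvexity of $f_p^\infty$ \dots\ delivers the Cantor inequality'' hides the entire difficulty. Two obstructions must be overcome before any convex-quasiconvexity inequality can be invoked. First, $f$ (and $f_p^\infty$) depends on the $u$-variable, and along the diagonal sequence $u_{n(k)}(x_0+\varepsilon_k\,\cdot)$ is only $L^1$-close to $u(x_0)$; to freeze $u$ to $u(x_0)$ via $(H_2)_p$ one needs an $L^\infty$-truncation of the type $\tilde u_k=a_k+\varphi_{r,s}(\cdots)(\bar u_k-a_k)$ (the paper's Step~2), with careful control of the energy on the bad set. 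Second, convex-quasiconvexity requires test pairs of the form $(b+\eta,\xi+\nabla\varphi)$ with $\eta$ of zero average and $\varphi$ periodic; the rescaled sequences have no such structure on $\partial Q$. The paper builds this in Step~3 by averaging $\hat u_k$ in the $x'$-variables (using Alberti's rank-one theorem to show that $D_{x'}\hat u_k\to 0$) to produce a one-dimensional profile $\xi_k$, and then by a slicing/cut-off argument glues $\bar w_k$ to $\xi_k$ near $\partial(\tau Q)$ while simultaneously modifying $V_k$ to have zero average. Only after these modifications can convex-quasiconvexity be applied (Step~4), and notably the paper applies it to $f$ itself, not to $f_p^\infty$; the recession function emerges at the end from letting $\theta_k\to+\infty$. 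Your sketch omits all of this machinery, which is the longest and most delicate part of the proof.
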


\begin{proof}
Using the same arguments as in \cite[Theorem II.4]{AF} and \cite[Proposition 2.4]{FM1} we may reduce to $u_n\in C_{0}^{\infty}( \mathbb{R}%
^{N};\mathbb R ^d) $ and $v_{n}\in C_{0}^{\infty}( \mathbb{R}%
^{N};\mathbb R ^ m).$ Due to $( H_{1}) _{p}$ we may assume, without loss of generality,
that

\begin{equation*}
\underset{n\rightarrow+\infty}{\lim\inf}\int_{\Omega}f( x,u_{n},v_{n},\nabla u_{n})
dx=\lim_{n\rightarrow+\infty}\int_{\Omega}f( x,u_{n},v_{n},\nabla u_{n}) dx<+\infty. 
\end{equation*}

Hence, up to a subsequence, $\mu_{n}:=f( x,u_{n}
,v_{n},\nabla u_{n}) \mathcal{L}^{N}%
\overset{\ast}{\rightharpoonup}\mu$ in the sense of measures for some
positive Radon measure $\mu.$ By the Radon-Nikod\'{y}m theorem we can decompose $%
\mu$ as a sum of four mutually nonnegative measures, namely, $\mu=\mu_{a}\mathcal{L}^{N}+\mu_{j}\mathcal{H}^{N-1}\lfloor J_{u}+\mu_{c}\vert D^{c}u\vert +\mu_{s}.$

By Besicovitch derivation theorem 
\begin{align*}
\mu_{a}( x_{0}) & =\lim_{\varepsilon\rightarrow0^{+}}\frac {%
\mu( B(x_{0},\varepsilon)) }{\mathcal{L}^{N}(
B( x_{0},\varepsilon)) }<+\infty,~\text{for }\mathcal{L}%
^{N}-\text{a.e. }x_{0}\in\Omega,  \notag \\
\mu_{j}( x_{0}) & =\lim_{\varepsilon\rightarrow0^{+}}\frac {%
\mu (Q_{\nu}( x_{0},\varepsilon))}{\mathcal{H}%
^{N-1}( Q_{\nu}( x_{0},\varepsilon) \cap J_{u}) }%
<+\infty,~\text{for }\mathcal{H}^{N-1}-\text{a.e. }x_{0}\in J_{u}\cap \Omega,\\
\mu_{c}( x_{0}) & =\lim_{\varepsilon\rightarrow0^{+}}\frac {%
\mu( Q( x_{0},\varepsilon)) }{\vert
Du\vert ( Q( x_{0},\varepsilon)) }<+\infty,~%
\text{for }\vert D^{c}u\vert -\text{a.e. }x_{0}\in \Omega.  \notag
\end{align*}
We claim that 
\begin{align}
\mu_{a}( x_{0}) & \geq f( x_{0},u( x_{0})
,v( x_{0}) ,\nabla u( x_{0})) ,~\ \ \ \ \text{ for }\mathcal{L}^{N}-\text{a.e. }x_{0}\in\Omega , \label{lowerboundbulk}\\
\mu_{j}( x_{0}) & \geq K_p( x_{0},0,u^{+}( x_{0})
,u^{-}( x_{0}) ,\nu_{u}( x_{0})) ,~\ \ \ 
\text{for }\mathcal{H}^{N-1}-\text{a.e. }x_{0}\in J_{u}\cap\Omega,
\label{lowerboundjump}\\
\mu_{c}( x_{0}) & \geq f_p^{\infty}( x_{0},u(x_{0}) ,0,\frac{dD^{c}u}{d\vert D^{c}u\vert }(
x_{0})) \text{, \;for }\vert D^{c}u\vert -\text{a.e.~%
}x_{0}\in\Omega.  \label{lowerboundcantor}
\end{align}
If $(\ref{lowerboundbulk})-(\ref{lowerboundcantor}) 
$ hold then $(\ref{lowerboundLp}) $ follows immediately. Indeed,
since $\mu_{n}\overset{\ast}{\rightharpoonup}\mu$ in the sense of measures
then%
\begin{align*}
\underset{n\rightarrow+\infty}{\lim\inf}\int_{\Omega}f( x,u_{n},v_{n},\nabla u_{n}) dx&\geq
\underset{n\rightarrow+\infty}{\lim\inf}\mu_{n}( \Omega)
\geq\mu( \Omega)\geq\int_{\Omega}\mu_{a}dx+\int_{J_{u}}\mu_{j}d\mathcal{H}%
^{N-1}+\int_{\Omega}\mu_{c}d\vert D^{c}u\vert \\
& \geq\int_{\Omega}f(x,u,v,\nabla
u) dx+\int_{J_{u}}K_p( x,0,u^{+},u^{-},\nu_{u}) d\mathcal{H}%
^{N-1} \\
& +\int_{\Omega}f_p^{\infty}\left( x,u,0,\frac{dD^{c}u}{%
d\vert D^{c}u\vert }\right) d\vert
D^{c}u\vert ,
\end{align*}
where we have used the fact that $\mu_{s}$ is nonnegative.

We prove $(\ref{lowerboundbulk})-(\ref{lowerboundcantor})$ using the blow up method introduced in \cite{FM1}.

\noindent \textbf{Bulk part.} Inequality \eqref{lowerboundbulk} is obtained as in \cite{RZ} Section 3 and \cite{RZerr}.

\smallskip
\noindent\textbf{Jump part.} Consider $x_{0}\in J_{u}$, then there exist $u^-(x_0), u^+(x_0) \in \mathbb R^d$ and $\nu:=\nu_u(x_0)\in S^{N-1}$ such that \eqref%
{jump} holds,
\begin{equation*}
\mu_{j}( x_{0}) =\lim_{\varepsilon\rightarrow0^{+}}\frac {%
\mu( Q_{\nu}(x_{0},\varepsilon)) }{\vert
u^{+}-u^{-}\vert \mathcal{H}^{N-1}\lfloor J_{u}( Q_{\nu}(
x_{0},\varepsilon)) }\in\mathbb{R}
\end{equation*}
and assume $\mu( \partial Q_{\nu}( x_{0},\varepsilon_{k})
) =0$ for $\{ \varepsilon_{k}\} \searrow0^+.$
Moreover, for ${\cal H}^{N-1}\lfloor J_u-$ a.e. $x_0$, we may assume
\begin{align}
\displaystyle{\frac{1}{\vert u^{+}( x_{0}) -u^{-}( x_{0})
\vert }\lim_{k\to +\infty}\frac{1}{\varepsilon_k^{N-1}}\int_{Q_\nu(x_0+ \varepsilon_k)} |v(x)|^pdx=0.}\label{0derivativevp}
\end{align}
Then
\begin{align}
\mu _{j}( x_{0}) & =\displaystyle{\lim_{k\rightarrow +\infty }\frac{\mu(
Q_{\nu }( x_{0},\varepsilon _{k})) }{\vert
u^{+}-u^{-}\vert \mathcal{H}^{N-1}\lfloor J_{u}( Q_{\nu }(
x_{0},\varepsilon _{k})) }  } \nonumber \\
&\displaystyle{\geq \frac{1}{\vert u^{+}( x_{0}) -u^{-}(
x_{0})\vert }\lim_{k\rightarrow +\infty }\lim_{n\rightarrow
+\infty }\frac{1}{\varepsilon _{k}^{N-1}}\int_{Q_\nu(x_0+ \varepsilon_k)}f( x,u_{n}( x) ,v_{n}(x) ,\nabla
u_{n}(x)) dx } \label{mujestimate1}
\\
 &\displaystyle{=\frac{1}{\vert u^{+}(x_{0}) -u^{-}( x_{0})
\vert }\lim_{k\rightarrow +\infty }\lim_{n\rightarrow +\infty
}\int_{Q_\nu}\varepsilon _{k}f(x_{0}+\varepsilon _{k}y,u_{n}(
x_{0}+\varepsilon _{k}y) ,v_{n}(x_{0}+\varepsilon _{k}y)
,\nabla u_{n}( x_{0}+\varepsilon _{k}y)) dy} \nonumber\\
 &\displaystyle{=\frac{1}{\vert u^{+}( x_{0}) -u^{-}( x_{0})
\vert }\lim_{k\rightarrow +\infty }\lim_{n\rightarrow +\infty
}\int_{Q_\nu}\varepsilon _{k}f(x_{0}+\varepsilon _{k}y,u_{n,k}(
y) ,\varepsilon _{k}^{-\frac{1}{p}}v_{n,k}(y) ,\frac{1}{%
\varepsilon _{k}}\nabla u_{n,k}(y)) dy,} \nonumber
\end{align}
where%
\begin{equation*}
u_{n,k}(y) :=u_{n}( x_{0}+\varepsilon _{k}y) ,~\ \ \
\ v_{n,k}(y) :=\varepsilon _{k}^{\frac{1}{p}}v_{n}(
x_{0}+\varepsilon _{k}y) .
\end{equation*}%

We observe that, 
\begin{equation}
\label{ulimit}
\displaystyle{\lim_{k \to +\infty}\lim_{n\to +\infty}\|u_{n,k}(y)- u_0\|_{L^1(Q;\mathbb R^d)}= 0},
\end{equation}
with
\begin{equation}
\label{u0}
u_0(y):=\left\{
\begin{array}{ll}
u^+(x_0) &\hbox{ if }y \cdot \nu > 0,\\
u^-(x_0) &\hbox{ if }y \cdot \nu \leq 0,
\end{array}
\right.
\end{equation}
and for every $\varphi \in L^q(Q;\mathbb R^m)$, 
\begin{equation}\label{doubleweaklimit}
\begin{array}{lll}
\displaystyle{\lim_{k \to+\infty}\lim_{n\to +\infty}\int_Q v_{n,k}(y)\varphi (y)dy=\lim_{k \to +\infty}\varepsilon_k^{\frac{1}{p}} \int_Q v(x_0+ \varepsilon_k y)\varphi(y)dy=0,} 
\end{array}
\end{equation}
where the latter equality is obtained from \eqref{0derivativevp}.

Using the separability of $L^q(Q;\mathbb R^m)$, together with a diagonalization argument, from \eqref{mujestimate}, \eqref{ulimit}  and \eqref{doubleweaklimit}, we obtain the existence of sequences $\bar{u}_k:=u_{n(k),k}$ and $\bar{v}_k:=v_{n(k),k}$ such that $\bar{u}_k \to u_0$ in $L^1(Q;\mathbb R^d)$, $\bar{v}_k \rightharpoonup 0$ in $L^p(Q;\mathbb R^m)$, and we obtain the following estimation for $\mu _{j}$ in terms of $f^\infty_p$
\begin{align}
\mu _{j}( x_{0})&\geq \frac{1}{\vert u^{+}(
x_{0}) -u^{-}(x_{0})\vert }\lim_{k\rightarrow
+\infty }\left \{\int_{Q}f_p^{\infty }(
x_{0},\bar{u}_{k},\bar{v}_{k},\nabla \bar{u}_{k})dy \right. \notag\\
&+\int_{Q} f_p^{\infty }(x_{0}+\varepsilon _{k}y,\bar{u}_{k},\bar{v}_{k},\nabla \bar{u}_{k})
-f_p^{\infty }( x_{0},\bar{u}_k,\bar{v}_{k}
,\nabla \bar{u}_{k}) dy \label{mujestimate}\\
&+\left. \int_{Q} \varepsilon_{k}f (x_{0}+\varepsilon
_{k}y,\bar{u}_{k},\varepsilon_{k}^{-\frac{1}{p}}\bar{v}_{k},\tfrac{1}{\varepsilon _{k}}\nabla \bar{u}_{k})
-f_p^{\infty }(x_{0}+\varepsilon _{k}y,\bar{u}_{k}
,\bar{v}_{k},\nabla \bar{u}_k)
dy\right \}.\notag
\end{align}
From Proposition \ref{proprecession} iii) we get that for any $\varepsilon
>0,$ if $k$ is sufficiently large%
\begin{align*}
&\int_{Q}f_p^{\infty }( x_{0}+\varepsilon_{k}y,\bar{u}_{k},\bar{v}_{k},\nabla \bar{u}_{k})
-f_p^{\infty}(x_{0},\bar{u}_{k},\bar{v}_{k},\nabla \bar{u}_{k}) dy\\
&\geq -\varepsilon \int_{Q} \vert \bar{v}_{k}\vert
^{p}+\vert \nabla \bar{u}_{k}\vert dy
=-\varepsilon \int_{Q}\varepsilon _{k}(\vert v_{k}(
x_{0}+\varepsilon _{k}y)\vert ^{p}+\vert \nabla
u_k( x_{0}+\varepsilon _{k}y)\vert) dy\geq O(\varepsilon). 
\end{align*}
On the other hand, using $( H_{3})_{p}$ and H\"{o}lder inequality we
get%
\begin{align*}
& \int_{Q} \varepsilon _{k}f(x_{0}+\varepsilon
_{k}y,\bar{u}_{k},\varepsilon _{k}^{-\tfrac{1}{p}}\bar{v}_{k},\tfrac{1}{\varepsilon _{k}}\nabla \bar{u}_{k})
-f_p^{\infty}(x_{0}+\varepsilon _{k}y,\bar{u}_{k}
,\bar{v}_{k},\nabla \bar{u}_{k}) dy \\
 &\leq c'\int_{\{ y\in Q: \tfrac{\vert\nabla
\bar{u}_{k}\vert}{\varepsilon _{k}}+\tfrac{|\bar{v}_{k}|^p}{\e_k} \geq L\}}(\varepsilon _{k}^{\tau
}\vert \bar{v}_{k}\vert ^{(1-\tau)p} +\vert\nabla \bar{u}_{k}\vert ^{1-\tau }\varepsilon _{k}^{\tau})dy+C\int_{\{ y\in Q:\tfrac{\vert\nabla
\bar{u}_{k}\vert}{\varepsilon _{k}}+\tfrac{|\bar{v}_{k}|^p}{\e_k} < L \} }(\vert \bar{v}_k\vert ^{p}+\vert \nabla \bar{u}_k\vert) dy \\
 &\leq O(\varepsilon) +c'\varepsilon _{k}^{\tau
}\left(\int_{Q}\vert \nabla \bar{u}_{k}\vert
dy\right) ^{1-\tau}+c'\e_k\int_{Q}|v_k(x_0+\varepsilon_k y)|^{(1-\tau)p}dy +O( \varepsilon) \\
 &\leq O(\varepsilon)+c^{\prime }\varepsilon _{k}^{\tau
 }\left(\int_{Q}\vert \nabla \bar{u}_{k}\vert
 dy\right) ^{1-\tau}+c'\varepsilon_k^{\tau}\left(\varepsilon_k \int_Q |v_k(x_0+\varepsilon_k y)|^p dy\right)^{1-\tau}+O(\varepsilon)=O(\varepsilon),
\end{align*}%
where we have used in last equality \eqref{doubleweaklimit}. Thus we are led to 
\begin{equation}
\label{mujestimate2}
\mu _{j}( x_{0}) \geq \frac{1}{\vert u^{+}(
x_{0}) -u^{-}( x_{0}) \vert }\lim_{k\rightarrow
+\infty }\int_{Q}f_p^{\infty }(
x_{0},\bar{u}_{k},\bar{v}_{k},\nabla \bar{u}_{k})dy +O(\varepsilon ) .
\end{equation}%

\noindent Next we apply Lemma \ref{3.1FMr} to $f^\infty_p(x_0, \cdot, \cdot, \cdot)$, obtaining
\begin{equation}
\label{mujestimate3}
\displaystyle{
\lim_{k\rightarrow
+\infty }\int_{Q}f_p^{\infty }(
x_{0},\bar{u}_{k},\bar{v}_{k},\nabla \bar{u}_{k})dy \geq \limsup_{k \to +\infty}\int_{Q}f_p^{\infty }(
x_{0},\xi_{k},\zeta_{k},\nabla \xi_{k})dy,}
\end{equation}
where $\xi_k \to u_0$ in $L^1(Q;\mathbb R^d)$ and $\xi_k \in {\cal A}(u^+(x_0), u^-(x_0),\nu_u(x_0))$, $\zeta_k \rightharpoonup 0$ in $L^p(Q;\mathbb R^m)$ with
  
\noindent $\int_Q \zeta_k\,dy =0$. In particular, by \eqref{Kp} we have 
\begin{equation*}
\mu _{j}(x_{0})\geq K_p(x,0,u^{+}(x_{0}),u^{-}(x_{0}),\nu _{u}(x_{0}))\hbox{
for }\mathcal{H}^{N-1}-\hbox{a.e.}\, x_{0}\in J_{u}\cap \Omega.
\end{equation*}

\noindent \textbf{Cantor Part.} By definition,

\begin{equation}\label{eq1Cantorlower}
\mu^c(x)=\lim_{\varepsilon\rightarrow 0^+}\frac{\mu(x+\varepsilon Q)}{|Du|(x+\varepsilon Q)},\ |D^c u|-\hbox{a.e.}\ x\in\Omega.
\end{equation}
We start recalling that, by Alberti's rank-one theorem  (see \cite{A}), together with \eqref{ADMProposition2.2},
\begin{equation}\label{eq2Cantorlower}\lim_{\varepsilon \to 0^+}\frac{Du(x+\varepsilon Q)}{|Du|(x+\varepsilon Q)}=\lim_{\varepsilon \to 0^+}\frac{D^cu(x+\varepsilon Q)}{|D^cu|(x+\varepsilon Q)}=A(x),\ |D^c u|-a.e.\ x\in\Omega\end{equation}
for some rank-one matrix $A(x)$ with $|A(x)|=1$.

Since $|D^cu|(J_u)=0$ and still denoting by $u$ the approximate limit of $u$, which is defined in $\Omega\setminus J_u$, we have (cf. Definition 3.63 in \cite{AFP})
\begin{equation}\label{eq3Cantorlower}\lim_{\varepsilon\rightarrow 0^+}\frac{1}{|x+\varepsilon Q|}\int_{x+\varepsilon Q}|u(y)-u(x)| dy=0, |D^c u|-\hbox{a.e.}\ x\in\Omega.\end{equation}
Finally, by Besicovitch Derivation theorem \cite[Theorem 5.52]{AFP},
\begin{equation}\label{eq4Cantorlower}\lim_{\varepsilon\rightarrow 0^+}\frac{|Du|(x+\varepsilon Q)}{\varepsilon^{N-1}}=0,\,\,\,\,\lim_{\varepsilon\rightarrow 0^+}\frac{|Du|(x+\varepsilon Q)}{\varepsilon^{N}}=+\infty, \,\, |D^cu|-\hbox{a.e.}\,\, x \in\Omega.
\end{equation}

\noindent Let $x_0\in\Omega$ be such that (\ref{eq1Cantorlower})$-$(\ref{eq4Cantorlower}) hold. Notice that, as in Lemma 2.13 in \cite{FM2}, we can also assume that  $$\lim_{t\rightarrow 1^-}\liminf_{\varepsilon\rightarrow 0^+}\frac{|Du|((x_0+\varepsilon Q)\setminus (x_0+t\varepsilon Q))}{|Du|(x_0+\varepsilon Q)}=0,$$
so, we can write, for some continuous function $\omega:[0,1]\rightarrow \mathbb{R}$ with $\omega(0)=0$, \begin{equation}\label{eq5Cantorlower}\displaystyle{\liminf_{\varepsilon\rightarrow 0^+}\frac{|Du|((x_0+\varepsilon Q)\setminus  (x_0+t\varepsilon Q))}{|Du|(x_0+\varepsilon Q)}\le \omega(1-t)}.\end{equation}

In the sequel, without loss of generality, we assume $A:=A(x_0)=a\otimes e_N$ with $|a|=1$ and, as in \cite{FM2}, we divide the proof in several steps.
\smallskip

\noindent{\bf Step 1.} For each $0<t<1$ and $\gamma\in (t,1)$ consider $\varepsilon_k\rightarrow 0^+$ such that
\begin{equation}
\label{limitv=0}
\displaystyle{\lim_{k \to +\infty}\frac{\int_{Q_k} |v(x)|dx}{|Du|(Q_k)}= \lim_{k \to +\infty}\frac{\int_{Q_k}|v(x)|^pdx}{|Du|(Q_k)}=0, \; |D^c u|-\hbox{a.e.},}
\end{equation}
where $Q_k:=x_0+\varepsilon_kQ.$ We also observe that, from (\ref{eq2Cantorlower})
\begin{equation}\label{limitA}\displaystyle{\lim_{k\rightarrow +\infty}\frac{Du(Q_k)}{|Du|(Q_k)}=\lim_{k \to +\infty}\frac{D^cu(Q_k)}{|D^cu|(Q_k)}=A.}\end{equation}

Arguing as in \cite[Section 4]{FM2}, conditions (\ref{eq1Cantorlower}) and (\ref{eq3Cantorlower}) imply the existence of subsequences  $\{\bar{u}_k\} \subseteq \{u_n\}$, and $\{\bar{v}_k \}\subseteq \{v_n\}$, defined in $\Omega$, 
such that

\smallskip
\noindent a) $\displaystyle{\mu^c(x_0)\ge \limsup_{k\rightarrow +\infty}\frac{1}{|Du|(Q_k)}\int_{\gamma Q_k}f(x,\bar{u}_k(x),\bar{v}_k(x),\nabla \bar{u}_k(x))\,dx;}$\medskip

\noindent b) $\displaystyle{\lim_{k\rightarrow +\infty}\frac{1}{|Q_k|}\int_{Q_k}|\bar{u}_k(x)-u(x_0)|\,dx=0;}$\medskip

\noindent c) $\displaystyle{\lim_{k\rightarrow +\infty}\frac{1}{\varepsilon_k |Du|(Q_k)}\int_{Q_k}\left| \bar{u}_k(x)-u(x)-\frac{1}{|Q_k|}\int_{Q_k} (\bar{u}_k(z)-u(z))\,dz\right|\,dx=0;}$\medskip

\noindent d) $\frac{(\varepsilon_k^N)^\frac{1}{p}\bar{v}_k(x_0+ \varepsilon_k \cdot)}{|D^c u|^{\frac{1}{p}}(Q_k)}\rightharpoonup 0$ in $L^p(Q;\mathbb R^m)$ as $k\to +\infty,$ which follows from H\"{o}lder inequality and \eqref{limitv=0}.

\smallskip
\noindent{\bf Step 2.} In this step we will obtain an estimate for $\mu^c(x_0)$ similar to condition a), fixing on $f$ the value of $x$ and $u$. Precisely, we prove that there is $n_0\in\mathbb{N}$ such that, for each $n\ge n_0$ there exist $\{\tilde{u}_k\}\subset W^{1,1}(\Omega;\mathbb{R}^d)$, $\{\tilde{v}_k\} \subset L^p(\Omega;\mathbb R^m)$, and $\{a_k\}\subset \mathbb R$ such that $a_k\rightarrow u(x_0)$, ${\tilde v}_k \rightharpoonup v$ in $L^p(\Omega;\mathbb R^m)$ as $k\rightarrow +\infty,$  $\Vert\tilde{u}_k-u(x_0)\Vert_{L^\infty}\le 1/n$ and
\begin{equation}\label{eqwithtildeu}
(1+\omega_{K}(\tfrac{1}{n}))\,\mu^c(x_0)\ge\limsup_{k\rightarrow +\infty}\frac{1}{|Du|(Q_k)}\int_{\gamma Q_k}f(x_0,u(x_0),\tilde{v}_k(y),\nabla\tilde{u}_k(y))\,dy,\end{equation}
where $\omega_K$ is the function in $(H_2)_p$, related to a compact set $K \subset \Omega\times\mathbb{R}^d$ containing $(x_0,u(x_0))$, and the estimate does not depend on $k$. We also prove that 
\begin{equation}\label{secondeqwithak}\lim_{k\rightarrow +\infty}\frac{1}{\varepsilon_k |Du|(Q_k)}\int_{Q_k}\left|\tilde{u}_k(y)-a_k-\left(u(y)-\frac{1}{|Q_k|}\int_{Q_k}u(z)\,dz  \right)\right|\,dy=0.\end{equation}

Observe that by (\ref{eq3Cantorlower}) and condition b) above, we can assume
$$\frac{1}{|Q_k|}\int_{Q_k}|u(y)-u(x_0)|\,dy\le\frac{1}{n^2}\text{\quad and\quad }\frac{1}{|Q_k|}\int_{Q_k}|\bar{u}_k(y)-u(x_0)|\,dy\le\frac{1}{n^2}.$$
Then let $a_k:=\frac{1}{|Q_k|}\int_{Q_k}\bar{u}_k(y)\,dy$. Clearly, from condition b) above, $a_k\rightarrow u(x_0)$. To define $\tilde{u}_k$ we start by considering a family of smooth cut-off functions $\varphi_{r,s}:\mathbb{R}\rightarrow [0,1]$ such that 
\begin{equation}\label{varphirs}
\varphi_{r,s}(t):=\left\{\begin{array}{ccc}1 &  & \text{if }t\leq r, \\ 
0 &  & \text{if }t\geq s,\end{array}\right.
\end{equation} and $\Vert\varphi'_{r,s}\Vert_{L^\infty}\le \frac{c}{s-r}$ for $\frac{2}{n^2}\le r<s \le\frac{1}{2n}$. 
Consider the sequence $\{\tau_{L_k}({\bar v}_k)\}$ of $p$-equi-integrable functions derived from $\{{\bar v}_k\}$, as in \cite[Lemma 8.13]{FL}. Then, for every $\lambda \in (0,+\infty)$ define two families of sequences  $$\tilde{u}_k^{r,s,\lambda}:=a_k+\varphi_{r,s}(|\bar{u}_k-a_k| + \tfrac{|\tau_{L_k}({\bar v}_k)- {\bar v}_k|}{\lambda})(\bar{u}_k-a_k),
$$
$$
\tilde{v}^{r,s,\lambda}_k:= \tau_{L_k}({\bar v}_k)+ \varphi_{r,s}(|\bar{u}_k-a_k| + \tfrac{|\tau_{L_k}(\bar{v}_k)- \bar{v}_k|}{\lambda})(\bar{v}_k -\tau_{L_k}(\bar{v}_k)).
$$ 
Notice that, since $a_k\rightarrow u(x_0)$, for sufficiently large $k$ and independently of $r$, $s$ and $\lambda$, $\Vert\tilde{u}_k^{r,s}-u(x_0)\Vert_{L^\infty}\le 1/n$,  $v_k^{r,s,\lambda}\rightharpoonup v$ in $L^p(\Omega;\mathbb R^m)$ and satisfies $d)$ as $k\to+\infty$. The sequences $\{\tilde{u}_k\}$ and $\{\bar{v}_k\}$ will be chosen among the sequences of the previous family for convenient $r, s$ and $\lambda$. In order to make that choice we start doing some estimates. Using hypothesis $(H_2)_p$, for some compact set  $K$ containing $(x_0,u(x_0))$ and $(y,\tilde{u}_k^{r,s,\lambda})$ for $y\in \gamma Q_k$,
\begin{align}
&\int_{\gamma Q_k}f(x_0,u(x_0),\bar{v}^{r,s,\lambda}_k,\nabla\tilde{u}_k^{r,s,\lambda})\,dy\nonumber\\
&=\int_{\gamma Q_k}f(x_0,u(x_0),\tilde{v}^{r,s,\lambda}_k,\nabla\tilde{u}_k^{r,s,\lambda})-f(y,\tilde{u}_k^{r,s,\lambda},\tilde{v}^{r,s,\lambda}_k,\nabla\tilde{u}_k^{r,s,\lambda})dy+\int_{\gamma Q_k}f(y,\tilde{u}_k^{r,s,\lambda},\tilde{v}^{r,s,\lambda}_k,\nabla\tilde{u}_k^{r,s,\lambda})\,dy\nonumber\\
&\le\int_{\gamma Q_k}\omega(|y-x_0|+|\tilde{u}_k^{r,s,\lambda}-u(x_0)|)(1+|\nabla\tilde{u}_k^{r,s,\lambda}|+ |\tilde{v}^{r,s,\lambda}_k|^p)\,dy+\int_{\gamma Q_k}f(y,\tilde{u}_k^{r,s,\lambda},\tilde{v}^{r,s,\lambda}_k,\nabla\tilde{u}_k^{r,s,\lambda})\,dy\nonumber\\
&\le \int_{\gamma Q_k}\omega (\gamma\varepsilon_k+1/n)(1+|\nabla\tilde{u}_k^{r,s,\lambda}|+ |\tilde{v}_k^{r,s,\lambda}|^p)\,dy +\int_{\gamma Q_k}f(y,\tilde{u}_k^{r,s,\lambda},\tilde{v}^{r,s,\lambda}_k,\nabla\tilde{u}_k^{r,s,\lambda})\,dy.\label{estimatewithfixedxu}
\end{align}
Using hypothesis $(H_1)_p$ and for sufficiently large $n$ we can get the estimate
$$
\begin{array}{ll}\displaystyle{\int_{\gamma Q_k}(|\nabla\tilde{u}_k^{r,s,\lambda}| + |\tilde{v}^{r,s,\lambda}_k|^p)\,dy\le
c\int_{\gamma Q_k}f(y,\tilde{u}_k^{r,s,\lambda},\tilde{v}^{r,s,\lambda}_k,\nabla\tilde{u}_k^{r,s,\lambda})\,dy.}
\end{array}
$$
Recalling that $\frac{\varepsilon_k^N}{|Du|(Q_k)}\rightarrow 0$ (see \eqref{eq4Cantorlower}), to estimate \eqref{estimatewithfixedxu} we are left with
\begin{align*}
&\displaystyle{\frac{1}{|Du|(Q_k)}\int_{\gamma Q_k}f(y,\tilde{u}_k^{r,s,\lambda},\tilde{v}^{r,s,\lambda}_k,\nabla\tilde{u}_k^{r,s,\lambda})\,dy}\\
&\le \displaystyle{\frac{1}{|Du|(Q_k)}\left\{\int_{\gamma Q_k}f(y,\bar{u}_k,\bar{v}_k,\nabla \bar{u}_k)\,dy\left.+\int_{\gamma Q_k\cap\{|\bar{u}_k-a_k|+ \tfrac{|\tau_{L_k}({\bar v}_k)-{\bar v}_k|}{\lambda}\ge s\}}f(y,a_k,\tau_{L_k}(\bar{v}_k),0) \,dy\right.\right.}\\
&+\displaystyle{C\int_{\gamma Q_k\cap\{r<|\bar{u}_k-a_k|+ \tfrac{|\tau_{L_k}({\bar v}_k)- {\bar v}_k|}{\lambda}<s\}} \left(1+\tfrac{1}{s-r}|\bar{u}_k-a_k| \left|\nabla(|\bar{u}_k-a_k|+|\tfrac{\tau_{L_k}({\bar v}_k)-{\bar v}_k}{\lambda}|)\right| \right)dy}\\
&\displaystyle{+\left. C\int_{\gamma Q_k\cap\{r<|\bar{u}_k-a_k|+ \tfrac{|\tau_{L_k}({\bar v}_k)- {\bar v}_k|}{\lambda}<s\}}\left(|\nabla(\bar{u}_k-a_k)||\tau_{L_k}({\bar v}_k)- {\bar v}_k|^p+ |\tau_{L_k}({\bar v}_k)|^p\right)\,dy \right\},}\\
\end{align*}
where we have used  $(H_1)_p,$ co-area formula and exploited the fact that ${\bar v}_k$ is regular.

Thus
\begin{align*}
&\frac{1}{|Du|(Q_k)}\int_{\gamma Q_k}f(y,\tilde{u}_k^{r,s,\lambda},\tilde{v}^{r,s,\lambda}_k,\nabla\tilde{u}_k^{r,s,\lambda})\,dy\\
&\le \displaystyle{\frac{1}{|Du|(Q_k)}\left\{\int_{\gamma Q_k}f(y,\bar{u}_k,\bar{v}_k,\nabla \bar{u}_k)\,dy+C|\gamma Q_k| +C\int_{\gamma Q_k\cap\{|\bar{u}_k-a_k|+ \tfrac{|\tau_{L_k}({\bar v}_k)- {\bar v}_k|}{\lambda}\ge s\}}|\tau_{L_k}({\bar v}_k)|^p dy \right.}\\
&\displaystyle{+C\frac{s}{s-r}\int_{{\gamma Q_k}\cap\{r<|\bar{u}_k-a_k|+\tfrac{|\tau_{L_k}({\bar v}_k)-{\bar v}_k|}{\lambda}<s\}} \left|\nabla\left(|\bar{u}_k-a_k| + \tfrac{|\tau_{L_k}(\bar{v}_k)-{\bar v}_k|}{\lambda}\right)\right|+|\nabla\bar{u}_k|dy}\\
&\displaystyle{+C\left.\int_{\gamma Q_k\cap\{r<|\bar{u}_k-a_k|+ \tfrac{|\tau_{L_k}({\bar v}_k)- {\bar v}_k|}{\lambda}<s\}}[|\tau_{L_k}({\bar v}_k)- {\bar v}_k|^p+ |\tau_{L_k}({\bar v}_k)|^p]\,dy\right \}} \\
&\le\displaystyle{\frac{1}{|Du|(Q_k)}\left\{\int_{\gamma Q_k}f(y,\bar{u}_k,\bar{v}_k,\nabla \bar{u}_k)\,dy+C|\gamma Q_k|+\int_{\gamma Q_k\cap\{|\bar{u}_k-a_k|+ \frac{|\tau_{L_k}({\bar v}_k)- {\bar v}_k|}{\lambda}\ge s\}}|\tau_{L_k}({\bar v}_k)|^p dy \right.}\\
&+\displaystyle{C\frac{s}{s-r}\int_r^s\mathcal{H}^{N-1}(\gamma Q_k\cap\{|\bar{u}_k-a_k|+\tfrac{|\tau_{L_k}({\bar v}_k)- {\bar v}_k|}{\lambda}=t\})\,dt }\displaystyle{+C\int_{\gamma Q_k\cap\{r<|\bar{u}_k-a_k|+ \frac{|\tau_{L_k}({\bar v}_k)- {\bar v}_k|}{\lambda}<s\}}|\nabla\bar{u}_k| \,dy }\\
&\displaystyle{+C\left.\int_{\gamma Q_k\cap\{r<|\bar{u}_k-a_k|+ \frac{|\tau_{L_k}({\bar v}_k)- {\bar v}_k|}{\lambda}<s\}}|\tau_{L_k}({\bar v}_k)- {\bar v}_k|^p+ |\tau_{L_k}({\bar v}_k)|^p,dy \right\}}
\end{align*}
By condition a) above
$$
\displaystyle{\limsup_{k\rightarrow +\infty}\frac{1}{|Du|(Q_k)}\int_{\gamma Q_k}f(y,\bar{u}_k,\bar{v}_k,\nabla \bar{u}_k)\,dy\le \mu^c(x_0)}.$$ 
Moreover, for fixed $k$, for every $\lambda$, and for almost every $s$,
\begin{equation*}
\lim_{r\rightarrow s}\frac{s}{s-r}\int_r^s\mathcal{H}^{N-1}\left(\gamma Q_k)\cap\left\{|\bar{u}_k-a_k|+ \tfrac{|\tau_{L_k}({\bar v}_k)|}{\lambda}=t\right\}\right)\,dt=s\mathcal{H}^{N-1}\left(\gamma Q_k\cap\left\{|\bar{u}_k-a_k|+ \tfrac{|\tau_{L_k}({\bar v}_k)-{\bar v}_k|}{\lambda}
=s\right\}\right)
\end{equation*}
and
\begin{align*}
&\lim_{r \to s}\int_{\gamma Q_k\cap\{r<|\bar{u}_k-a_k|+ \tfrac{|\tau_{L_k}({\bar v}_k)- {\bar v}_k|}{\lambda}
<s\}}|\nabla\bar{u}_k| \,dy\\
&=\lim_{r\to s}\int_{\gamma Q_k\cap\{r<|\bar{u}_k-a_k|+ \tfrac{|\tau_{L_k}({\bar v}_k)- {\bar v}_k|}{\lambda}
<s\}}(|\tau_{L_k}({\bar v}_k)- {\bar v}_k|^p+ |\tau_{L_k}({\bar v}_k)|^p)\,dy =0.
\end{align*}
Then, for each $k$, we can choose $\frac{2}{n^2}\le r_k<s_k \le\frac{1}{2n}$ such that
\begin{equation*}
\frac{1}{|Du|(Q_k)}\int_{\gamma Q_k\cap\{r_k<|\bar{u}_k-a_k|+\tfrac{|\tau_{L_k}({\bar v}_k)-{\bar v}_k|}{\lambda}
<s_k\}}|\nabla\bar{u}_k| \,dy\le \frac{\varepsilon_k^N}{|Du|(Q_k)},
\end{equation*}
and we choose $\lambda_k$ such that 
\begin{equation}
\label{estimateLambda}
\displaystyle{
\int_Q \tfrac{\nabla(|\tau_{L_k}({\bar v}_k)-{\bar v}_k|)}{\lambda_k}dy\leq C}
\end{equation}
for a fixed constant $C$ and, making use of Lemma 2.12 in \cite{FM2}, we observe that
\begin{align*}
&\displaystyle{\frac{1}{|Du|(Q_k)}
	\frac{s}{s-r}\int_r^s\mathcal{H}^{N-1}(\gamma Q_k)\cap\left\{|\bar{u}_k-a_k|+ \tfrac{|\tau_{L_k}({\bar v}_k)- {\bar v}_k|}{\lambda_k}=t\right\})\,dt }\\
&\displaystyle{\le\frac{1}{|Du|(Q_k)}
	\frac{c}{\ln(n)}\int_{\gamma Q_k\cap\{|\bar{u}_k-a_k|+ \tfrac{|\tau_{L_k}({\bar v}_k)- {\bar v}_k|}{\lambda_k}\le \frac{1}{2n}\}}\nabla\left(|\bar{u}_k|+ \tfrac{|\tau_{L_k}({\bar v}_k)- {\bar v}_k|}{\lambda_k}\right) \,dy.}
\end{align*}
We can estimate the last expression by \eqref{estimateLambda} and arguing as in \cite[(4.17)]{FM2}
\begin{align*}
&\displaystyle{\frac{1}{|Du|(Q_k)}\frac{c}{\ln(n)}\int_{\gamma Q_k\cap\{|\bar{u}_k-a_k|+\tfrac{|\tau_{L_k}({\bar v}_k)-{\bar v}_k|}{\lambda_k}\le \tfrac{1}{n}\}}|\nabla\bar{u}_k| \,dy}
\\
&\le \displaystyle{\frac{1}{|Du|(Q_k)}\frac{c}{\ln(n)}\int_{\gamma Q_k\cap\{|\bar{u}_k-a_k|+\tfrac{|\tau_{L_k}({\bar v}_k)-{\bar v}_k|}{\lambda_k}\le \tfrac{1}{n}\}}f(y,\bar{u}_k,\bar{v}_k,\nabla\bar{u}_k)\,dy.}
\end{align*}
Moreover, the $p$-equiintegrability of $\{\tau_{L_k}({\bar v}_k) \}$ guarantees that
$$
\displaystyle{\int_{\gamma Q_k\cap\{|\bar{u}_k-a_k|
+ \tfrac{|\tau_{L_k}({\bar v}_k) - {\bar v}_k|}{\lambda_k}
\geq s_k\}} |\tau_{L_k}({\bar v}_k)|^p\,dy =O\left(\frac{1}{n}\right).}
$$
and from condition a) in Step 1 it follows (\ref{eqwithtildeu}) as we claimed.
\bigskip

To achieve Step 2 it remains to prove (\ref{secondeqwithak}). By a change of variables this is equivalent to prove
$$\lim_{k\rightarrow +\infty}\frac{\varepsilon_k^{N-1}}{|Du|(Q_k)}\int_Q\left|\tilde{u}_k(x_0+\varepsilon_k z)-a_k-\left(u(x_0+\varepsilon_k z)-\frac{1}{|Q_k|}\int_{Q_k}u\,dy  \right)\right|\,dz=0$$ which can be written $\Vert\hat{u}_k-\bar{w}_k\Vert_{L^1(Q)}\rightarrow 0$ if we introduce the functions
$$
\hat{u}_k(z):=\frac{\varepsilon_k^{N-1}}{|Du|(Q_k)}\left[u(x_0+\varepsilon_k z)-\frac{1}{|Q_k|}\int_{Q_k}u(y)\,dy\right],$$
\begin{equation}
\label{wkhat}
\bar{w}_k(z):=\tfrac{\varepsilon_k^{N-1}}{|Du|(Q_k)}\left(\tilde{u}_k(x_0+\varepsilon_k z)-a_k\right), \;
w_k(z):=\tfrac{\varepsilon_k^{N-1}}{|Du|(Q_k)}\left(\bar{u}_k(x_0+\varepsilon_k z)-a_k\right).
\end{equation} 
Thus we have
\begin{align*}
&\Vert\hat{u}_k-\bar{w}_k\Vert_{L^1} \leq \displaystyle{\Vert\hat{u}_k-w_k\Vert_{L^1} +\tfrac{\varepsilon_k^{N-1}}{|Du|(Q_k)}\int_Q|\bar{u}_k(x_0+\varepsilon_k z)-\tilde{u}_k(x_0+\varepsilon_k z)|\,dz}\\
&\displaystyle{=\Vert\hat{u}_k-w_k\Vert_{L^1}+ \tfrac{\varepsilon_k^{N-1}}{|Du|(Q_k)}\times}\\
&\displaystyle{ \times\int_Q\left|\bar{u}_k(x_0+\varepsilon_k z)-a_k)(1-\varphi_k\left(\left|\bar{u}_k(x_0+\varepsilon_k z)-a_k\right|+ \tfrac{|\tau_{L_k} {\bar v}_k(x_0+ \varepsilon_k z)- {\bar v}_k(x_0+ \varepsilon_k z)|}{\lambda_k}\right)\right|\,dz}\\
&\le \displaystyle{\Vert\hat{u}_k-w_k\Vert_{L^1} +\frac{\varepsilon_k^{N-1}}{|Du|(Q_k)}\times}\\
&\displaystyle{\times\int_{\{y\in Q:\ |\bar{u}_k(x_0+\varepsilon_k y)-a_k|+ \tfrac{|\tau_{L_k}\circ {\bar v}_k(x_0+ \varepsilon_k z)- {\bar v}_k(x_0+ \varepsilon_k z)|}{\lambda_k}\ge r_k\}}|\bar{u}_k(x_0+\varepsilon_k z)-a_k|\,dz}\\
&\le \displaystyle{\Vert\hat{u}_k-w_k\Vert_{L^1} +\int_{\{y\in Q:\ |\bar{u}_k(x_0+\varepsilon_k y)-a_k|+\tfrac{|\tau_{L_k}\circ {\bar v}_k(x_0+ \varepsilon_k z)- {\bar v}_k(x_0+ \varepsilon_k z)|}{\lambda_k}\ge r_k\}}|w_k(z)|\,dz.}
\end{align*}

Observe that $\Vert\hat{u}_k-w_k\Vert_{L^1}\rightarrow0$. Indeed it is exactly condition c) in Step 1, if we make the evident change of variables.

For the second term, we start by proving that $\{w_k\}$ is equi-integrable.
Indeed, by the definition of total variation of the BV function $\hat{u}_k$, it is clear that $|D\hat{u}_k|(Q)=1$. Moreover, since $\int_Q\hat{u}_k\,dz=0$, using Poincar\'{e} inequality (cf. \cite[Theorem 3.44]{AFP}) we deduce that $\{\hat{u}_k\}$ is bounded in $L^1$. Therefore the compactness of $BV$ in $L^1$ (cf \cite[Theorem 3.23]{AFP}) implies that $\{\hat{u}_k\}$ is equi-integrable. Then adding the fact that $\Vert\hat{u}_k-w_k\Vert_{L^1}\rightarrow 0$ as $ k\to +\infty$, we get that $\{w_k\}$ is equi-integrable as desired. It remains to prove that
$\left|\left\{y\in Q:\ |\bar{u}_k(x_0+\varepsilon_k y)-a_k|+\tfrac{|\tau_{L_k}\circ {\bar v}_k(x_0+ \varepsilon_k y)- {\bar v}_k(x_0+ \varepsilon_k y)|}{\lambda_k}\ge r_k  \right\}\right|\rightarrow 0$ as $k\rightarrow +\infty$
to obtain $\int_{\{y\in Q:\ |\bar{u}_k(x_0+\varepsilon_k y)-a_k|+\tfrac{|\tau_{L_k}\circ {\bar v}_k(x_0+ \varepsilon_k y)- {\bar v}_k(x_0+ \varepsilon_k y)|}{\lambda_k}\ge r_k\}}|w_k(y)|\,dy\rightarrow 0$ and thus (\ref{secondeqwithak}). Indeed, since $a_k\rightarrow u(x_0)$,  and $\tfrac{|\tau_{L_k}\circ {\bar v}_k(x_0+ \varepsilon_k y)- {\bar v}_k(x_0+ \varepsilon_k y)|}{\lambda_k} \to 0$, for a.e. $y\in Q$ for sufficiently large $k$
\begin{align*}
&\displaystyle{\left|\left\{y\in Q:\ |\bar{u}_k(x_0+\varepsilon_k y)-a_k|+\tfrac{|\tau_{L_k}\circ {\bar v}_k(x_0+ \varepsilon_k y)- {\bar v}_k(x_0+ \varepsilon_k y)|}{\lambda_k}\ge r_k  \right\}\right| }\\
&\displaystyle{\le  \left|\left\{y\in Q:\ |\bar{u}_k(x_0+\varepsilon_k y)-u(x_0)|+\tfrac{|\tau_{L_k}\circ {\bar v}_k(x_0+ \varepsilon_k y)- {\bar v}_k(x_0+ \varepsilon_k y)|}{\lambda_k}\ge \tfrac{1}{n^2}  \right\}\right|}\\
&\le \displaystyle{\int_Q n^2 |\bar{u}_k(x_0+\varepsilon_k y)-u(x_0)|+\tfrac{|\tau_{L_k}\circ {\bar v}_k(x_0+ \varepsilon_k y)- {\bar v}_k(x_0+ \varepsilon_k y)|}{\lambda_k} \,dy }\\
&=\displaystyle{ \frac{n^2}{|Q_k|}\int_{Q_k} \left(|\bar{u}_k(z)-u(x_0)|+\tfrac{|\tau_{L_k}\circ {\bar v}_k(z)- {\bar v}_k(z)|}{\lambda_k}\right)\,dz, }
\end{align*}
the result following from condition b) and the arbitrariness of $\lambda_k$ and \cite[Lemma 8.13]{FL}. 
\smallskip

\noindent{\bf Step 3.} Notice that, defining \begin{equation}
\label{thetak}
\theta_k:=\frac{|Du|(Q_k)}{\varepsilon_k^N},
\end{equation} and recalling the definition of $\bar{w}_k$, (\ref{eqwithtildeu}) can be written as
\begin{equation}\label{eqwithmuk}\left(1+\omega\left(\tfrac{1}{n}\right)\right)\,\mu^c(x_0)\ge\limsup_{k\rightarrow +\infty}\frac{1}{\mu_k}\int_{\gamma Q}f(x_0,u(x_0),\tilde{v}_k(x_0+\varepsilon_k z),\theta_k\nabla\bar{w}_k(z))\,dz.\end{equation}

Let $V_k(z):=\theta_k^{-\frac{1}{p}}\tilde{v}_k(x_0+\varepsilon_k z).$
By $d)$ 
it results that $V_k \rightharpoonup 0 \hbox{ in }L^p(Q;\mathbb R^m),$ and
\begin{equation*}
\left(1+\omega\left(\tfrac{1}{n}\right)\right)\,\mu^c(x_0)\ge\limsup_{k\rightarrow +\infty}\frac{1}{\theta_k}\int_{\gamma Q}f(x_0,u(x_0),\theta_k^{\frac{1}{p}}V_k(z),\theta_k\nabla\bar{w}_k(z))\,dz.
\end{equation*}
Then, modifying $\{V_k\}$ and $\{\bar{w}_k\}$, we get new sequences $\{\tilde{V}_k\}$ and $\{\tilde{w}_k\}$ in order apply the convexity-quasiconvexity of $f$. In fact we will need to work on the boundary of an inner cube $\tau Q$, $\tau\in (t,\gamma)$, and the sequences will be modified in a layer $\tau Q\setminus\tau(1-\delta)Q$. 

To be precise, we claim that it is possible to define  $\tilde{V}_k\rightharpoonup 0$ in $L^p(\tau Q;\mathbb R^m)$, $\int_{\tau Q}\tilde{V}_kdz=0$, and $\tilde{w}_k(x)=A+\varphi_k(x)$ for some $\varphi_k\in W^{1,\infty}_{per}(\tau Q;\mathbb{R}^d)$ and such that
$$\left(1+\omega\left(\tfrac{1}{n}\right)\right)\mu^c(x_0)\ge \lim_{k\rightarrow +\infty}\frac{1}{\theta_k}\int_{\tau Q} f(x_0,u(x_0),\theta_k^{\frac{1}{p}}\tilde{V}_k(z),\theta_k \nabla\tilde{w}_k(z))\,dz +\Lambda(1-t),$$ for some continuous function $\Lambda:[0,1]\rightarrow \mathbb{R}$ with $\Lambda(0)=0$.

We start choosing the function into which $\bar{w}_k$ will be modified. We will show that there is a sequence $\{\xi_k\}$ of smooth functions depending only on $x_N$ such that
\begin{equation}\label{limitxiandhatu}
\Vert\xi_k-\hat{u}_k\Vert_{L^1}\rightarrow 0\quad\text{and}\quad\nabla\xi_k(\tau Q)-D\hat{u}_k(\tau Q)\rightarrow 0\ \text{a.e.}\ \tau\in (0,1) \hbox{ as }k \to +\infty.\end{equation}
The procedure is to average $\hat{u}_k$ in $x_1,...,x_{N-1}$ and regularize the function obtained as follows. Let $\eta_k(x_N):=\int_{Q'}\hat{u}_k(x',x_N)\,dx'$ where $Q':=(-1/2,1/2)^{N-1}$ and $x':=(x_1,...,x_{N-1})$. Define $\zeta_k(x_N):=(\eta_k\ast\rho_k)(x_N)$ for some mollifying function $\rho_k$ such that $\Vert\zeta_k-\eta_k\Vert_{L^1((-\frac{1}{2},\frac{1}{2}))}\le\frac{1}{k}$ and define $\xi_k(x)=\zeta_k(x_N)$. Then
$$
\Vert\xi_k-\hat{u}_k\Vert_{L^1(Q)}  \le  \Vert\zeta_k-\eta_k\Vert_{L^1((-\frac{1}{2},\frac{1}{2}))}+\Vert\eta_k-\hat{u}_k\Vert_{L^1(Q)},
$$
where we have identified $\eta_k$ with its natural extension to Q. By the choice of $\zeta_k, \,\lim_{k\rightarrow +\infty}\Vert\zeta_k-\eta_k\Vert_{L^1}=0$ and for the other term we have, using Poincar\'e inequality,
$$
\displaystyle{\Vert\eta_k-\hat{u}_k\Vert_{L^1((-\frac{1}{2},\frac{1}{2}))} \le \int_Q\left|\int_{Q'}\hat{u}_k(x',z_N)\,dx'-\hat{u}_k(z)\right|\,dz}\leq\int_{-1/2}^{1/2}c\left|D_{x'}\hat{u}_k(\cdot,z_N)\right|(Q')\,dz_N
$$

By definition of $\hat{u}_k$, doing the natural change of variables one get
\begin{equation}D\hat{u}_k(Q)=\frac{Du(Q_k)}{|Du|(Q_k)},\label{characthatu}\end{equation} which, accordingly to (\ref{limitA}) converges to the matrix $A=a\otimes e_N$. Thus we are in conditions to apply Proposition A.1 in \cite{FM2} and obtain $|D\hat{u}_k-(D\hat{u}_k\cdot A)A|\rightarrow 0$. In particular, for $i=1,...,N-1$, $|D\hat{u}_k\,e_i|(Q)=|(D\hat{u}_k-(D\hat{u}_k\cdot A)A)e_i+(D\hat{u}_k\cdot A)A\,e_i|(Q)\rightarrow 0$.

Now we choose the layer where we will change the sequences. Let $\tau\in(t,\gamma)$ be such that $\nabla\xi_k(\tau Q)-D\hat{u}_k(\tau Q)\rightarrow 0$, choose $\delta>0$ such that $(1-\delta)\tau>t$ and \begin{equation}\label{estimategradientxi}|\nabla\xi_k|(\tau Q\setminus \tau(1-\delta)Q)\le |D\hat{u}_k|(Q\setminus tQ)=\frac{|Du|((Q_k)\setminus(tQ_k))}{|Du|(Q_k)}.
\end{equation}
Notice that $\nabla\bar{w}_k(z)=\frac{1}{\theta_k}\nabla \tilde{u}(x_0+\varepsilon_k z).$ Then, (\ref{eqwithtildeu}), $(H_1)_p$, and the second limit in (\ref{eq4Cantorlower})  imply that $\nabla \bar{w}_k$ is bounded in $L^1(\gamma Q)$. In particular we can say
\begin{equation}
\label{wbark}
\int_{\tau Q\setminus \tau(1-\delta)Q}(|V_k|^p+|\nabla \bar{w}_k|)\,dz\le C,\ \forall\ k.
\end{equation}

Then we use the slicing method as in the proof of Lemma \ref{3.1FMr}, replacing the cube $Q$ therein by $\tau Q$. Thus we divide for every $j\in \mathbb N$, $\tau Q\setminus \tau(1-\delta)Q$ into $j$ layers,
getting recursively a sequence $k(j)$, layers $S_j:=\{z\in \tau Q\setminus \tau(1-\delta)Q:\ \alpha_j<{\rm dist}(z,\partial (\tau Q))<\beta_j\}$
and  cut-off functions $\eta_j$ on $\tau Q$ such that
\begin{equation*}
\int_{S_j}(|V_{k(j)}|^p+|\nabla\bar{w}_{k(j)}|)\,dz\le \frac{C}{j},\, \frac{1}{|S_j|}\int_{S_j}|\bar{w}_{k(j)}-\xi_{k(j)}|\,dz\le \frac{1}{j}, \hbox{ and }
\frac{\displaystyle{\left|\frac{1}{|\tau Q|}\int_{\tau Q}-\eta_jV_{k(j)}\,dz\right|}}{\displaystyle{\left|\frac{1}{|\tau Q|}\int_{\tau Q}(1-\eta_j)\,dz\right|}}\le 1.
\end{equation*}
Now, define $$\tilde{V}_j(z):=(1-\eta_j(z))\frac{\displaystyle{\frac{1}{|\tau Q|}\int_{\tau Q}-\eta_jV_{k(j)}\,dz}}{\displaystyle{\frac{1}{|\tau Q|}\int_{\tau Q}(1-\eta_j)\,dx}}+\eta_j(z)V_{k(j)}(z)$$
and 
\begin{equation}
\label{wtildej}
\tilde{w}_j(z):=(1-\eta_j(z))\xi_{k(j)}(z)+\eta_j(z)\bar{w}_{k(j)}(z).
\end{equation}
By (\ref{eqwithmuk}), adding and subtracting $f(x_0,u(x_0),\theta_{k(j)}^{\frac{1}{p}}\tilde{V}_j(z),\theta_{k(j)}\nabla\tilde{w}_j(z))$  inside the integral, having in mind the definition of $\eta_j$ and using $(H_1)_p$ , we get

\begin{align*}
&\displaystyle{\left(1+\omega\left(\tfrac{1}{n}\right)\right)\,\mu^c(x_0)\ge\limsup_{j\rightarrow +\infty}\frac{1}{\theta_{k(j)}}\int_{\tau Q}f(x_0,u(x_0),\theta_{k(j)}^{\frac{1}{p}} V_{k(j)}(z),\theta_{k(j)}\nabla\bar{w}_{k(j)}(z))\,dz}\\ 
&\ge \displaystyle{\limsup_{j\rightarrow +\infty}\frac{1}{\theta_{k(j)}}\left\{\int_{\tau Q}f(x_0,u(x_0),\theta_{k(j)}^{\frac{1}{p}} \tilde{V}_{j},\theta_{k(j)}\nabla\tilde{w}_j)\,dz\right.}\\
&\displaystyle{\left.\,\,\,-\int_{\{x\in \tau Q:{\rm dist}(x,\partial(\tau Q))\le\beta_j\}}f(x_0,u(x_0),\theta_{k(j)}^{\frac{1}{p}}\tilde{V}_j,\theta_{k(j)}\nabla\tilde{w}_j)\,dz\right\}}\\  
&\ge \displaystyle{\limsup_{j\rightarrow +\infty}\frac{1}{\theta_{k(j)}}\int_{\tau Q}f(x_0,u(x_0),\theta_{k(j)}^{\frac{1}{p}}\tilde{V}_j,\theta_{k(j)}\nabla\tilde{w}_j)\,dz-\int_{S_j}C(|\nabla\bar{w}_{k(j)}|+ |\nabla \eta_j|\,|\bar{w}_{k(j)}-\xi_{k(j)}|)dz}\\ 
&\displaystyle{\,\,\,-\int_{S_j} C+ |V_{k(j)}|^p\,dz-\int_{\tau Q\setminus\tau(1-\delta)Q}c(1+|\nabla \xi_{k(j)}|)\,dz}\\ 
&\ge  \displaystyle{\limsup_{j\rightarrow +\infty}\frac{1}{\theta_{k(j)}}\int_{\tau Q}f(x_0,u(x_0),\theta_{k(j)}^{\frac{1}{p}}\tilde{V}_j,\theta_{k(j)}\nabla\tilde{w}_j)\,dz-\frac{c}{j}-\int_{\tau Q\setminus\tau(1-\delta)Q}c(1+|\nabla \xi_{k(j)}|)\,dz.}
\end{align*}
By (\ref{estimategradientxi}) and (\ref{eq5Cantorlower}), $\int_{\tau Q\setminus\tau(1-\delta)Q}c(1+|\nabla \xi_{k(j)}|)\,dz\le \Lambda(1-t)$ for some continuous $\Lambda:[0,1]\rightarrow \mathbb{R}$ with $\Lambda(0)=0$. Therefore we have
$$\left(1+\omega\left(\tfrac{1}{n}\right)\right)\,\mu^c(x_0)\ge \displaystyle{\limsup_{j\rightarrow +\infty}\frac{1}{\theta_{k(j)}}\int_{\tau Q}f(x_0,u(x_0),\theta_{k(j)}^\frac{1}{p}\tilde{V}_j(z),\theta_{k(j)}\nabla\tilde{w}_j(z))\,dz}-\Lambda(1-t),$$
which proves our claim up to a relabeling of the sequence.

\noindent{\bf Step 4.} Using the convexity-quasiconvexity of $f$ we will achieve in this step the desired conclusion. Indeed, as remarked above, the functions $\tilde{V}_k$ have $0$ average in $\tau Q$. On the other hand, we can always construct $\xi_k$ such that $\xi_k(x)-(\frac{\zeta_k(\frac{\tau}{2})-\zeta_k(-\frac{\tau}{2})}{\tau}\otimes e_N)\,x$ is a $\tau Q$-periodic function. This, together with the fact that $\tilde{w}_j=\xi_{k(j)}$ on $\partial (\tau Q)$, yields that $\tilde{w}_j\in (\frac{\zeta_{k(j)}(\frac{\tau}{2})-\zeta_{k(j)}(-\frac{\tau}{2})}{\tau}\otimes e_N)\,x+W^{1,\infty}_{per}(\tau Q;\mathbb R ^d)$. Therefore
$$\left(1+\omega\left(\tfrac{1}{n}\right)\right)\,\mu^c(x_0)\ge O(1-t)+\displaystyle{\limsup_{j\rightarrow +\infty}\frac{|\tau Q|}{\theta_{k(j)}}f\left(x_0,u(x_0),0,\theta_{k(j)}\tfrac{\zeta_k(\frac{\tau}{2})-\zeta_k(-\frac{\tau}{2})}{\tau}\otimes e_N\right)}.$$
If we add and subtract in the previous limit the quantity $\frac{|\tau Q|}{\theta_{k(j)}}f(x_0,u(x_0),0,\frac{\theta_{k(j)}}{|\tau Q|}A)$ we get two terms.

\noindent One gives, by definition, the expected value of the $f^\infty_p$ function, i.e.
$$\displaystyle{\lim_{j\to +\infty}\tfrac{|\tau Q|}{\theta_{k(j)}}f\left(x_0,u(x_0),0,\tfrac{\theta_{k(j)}}{|\tau Q|}A\right)=
f^\infty_p(x_0,u(x_0),0,A).}$$

\noindent The other term can be estimated using the Lipschitz continuity of $f(x_0,u(x_0),0,\cdot)$, i.e. \eqref{p-lipschitzcontinuity},  and (\ref{characthatu}).

\noindent After passing to the limit on $k$, and using \eqref{limitxiandhatu}, \eqref{eq5Cantorlower} and \eqref{limitA}, we get
$$
\left(1+\omega\left(\tfrac{1}{n}\right)\right)\,\mu^c(x_0)\geq O(1-t)+f^\infty_p(x_0,u(x_0),0, A)+\Lambda(1-t)
$$ where $\Lambda$ is a continuous function with $\Lambda(0)=0$.
We finally obtain the desired estimate letting $n\rightarrow +\infty$ and $t\rightarrow 1^{-}$.\end{proof}

\subsection{Upper bound in $BV\times L^p$}\label{ubbvlp}

In order to achieve the representation in Theorem \ref{MainResultp}, we localize our functionals. We define for open sets $A\subset\Omega$ and for any $(u,v)\in BV(\Omega;\mathbb{R}^d)\times L^p(\Omega;\mathbb{R}^m)$,
$${\cal J}_p(u,v;A):=\inf\left\{\liminf_{n\to +\infty} J(u_n,v_n;A):\ u_n\in BV(\Omega;\mathbb{R}^d),\ v_n\in L^p(\Omega;\mathbb{R}^m),\ u_n\to u \hbox{ in } L^1, v_n \rightharpoonup v  \hbox{ in } L^p\right\}$$
where, with an abuse of notation,

\begin{equation}
\label{Jext}
J(u,v;A):=\left\{\begin{array}{l}\displaystyle{\int_A f(x,u,v,\nabla u)\,dx},\ \text{if}\ (u,v)\in W^{1,1}(\Omega;\mathbb{R}^d)\times L^p(\Omega;\mathbb{R}^m),\\
+\infty,\ \text{otherwise.}\end{array}\right.
\end{equation}

We start by observing that $(H_1)_p$ implies that for every $u \in BV(\Omega;\mathbb{R}^d)$ and for every $v \in L^p(\Omega;\mathbb{R}^m)$, it results
\begin{equation*}
{\cal J}_p(u,v;A)\le C\left(|A|+|Du|(A)+\int_A |v|^p dx\right),
\end{equation*}

We observe that, arguing as in \cite[Lemma 3.5]{CRZ1}, ${\cal J}_p$ is a variational functional. This means that the following conditions hold:
\medskip

\noindent 1. ${\cal J}_p$ is local, that is ${\cal J}_p(u,v;A)={\cal J}_p(u',v';A),$ for every $A\in \mathcal{A}( \Omega)$ and every $(u,v),(u',v')\in BV(A;\mathbb{R}^d)\times L^p(A;\mathbb{R}^m)$ such that $u=u'$ and $v=v'$ a.e. in $A$;

\noindent 2. ${\cal J}_p$ is sequentially lower semicontinuous, that is
\begin{equation*}
{\cal J}_p(u,v;A)\le\liminf_{n\to +\infty}{\cal J}_p(u_n,v_n;A),\ \forall\ A\subset\Omega \text{ open, } \ u_n\to u \text{ in } L^1(A;\mathbb{R}^d)\text{ and }v_n\rightharpoonup v\text{ in }L^\infty(A;\mathbb{R}^m);
\end{equation*}
\noindent 3. ${\cal J}_p(u,v;\cdot)$ is the trace of a Radon measure restricted to the family $\mathcal{A}(\Omega).$

The following result is devoted to prove the upper bound in $BV\times L^p$, $1<p<+\infty$.

\begin{theorem}\label{upperboundL_pthm}
Let $f:\Omega
\times\mathbb{R}^d\times\mathbb{R}^{m}\times\mathbb{R}^{d\times
N}\rightarrow[ 0,+\infty)$ be a continuous function satisfying $(H_0)$, $( H_{1}) _{p}-( H_{3}) _{p}$, and ${\overline J}_p$ be defined in \eqref{relaxedp}. Then for every $(u,v)\in BV(\Omega;\mathbb{R}^d) \times L^{p}(\Omega ;\mathbb{%
R}^{m})$
\begin{align}\label{upperboundLp}
{\overline J}_p( u,v;\Omega) 
\leq\int_{\Omega}f(x,u ,v ,\nabla
u) dx +\int_{J_{u}\cap\Omega}K_p( x,0,u^{+} ,u^{-},\nu_{u}) d\mathcal{H}^{N-1}
+\int_{\Omega}f_p^{\infty}(x,u ,0,\tfrac{dD^{c}u}{d\vert D^{c}u\vert }) d\vert D^{c}u\vert .  
\end{align} 
\end{theorem}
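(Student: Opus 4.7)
The plan is to exploit the fact that, as already noted, $\mathcal{J}_p(u,v;\cdot)$ is (the trace of) a finite Radon measure on $\mathcal{A}(\Omega)$ which, by $(H_1)_p$, is absolutely continuous with respect to $\lambda := \mathcal{L}^N + \mathcal{H}^{N-1}\lfloor J_u + |D^c u|$. Since these three measures are mutually singular, by Besicovitch differentiation it suffices to exhibit, at $\lambda$-a.e.\ $x_0$, three local competitors realizing the Radon--Nikod\'ym bounds $f(x_0,u(x_0),v(x_0),\nabla u(x_0))$, $K_p(x_0,0,u^+(x_0),u^-(x_0),\nu_u(x_0))$, and $f_p^{\infty}(x_0,u(x_0),0,\tfrac{dD^cu}{d|D^cu|}(x_0))$ on the absolutely continuous, jump, and Cantor parts, respectively. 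For $\mathcal L^N$-a.e.\ $x_0$ satisfying Proposition~\ref{thm2.3BBBF}\,i), the mollification $u_n:=\varrho_{\varepsilon_n}\ast u$ on $Q(x_0,r)$, paired with $v_n:=v$ and glued to $u$ on $\partial Q(x_0,r)$ via the slicing of Lemma~\ref{3.1FMr}, yields $\nabla u_n\to\nabla u(x_0)$ in the averaged sense of Proposition~\ref{thm2.3BBBF}\,i), so that continuity of $f$ together with $(H_1)_p$--$(H_2)_p$ produces the bulk density bound after sending $n\to\infty$ and $r\to 0^+$.

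For the jump part, fix $\delta>0$ and pick $x_0\in J_u$ at which Proposition~\ref{thm2.3BBBF}\,ii) and $\lim_{r\to 0^+}r^{1-N}\int_{Q_\nu(x_0,r)}|v|^p\,dx=0$ hold. Choose an admissible pair $(w,\eta)\in\mathcal{A}(u^+,u^-,\nu_u(x_0))\times L^\infty(Q_\nu;\mathbb{R}^m)$ with $\int_{Q_\nu}\eta\,dy=0$ such that
\[
\int_{Q_\nu}f_p^\infty(x_0,w,\eta,\nabla w)\,dy\le K_p(x_0,0,u^+,u^-,\nu_u(x_0))+\delta.
\]
After extending $w$ and $\eta$ by $(N-1)$-periodicity in the tangential directions, define on the slab $\Sigma_{r,n}:=\{x\in Q_\nu(x_0,r):|(x-x_0)\cdot\nu_u(x_0)|<r/(2n)\}$
\[
u_n(x):=w\bigl(\tfrac{n}{r}(x-x_0)\bigr),\qquad v_n(x):=\bigl(\tfrac{n}{r}\bigr)^{1/p}\eta\bigl(\tfrac{n}{r}(x-x_0)\bigr),
\]
and $u_n(x):=u^\pm(x_0)$, $v_n(x):=v(x)$ on $Q_\nu(x_0,r)\setminus\Sigma_{r,n}$. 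The change of variables $y=(n/r)(x-x_0)$, combined with tangential periodicity, transforms the slab integral into $(r^N/n)\int_{Q_\nu}f(x_0+ry/n,w(y),(n/r)^{1/p}\eta(y),(n/r)\nabla w(y))\,dy$, and $(H_3)_p$ with $t=n/r\to+\infty$, jointly with $(H_2)_p$ and Proposition~\ref{proprecession}\,iii), then yields $r^{1-N}\int_{\Sigma_{r,n}}f(x,u_n,v_n,\nabla u_n)\,dx\to\int_{Q_\nu}f_p^\infty(x_0,w,\eta,\nabla w)\,dy$. The exterior contribution is bounded by $C\int_{Q_\nu(x_0,r)}(1+|v|^p)\,dx=o(r^{N-1})$, the gluing across $\partial Q_\nu(x_0,r)$ is performed via Lemma~\ref{3.1FMr} and Proposition~\ref{thm2.3BBBF}\,ii), while $v_n\rightharpoonup v$ in $L^p$ follows from the rapid oscillation with zero average of the rescaled $\eta$-piece. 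Sending $n\to+\infty$, $r\to 0^+$, $\delta\to 0^+$ delivers the jump bound.

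For the Cantor part, Alberti's rank-one theorem together with \eqref{eq2Cantorlower} gives, at $|D^cu|$-a.e.\ $x_0$, $A(x_0)=a\otimes\nu$ with $|a|=1$. Choosing $\varepsilon_n\to 0^+$ so that $\theta_n:=|D^cu|(Q(x_0,\varepsilon_n))/\varepsilon_n^N\to+\infty$, the rank-one blow-up $\hat u_n(y):=(\theta_n\varepsilon_n)^{-1}(u(x_0+\varepsilon_n y)-c_n)$ satisfies $D\hat u_n(Q)\to A$; a further mollification plus gluing to $u$ on $\partial Q(x_0,\varepsilon_n)$ produces a $W^{1,1}$ competitor $u_n$ whose gradient is of order $\theta_n$. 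Taking $v_n:=v$ (consistent with the vanishing third argument in $f_p^\infty(x_0,u(x_0),0,A)$) and applying $(H_3)_p$ with $t=\theta_n$, $b=0$, converts $f(x,u_n,v,\nabla u_n)/\theta_n$ into $f_p^\infty(x,u_n,0,\nabla u_n/\theta_n)$ up to $O(\theta_n^{-\tau})$; Proposition~\ref{proprecession}\,iii)-iv) then freezes $(x,u)\mapsto(x_0,u(x_0))$ and gives $\mathcal{J}_p(u,v;Q(x_0,\varepsilon_n))/|D^cu|(Q(x_0,\varepsilon_n))\le f_p^\infty(x_0,u(x_0),0,A(x_0))+o(1)$.

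The \emph{main obstacle} is the jump part: gluing the fine oscillating construction on $\Sigma_{r,n}$ with the ambient $(u,v)$ across $\partial Q_\nu(x_0,r)$ while preserving both the weak-$L^p$ convergence of $v_n$ and the averaged constraint $\int_{Q_\nu}\eta=0$ that characterizes $K_p$. A secondary difficulty, in the Cantor step, is ensuring that the rank-one blow-up provides a genuinely admissible $W^{1,1}$ competitor without inflating the energy, which requires the $(p,1)$-Lipschitz bound \eqref{p-lipschitzcontinuity} together with Proposition~\ref{proprecession}\,iii)-iv) on a compact neighborhood of $(x_0,u(x_0))$.
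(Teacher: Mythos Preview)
Your scheme has a genuine gap in the jump step. The competitor you build on $Q_\nu(x_0,r)$---equal to $w(n(x-x_0)/r)$ on the slab $\Sigma_{r,n}$ and to the constants $u^\pm(x_0)$ outside---converges in $L^1(Q_\nu(x_0,r))$, as $n\to\infty$ with $r$ fixed, to the two-valued function $u_0$, \emph{not} to $u$. Hence it is not an admissible test sequence for $\mathcal J_p(u,v;Q_\nu(x_0,r))$ and cannot bound the Radon--Nikod\'ym density from above. The ``gluing to $u$ on $\partial Q_\nu(x_0,r)$ via Lemma~\ref{3.1FMr}'' does not repair this: that lemma manufactures sequences with boundary values equal to a mollification of $u_0$, still targeting $u_0$; and any direct convex interpolation $\phi\,u_n+(1-\phi)\,u$ fails because $u\in BV$ is not $W^{1,1}$, so the interpolant is not admissible for $J$. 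Knowing that $\|u-u_0\|_{L^1(Q_\nu(x_0,r))}=o(r^N)$ is not enough to produce, for each fixed $r$, a $W^{1,1}$ sequence converging to $u$ with the claimed energy.

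This is precisely why the paper does \emph{not} attack the jump density pointwise. Instead it proves the full inequality \eqref{5.20FM2} first for $u=c\chi_E+d(1-\chi_E)$ (where $u$ \emph{is} the jump function, so your construction becomes legitimate), proceeding through planar interfaces, $x$-dependence via a fine partition into cubes, Whitney covers for general open sets, and polygonal/finite-perimeter interfaces via Proposition~\ref{Baldo}. It then passes to piecewise constant $u$ and to general $u\in BV\cap L^\infty$ by the approximation of \cite{AMT} combined with the \emph{lower semicontinuity} of $\mathcal J_p$; finally truncations in $u$ and $v$ remove the $L^\infty$ constraints. Your Cantor argument has the same structural issue (gluing a blow-up to the BV function $u$), and here too the paper takes a different route: it uses the \emph{global} mollification $u_n=u\ast\rho_n$ (automatically $W^{1,1}$ and $\to u$ in $L^1$) together with the Yosida transform (Proposition~\ref{propYosida}) to freeze $(x,u)$, and then the $(p,1)$-homogeneous envelope $h$ to recover $f_p^\infty$ on rank-one matrices---no local blow-up competitor is needed.
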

\begin{proof}
The representation \eqref{upperboundLp} is achieved first for $(u,v)\in BV(\Omega;\mathbb R^d)\cap L^\infty(\Omega\;\mathbb R^d)\times L^\infty(\Omega;\mathbb R^m)$, then, via an approximation argument as in \cite{AMT}, the result will be obtained in $BV(\Omega;\mathbb R^d)\times L^\infty(\Omega;\mathbb R^m)$. Then a standard truncation argument (see \cite[Theorem 14]{RZ}) leads us to $BV(\Omega;\mathbb R^d)\times L^p(\Omega;\mathbb R^m).$
\smallskip 

\noindent {\bf Part 1}. Let $(u,v)\in BV(\Omega;\mathbb R^d)\cap L^\infty(\Omega\;\mathbb R^d)\times L^\infty(\Omega;\mathbb R^m)$. Since $\overline{J}_p(u,v) = {\cal J}_p(u,v; \Omega)$ and ${\cal J}_p(u,v; \cdot)$ is the trace of a Radon measure on the open subsets of $\Omega$,  absolutely continuous with respect to $|Du| + {\cal L}^N$, it will be enough to prove the following inequalities
\begin{equation}\label{absolutely-continuousUB}
\frac{d {\cal J}_p(u,v;\cdot)}{d {\cal L}^N}(x)\leq f(x,u(x),v(x),\nabla u(x)),\ \mathcal{L}^N-\hbox{a.e.}\ x\in\Omega,
\end{equation}
\begin{equation}\label{Cantor-densityUB}
\frac{d {\cal J}_p(u,v;\cdot)}{d |D^cu|}(x)\leq f^\infty_p\left(x,u(x),0,\tfrac{dD^cu}{d|D^cu|}(x)\right),\ |D^cu|-\hbox{a.e.} \,x\in\Omega.
\end{equation}
\begin{equation}\label{jump-densityUB}
{\cal J}_p(u,v;J_u\cap \Omega)\leq \int_{J_u\cap \Omega}K_p(x,0,u^-(x),u^+(x),\nu_u(x)) d {\cal H}^{N-1}.
\end{equation}
The proof of these inequalities exploits results proven in \cite{AMT}.

\noindent {\bf Bulk part.} The inequality \eqref{absolutely-continuousUB} is an immediate consequence of \cite[Theorems 12 and 14]{RZ}, observing that the same arguments therein can be applied when $u$ is a function of bounded variation.

\noindent{\bf Cantor part.} Let $u \in BV(\Omega;\mathbb R^d) \cap L^\infty(\Omega;\mathbb R^d)$ and $v \in L^\infty(\Omega;\mathbb R^m) $. We follow \cite{FM2} and \cite{FKP1}, identifying $u$ with its approximate limit defined in $\Omega\setminus J_u$. 

Let $u_n:=u\ast \rho_n$, where $\rho_n$ be a sequence of mollifiers, then by \cite[Lemma 2.5]{FM2}, 
\begin{equation}
\label{untou}
u_n(x)\rightarrow u(x),\ |D^cu|-\hbox{a.e.}\ x\in\Omega.\end{equation}
Therefore $u$ is $|D^cu|-$measurable. We write $|Du|=|D^cu|+\eta$, where $\eta$ and $|D^cu|$ are mutually singular Radon measures. Let $x_0\in\Omega$ be such that   
$$\frac{d {\cal J}_p(u,v;\cdot)}{d |D^cu|}(x_0)\text{ exists and is finite,}$$
\begin{equation}\label{FMr5.13}
\lim_{\e \to 0^+}\frac{\eta(B(x_0,\e))}{|D^cu|(B(x_0,\e))}=0,\qquad\lim_{\e \to 0^+}\frac{|Du|(B(x_0,\e))}{|D^cu|(B(x_0,\e))} \hbox{ exists and is finite,}
\end{equation}
\begin{equation}\label{FMr5.14}
\lim_{\e \to 0^+}\frac{\e^N}{|D^cu|(B(x_0,\e))}=0,
\end{equation}

\begin{equation}\label{FMP6.17new}
\lim_{\e \to 0^+}\frac{1}{|D^cu|(B(x_0,\e))}\int_{B(x_0,\e)}|v(x)|\,dx=0, \;\;\; \lim_{\e \to 0^+}\frac{1}{|D^cu|(B(x_0,\e))}\int_{B(x_0,\e)}|v(x)|^p\,dx=0,
\end{equation}

\begin{equation}\label{FMr5.16}A(x_0)=\lim_{\e\to 0}\frac{D^cu(B(x_0,\e))}{|D^cu|(B(x_0,\e))}\text{ exists and is a rank one matrix of norm one,}\end{equation}
\begin{equation}\label{FMr5.17}\lim_{\e\to 0}\frac{1}{|D^cu|(B(x_0,\e))}\int_{B(x_0,\e)}f^\infty_p(x_0,u(x_0),0,A(x))d|D^cu|=f^\infty_p(x_0,u(x_0),0,A(x_0)).
\end{equation}
Fix $\delta>0$. Using the Yosida transform of $f$ introduced in Definition \ref{defYosidap} and the properties in Proposition \ref{propYosida}, we get
\begin{align*}
{\cal J}_p(u,v;B(x_0,\e))&\leq \liminf_{n \to + \infty}\int_{B(x_0,\e)}f(x,u_n,v,\nabla u_n)\,dx\le\liminf_{n \to + \infty}\left\{\int_{B(x_0,\e)}f(x_0,u(x_0),v, \nabla u_n)\,dx\right.\\
&+\left.\int_{B(x_0,\e)} \delta (1+|v|+|\nabla u_n|)+\lambda (\e+ |u_n-u(x_0)|)(1 + |v|+|\nabla u_n|)\,dx\right\}\\
&\le\liminf_{n \to +\infty}\left\{\int_{B(x_0,\e)} f\left(x_0, u(x_0),v,(Du\ast\rho_n)\right)\,dx+ (\delta+\lambda\e) (1+\|v\|_{L^\infty})|B(x_0,\e)|\right.\\
&\left.+(\lambda \varepsilon +\delta)\int_{B(x_0,\e)}|\nabla u_n|\,dx) +\lambda C\int_{B(x_0,\e)} |u_n-u(x_0)|(1+ \|v\|_{L^\infty}|\nabla u_n|)\,dx\right\}\\
\end{align*}

An argument entirely similar to \cite[Section 5, steps 1 and 2 (page 37)]{FM2}, allows us to write
\begin{equation*}
{\cal J}_p(u,v; B(x_0,\varepsilon)) \leq 
\liminf_{\varepsilon \to 0^+}\liminf_{n\to +\infty}\frac{1}{|D^c u|(B(x_0,\varepsilon))}\int_{B(x_0,\varepsilon)} f(x_0, u(x_0), v,Du \ast \varrho_n )dx+ O(\delta).
\end{equation*}

Let $h:\mathbb R^m \times \mathbb R^{d \times N} \to [0,\infty)$ given by $h(b,\xi):=\sup_{t \geq 0}\tfrac{f(x_0, u(x_0), t^{\frac{1}{p}}b, t \xi)- f(x_0, u(x_0),0,0)}{t}.$
Then, $h$ is positively homogeneous of degree $(p,1)$ and satisfies \eqref{p-lipschitzcontinuity}. The convexity of $f^\infty_p(x_0, u(x_0),\cdot,\cdot)$ when $\xi$ is at most a rank-one matrix entails $f^\infty_p(x_0,u(x_0),b,\xi)= h(b,\xi),$
for every $(b,\xi) \in \mathbb R^m \times \mathbb R^{d\times N}$, with ${\rm rank }\xi \leq 1$.
Thus, 
\begin{align}
\label{after5.18FM2}
\frac{d {\cal J}_p(u,v;B(x_0,\varepsilon))}{ d |D^c u|(B(x_0,\varepsilon))} &= \liminf_{\varepsilon \to 0^+}\liminf_{n\to +\infty}\frac{1}{|D^cu|(B(x_0,\varepsilon))} \int_{B(x_0,\varepsilon)} h(v, Du\ast \varrho_n)dx\\
&+\limsup_{\varepsilon \to 0^+}\frac{1}{|D^cu|(B(x_0,\varepsilon))} \int_{B(x_0,\varepsilon)} f(x_0, u(x_0),0,0)dx + O(\delta). \notag 
\end{align}
Observe that, \eqref{p-lipschitzcontinuity} gives
$$
\begin{array}{ll}
\displaystyle{\liminf_{\varepsilon \to 0^+}\liminf_{n\to +\infty} \int_{B(x_0,\varepsilon)} h\left(\frac{v}{(|D^c u|(B(x_0,\varepsilon)))^{\frac{1}{p}}}, \frac{Du \ast \varrho_n}{|D^c u|(B(x_0,\varepsilon))}\right) dx}\\
\\
\displaystyle{\leq \limsup_{\varepsilon \to 0^+}\limsup_{n\to +\infty} \int_{B(x_0,\varepsilon)} h\left(0, \tfrac{Du \ast \varrho_n}{|D^c u|(B(x_0,\varepsilon))}\right) dx+\lim_{\varepsilon \to 0^+} \int_{B(x_0,\varepsilon)}\tfrac{|v|^p}{|D^c u|(B(x_0,\varepsilon))}dx }\\
\displaystyle{+\lim_{\varepsilon \to 0^+} \lim_{n\to +\infty}\int_{B(x_0,\varepsilon)}\left|\tfrac{v}{(|D^c u|(B(x_0,\varepsilon)))^{\frac{1}{p}}}\right| \left|\tfrac{Du \ast \varrho_n}{|D^c u|(B(x_0,\varepsilon))}\right|^{\frac{1}{p'}. }}
\end{array}
$$
and \eqref{FMP6.17new} guarantees that the second limit  from below is $0$. The last term can be estimated via H\"{o}lder inequality, leading to
$$
\displaystyle{\lim_{\varepsilon \to 0^+}\lim_{n\to +\infty} \left(\int_{B(x_0,\varepsilon)}\frac{|v|^p}{|D^c u|(B(x_0,\varepsilon))}dx\right)^{\frac{1}{p}}  \left(\int_{B(x_0,\varepsilon)} \frac{|D u\ast \varrho_n|}{ |D^c u|(B(x_0,\varepsilon))}dx\right)^{\frac{1}{p'}}.}
$$ 
The first term of the above product is null by \eqref{FMP6.17new}, while the latter, exploiting \cite[Lemma 4.5]{AMT} (see also \cite[Lemma 2.5]{FM2}) becomes
$$
\displaystyle{\left(\lim_{\varepsilon \to 0^+}\int_{B(x_0,\varepsilon)} \frac{|Du|}{|D^c u |(B(x_0,\varepsilon))}\right)^{\frac{1}{p'}}}
$$
which is finite by \eqref{FMr5.13}.
Thus, from, \eqref{after5.18FM2} we can conclude that 
$$
\begin{array}{ll}
\displaystyle{\frac{d {\cal J}_p(u,v; B(x_0,\varepsilon))}{ d |D^c u|(B(x_0,\varepsilon))} \leq \limsup_{\varepsilon \to 0^+}\limsup_{n\to +\infty}\frac{1}{|D^cu|(B(x_0,\varepsilon))} \int_{B(x_0,\varepsilon)} h(0, Du\ast \varrho_n)dx + O(\delta).}
\end{array}
$$
Then the thesis is achieved via the same arguments in \cite{FM2}, \eqref{FMr5.17} and letting $\delta \to 0^+$.

\noindent{\bf Jump part.} We show that

\begin{equation}\label{5.19FMrinfty}
\displaystyle{{\cal J}_p(u,v; J_u\cap \Omega)\leq \int_{J_u\cap\Omega} K_p(x,0, u^-, u^+, \nu_u))d {\cal H}^{N-1}},
\end{equation}
for every $u \in BV(\Omega;\mathbb R^d)\cap L^\infty(\Omega;\mathbb R^d)\times L^\infty(\Omega;\mathbb R^m)$. 

The proof of \eqref{5.19FMrinfty} develops exploiting the arguments in \cite[Proposition 4.8]{AMT}, \cite[Lemma 4.2]{FR} and \cite[Proposition 4.1]{BF} and it is divided into three parts according to the limit function $u$.\smallskip

\noindent{\it Case 1.} $u(x)= c \chi_E+ d (1-\chi_E)$ with ${\rm Per}(E;\Omega)< \infty.$ 

\noindent{\it Case 2.} $u(x)=\sum c_i \chi_{E_i}(x)$, where $\{E_i\}_{i=1}^\infty$ forms a partitions of $\Omega$ into sets of finite perimeter.

\noindent{\it Case 3.} $u\in BV(\Omega;\mathbb R^d)\cap L^\infty(\Omega;\mathbb R^d)$.

\smallskip
\noindent {\it Proof of Case 1.} We start to consider $u:=c \chi_E + d (1-\chi_E)$, with ${\rm Per}_\Omega(E)<+\infty$, and $v \in L^\infty(\Omega;\mathbb R^m)$ and we aim to prove that
\begin{equation}
\label{5.20FM2}
\displaystyle{{\cal J}_p(u,v; A)\leq \int_{A}f(x,u,v,0)dx +\int_{J_u\cap A}K_p(x,0,c,d,\nu_u)d {\cal H}^{N-1},\, \text{for every}\, A \in \mathcal A(\Omega).}
\end{equation}
This proof is divided into several steps. 

\noindent{\bf Step 1.} First we assume that $u$ has a planar interface, i.e.
let $\nu \in S^{N-1}$, $a_0 \in \mathbb R^N$, consider $A= a_0 +\lambda Q_\nu$, an open cube centered at $a_0$, with two faces orthogonal to $\nu$, with side length $\lambda$, and let
$$
u(x):=\left\{\begin{array}{ll}
c &\hbox{ if } (x-a_0) \cdot \nu >0,\\
d &\hbox{ if } (x-a_0) \cdot \nu \leq 0.
\end{array}
\right.
$$
We start to consider the case where $f$ does not depend on $x$ and we claim that there exist a sequence $\{u_n\} \subset W^{1,1}(a_0+\lambda Q_\nu;\mathbb R^d)$ such that
$$
\displaystyle{u_n=\left\{
\begin{array}{ll}
c &\hbox{ if }x \cdot \nu=-\frac{\lambda}{2},\\
d &\hbox{ if }x \cdot \nu= \frac{\lambda}{2}, 
\end{array}
\right.}
$$
$u_n(x)= u_n(x+ k \lambda \nu_i), i=1,\dots N-1, k \in \mathbb Z$, where $\{\nu_1,\dots,\nu_{N-1},\nu \}$ is an orthonormal basis of $\mathbb R^N$,
and a sequence $\{v_n\} \subset L^p(a_0+\lambda Q_\nu;\mathbb R^m)$, such that $v_n(x)= v(x)$ if $|(x-a_0)\cdot \nu|> \frac{\lambda}{2(2n+1)}$, with 
$u_n \to u $ in $L^1(a_0+\lambda Q_\nu;\mathbb R^d)$, $v_n \rightharpoonup v$ in $L^p(a_0+\lambda Q_\nu;\mathbb R^m)$
and
\begin{equation}
\label{eqjump1}
\displaystyle{\lim_{n \to +\infty}\int_{a_0+ \lambda Q_\nu}f( u_n,v_n, \nabla u_n)dx = \int_{a_0+ \lambda Q_\nu} f( u,v,0)dx + \lambda^{N-1} K_p(0,c,d,\nu).}
\end{equation}

\noindent {\bf Step 1 a).} We first consider the case $a_0=0$ and $\lambda=1$ and without loss of generality we assume that $\nu=e_N$. We claim that for all $\xi \in {\cal A}(c,d,e_N)$ and for all $\varphi \in L^p(Q;\mathbb R^m)$,with $\int_Q\varphi dx =0$, there exists $\xi_n \in {\cal A}(c,d,e_N)$ and 
$v_n\in L^p(Q;\mathbb R^m)$ such that $v_n(x)=v(x)$ if $|x_N| >\frac{1}{2(2n+1)}$ 
\begin{equation}
\label{xinvn}
\displaystyle{\|\xi_n-u\|_{L^1(Q;\mathbb R^d)}\to 0,  \;\;\;v_n \rightharpoonup v \hbox{ in }L^p(Q;\mathbb R^m) \hbox{ as }n \to +\infty,}
\end{equation}
and
\begin{equation}
\label{eq2jumptoprove}
\displaystyle{\lim_{n\to +\infty}\int_Q f(\xi_n,  v_n,\nabla \xi_n)dx =\int_Q f(u,v, 0 )dx+ \int_Q f^\infty_p(\xi, \varphi,\nabla \xi)dx. }
\end{equation}

\noindent Let $\Sigma:=\{x\in Q: x_N=0\}$. For $k \in \mathbb N$, we label the elements of $(\mathbb Z\cap [-k,k]^N)\times \{0\}$ by $\{a_i\}_{i=1}^{{2k+1}^{N-1}}$ and we observe
$$
(2k+1)\overline{\Sigma}=\bigcup_{i=1}^{(2k+1)^{N-1}} (a_i+\overline{\Sigma}),
$$
with $(a_i+ \Sigma)\cap (a_j+\Sigma)= \emptyset \hbox{ if }i \not= j.$
Extending $\xi(\cdot, x_N) \to \mathbb R^{N-1}$ by periodicity we define
\begin{equation}
\label{xi2k+1}
\xi_{2k+1}(x):=\left\{
\begin{array}{ll}
c &\hbox{ if }x_N > \frac{1}{2(2k+1)},\\
\xi((2k+1)x )&\hbox{ if } |x_N|\leq \frac{1}{2(2k+1)},\\
d &\hbox{ if }x_N <-\frac{1}{2(2k+1)}. 
\end{array}
\right.
\end{equation}
Clearly $\xi_{2k+1}\in {\cal A}(c,d,e_N)$ and $\|\xi_{2k+1}-u\|_{L^1(Q;\mathbb R^d)}\to 0$ as $k \to +\infty$ (see proof of \cite[Lemma 4.2]{FM2}).
Extending $\varphi(\cdot, x_N)$ to $\mathbb R^{N-1}$ by periodicity define
$$
v_{2k+1}(x):=\left\{
\begin{array}{ll}
v(x) &\hbox{ if }|x_N| > \frac{1}{2(2k+1)},\\
(2 k+1)^{\frac{1}{p}}\varphi((2k+1)x )&\hbox{ if } |x_N|\leq \frac{1}{2(2k+1)}.
\end{array}
\right.
$$
\noindent We observe that $v_{2k+1} \rightharpoonup v$ in $L^p(Q;\mathbb R^m)$. Indeed, there exists $C>0$ such that for every $k \in \mathbb N$,
\begin{align*}
\int_Q |v_{2k+1}|^p dx&\leq \int_{\Sigma}\int_{|x_N|\geq \frac{1}{2 (2k+1)}}|v|^p dx +  \int_{\Sigma}\int_{|x_N|\leq \frac{1}{2(2k+1)}}(2k+1)|\varphi((2k+1)x)|^p dx\\
&\leq C + \displaystyle{\int_{\Sigma}\int_{-\frac{1}{2}}^{\frac{1}{2}}|\varphi((2k+1)x',x_N)|^p dx = C + \int_{\Sigma}\int_{-\frac{1}{2}}^{\frac{1}{2}}|\varphi(x',x_N)|^pdx \leq C}, 
\end{align*}
where the periodicity of $\varphi$ has been exploited. 
In order to achieve the weak convergence of $\{v_{2k+1}\}$ to $v$ it is enough to prove that $\lim_{k \to +\infty}\int_E v_{2k+1}dx = \int_E v dx $ for every  $E\subseteq Q$ (see \cite[Corollary 2.49]{FL}).
In fact,
\begin{equation*}
\displaystyle{\int_E(v_{2k+1}- v)dx= 
-\int_{\{x \in E:|x_N|< \frac{1}{2(2k+1)}\}}v\,dx
+ \int_{\{x \in E:|x_N|< \frac{1}{2(2k+1)}\}}(2k+1)^{\frac{1}{p}} \varphi((2k+1)x)dx.}
\end{equation*}
The first integral trivially converges to $0$ as $k \to +\infty$, while, concerning the second one, 
$$
\begin{array}{ll}
\displaystyle{\left|\int_{\{x \in E:|x_N|< \frac{1}{2(2k+1)}\}}(2k+1)^{\frac{1}{p}} \varphi((2k+1)x)
dx\right|\leq\frac{{(2k+1)}^{\frac{1}{p}}}{2k+1}\int_{\Sigma}\int_{-\frac{1}{2}}^{\frac{1}{2}} |\varphi(x',x_N)|dx,}
\end{array}
$$  
which, using periodicity of $\varphi$ and letting $k\to +\infty$ converges to 0.

\noindent Consider
$$
\begin{array}{ll}
\displaystyle{\int_Q f(\xi_{2k+1}, v_{2k+1},\nabla \xi_{2k+1})dx =\int_{\Sigma}\int_{-\frac{1}{2}}^{-\frac{1}{2(2k+1)}}f(c,v(x),0)dx+\int_{\Sigma}\int_{\frac{1}{2(2k+1)}}^\frac{1}{2}f(d, v(x),0)dx}\\
\displaystyle{ + \int_{\Sigma}\int_{|x_N|<\frac{1}{2(2k+1)}} f(\xi((2k+1)x), (2k+1)^{\frac{1}{p}}\varphi((2k+1)x), (2k+1)\nabla \xi((2k+1)x)dx.}
\end{array}
$$
The first two integrals in the right hand side, converge as $k \to +\infty$, to $\int_{Q} f(u(x),v(x),0)dx.$

The latter integral, after a change of variables becomes
$$
\begin{array}{ll}
\displaystyle{\int_{\Sigma}\int_{|x_N|<\frac{1}{2(2k+1)}} f(\xi((2k+1)x), (2k+1)^{\frac{1}{p}}\varphi((2k+1)x), (2k+1)\nabla \xi((2k+1)x))dx}\\

\displaystyle{= \frac{1}{2k+1}\int_Q f(\xi(y), (2k+1)^{\frac{1}{p}} \varphi(y), (2k+1)\nabla \xi(y))dy \to \int_{Q}f^{\infty}_p(\xi(y), \varphi(y), \nabla \xi(y))dy}
\end{array}
$$
as $k \to +\infty$. 
Putting together the last to limits we obtain \eqref{eq2jumptoprove}.

\smallskip
\noindent {\bf Step 1 b).} Let $\{(\eta_n, \varphi_n )\} \subset {\cal A}(c,d, e_N)\times L^p(Q;\mathbb R^m)$ with $\int_Q \varphi_n dy=0$ be a minimizing sequence for $K_p(0,c,d,\e_N)$.

Observe that since $K_p(0,c,d,e_N)$ is finite and $f^\infty_p$ satisfies \eqref{finftypgrowth}, then we can assume that $\{\varphi_n\}$ is bounded in $L^p(Q;\mathbb R^m)$. 
 
By \eqref{eq2jumptoprove}, for every $n \in \mathbb{N}$ we can find $k_n\in N$,  $u_{n} \in {\cal A}(c,d, e_N)$ and $v_{n} \in L^p(Q;\mathbb R^m)$ such that
$\|u_n- u\|_{L^1(Q;\mathbb R^d)}< \frac{1}{n},$ 
$\left|\int_Q (v_n- v)\psi_l dx\right|<\frac{1}{n}$, (for $l=1,\dots,n$) and $\{\psi_l\}$ a dense sequence of functions in $L^q(Q;\mathbb R^m)$, with
$$
v_{n}(x):= 
\left\{
\begin{array}{ll}
v(x)  &\hbox{ if } |x_N|> \frac{1}{2(2k_n+1)},\\
(2k_n+ 1)^{\frac{1}{p}}\varphi_n((2k_n+1)x)) &\hbox{ if } |x_N| \leq \frac{1}{2(2k_n+1)},
\end{array}
\right.
$$
and
$$
\displaystyle{\left|\int_Q f(u_n, v_n, \nabla u_n)dx- \int_Q f(u(x), v(x),0)dx - \int_Q f^\infty_p(\eta_n, \varphi_n, \nabla \eta_n)dx\right|<\frac{1}{n}}.
$$

\noindent By the lower bound inequality and the last estimate  we have 
\eqref{eqjump1}, up to a relabeling of the sequences $\{u_n\}$ and $\{v_n\}$ with the same indices $k_n$, when $\lambda=1$ and $a_0=0$.

Now we consider the case of $A:=\lambda Q$, for $\lambda >0$.
Define
\begin{equation}
\label{u0lambda}
 f_\lambda(u,v,\xi):= f\left(u,v, \tfrac{\xi}{\lambda}\right),\,
u_0:=\left\{
\begin{array}{ll}
c &\hbox{ if }x_N>0,\\
d &\hbox{ if }x_N\leq 0,
\end{array}
\right.
\hbox{ and } 
v_0(x):=v(\lambda x) \;\;\; \hbox{ for every }x\in Q.
\end{equation}
By \eqref{eqjump1} when $a_0=0$ and $\lambda=1$, there exists $(u_n,v_n) \in {\cal A}(c,d,e_N)\times L^p(Q;\mathbb R^m)$ such that $u_n \to u_0$ in $L^1(Q;\mathbb R^d)$, $v_n \rightharpoonup v_0$ in $L^p(Q;\mathbb R^m)$ and
\begin{equation}
\label{flambdatoKplambda}
\displaystyle{\int_Q f_\lambda (u_n, v_n, \nabla u_n)dx \to \int_Q f_\lambda(u_0(x), v_0(x),0)dx + (K_p)_{\lambda}(0,c,d,e_N)},
\end{equation}
where $(K_p)_\lambda$ is the function defined in \eqref{Kp}, with $f$ replaced by $f_\lambda$ above. 
Consider any $a_0 \in \mathbb R^N$ and set
\begin{equation}
\label{unlambda}
\bar{u}_n(x):= u_n\left(\tfrac{x-a_0}{\lambda}\right), \;\;\; \bar{v}_n(x):= v_n\left(\tfrac{x-a_0}{\lambda}\right), \;\;\;\; x \in a_0+\lambda Q.
\end{equation}
Clearly $\{{\bar u}_n\}$ meets the boundary conditions, and is periodic in the $e_1, \dots, e_{N-1}$ directions with period $\lambda$. 
Moreover, $\|{\bar u}_n -u\|_{L^1(a_0+\lambda Q)} \to 0$ and ${\bar v}_n \rightharpoonup v$ in $L^p(a_0+\lambda Q;\mathbb{R}^m)$
and 
\begin{equation}
\label{fnto}
\begin{array}{ll}
\displaystyle{\int_{a_0 +\lambda Q}f({\bar u}_n, {\bar v}_n, \nabla {\bar u}_n)dx =\int_{a_0+\lambda Q} f(u_n\left(\tfrac{x-a_0}{\lambda}\right), v_n\left(\tfrac{x-a_0}{\lambda}\right),\tfrac{1}{\lambda}\nabla u_n\left(\tfrac{x-a_0}{\lambda}\right))dx}\\
\displaystyle{= \lambda^N\int_Q f_\lambda(u_n(y),  v_n(y),\nabla  u_n(y))dy \to \lambda^N \int_Q f_\lambda (u_0(y), v_0(y),0)dy + \lambda^N (K_p)_{\lambda}(0,c,d,e_N),}
\end{array}
\end{equation}
as $n\to +\infty$. Moreover,
\begin{equation}
\label{Kplambda}
{\displaystyle \lambda^N\int_Q f_\lambda(u_0(y), v_0(y), 0)dy = \int_{a_0+\lambda Q}f(u(x),v(x),0)dx,
\hbox{ and }
(K_p)_{\lambda}(0,c,d, e_N)=
\frac{1}{\lambda}K_p(0,c,d,e_N).}
\end{equation}
Hence we obtain \eqref{eqjump1}. 
\smallskip
\noindent{\bf Step 1 c).} We allow $f$ to have explicit $x$-dependence.
Let $A $ be an on open subset of $\Omega$ and $A^\ast:=\alpha +\lambda Q_\nu \subset \subset A$ for some $\alpha \in \mathbb R^N, \lambda >0$.   
Without loss of generality, we may assume that $a_0=0$ and $\nu=e_N$.
We denote $Q_\nu$ by $Q$ and we let $A':=\{x \in A^\ast: x_N=0\}$ and $Q':=\{x \in Q: x_N=0\}$. Since $A^\ast$ is compactly included in $A$, fixing $\varepsilon >0$ it is possible to find $\delta>0$ such that $(H_2)_p$ and Proposition \ref{Lemma2.15FM} $(e)$ hold uniformly in $A^\ast$, i.e.
\begin{equation}
\label{4.3BF}
\displaystyle{x,y \in A^\ast, |x-y|<\delta \Rightarrow |f(x,u,b, \xi) -f(y,u,b,\xi)|\leq \varepsilon C(1+|\xi|+|b|^p)}
\end{equation}
and
\begin{equation}
\label{4.4BF}
\displaystyle{x,y \in A^\ast, |x-y|<\delta \Rightarrow |K_p(x,b,c,d, \nu)- K_p(y,b,c,d,\nu)|\leq \varepsilon C(1+|d-c|+|b|^p).}
\end{equation}
Let $h\in\mathbb N$ be such that 
\begin{equation}
\label{4.5BF}
\eta:=\frac{\lambda}{h}<\delta
\end{equation}
and partition $A'$ into $h^{N-1}$ $(N-1)$-dimensional cubes, aligned according to the coordinate axes and with mutually disjoint interiors. Namely,
\begin{equation}
\label{4.6}
A'=\bigcup_{i=1}^{h^{N-1}}(a_i+\eta{\overline Q'}).
\end{equation}

\noindent Denoting $Q'_i:=a_i+\eta Q'$ and $Q_i:= a_i+\eta Q,$ we claim that there exist $\{u_k\} \subset W^{1,1}(A^\ast;\mathbb R^d)$ and $\{v_k\} \subset L^p(A^\ast;\mathbb R^m)$ such that $u_k \to u$ in $L^{1}(A;\mathbb R^d)$, $v_k \rightharpoonup v$ in $L^p(A;\mathbb R^m)$ and 
\begin{equation}
\label{4.7BF}
\displaystyle{\lim_{k \to +\infty}\int_{A^\ast}f(x, u_k, v_k,\nabla u_k)dx = \int_{J_u \cap A^\ast}K_p(x,0,c,d,e_N)d{\cal H}^{N-1}+ \int_{A^\ast}f(x,u(x), v(x),0)dx.}
\end{equation}
By Step 1 b), there exist sequences $\{u_k^{(1)}\}\subset {\cal A}(c,d,e_N)$, related to the cube $Q_1$ and $\{v_k^{(1)}\}\subset L^p(Q_1;\mathbb R^m)$, such that 
$$
\displaystyle{\lim_{k\to +\infty}\int_{Q_1}f(a_1, u_k^{(1)}, v_k^{(1)},\nabla u_k^{(1)})dx = \eta^{N-1}K_p(a_1,0,c,d,e_N)+\int_{Q_1}f(a_1, u(x), v(x),0)dx.}
$$
By Remark \ref{3.17FMrrem} $i)$, there exist subsequences, not relabeled, $\{\xi_k^{(1)}\}\subset W^{1,1}(Q_1;\mathbb R^d)$ and $\{\overline{v}_k^{(1)}\} \subset L^p(Q_1;\mathbb R^m)$ such that $\xi_k^{(1)}\to u$ in $L^1(Q_1;\mathbb R^d) $, with $\xi_k^{(1)}(x)= U_k^{(1)}((x-a_1)/\eta)$ on $\partial Q_1$, ($U_k^{(1)}$ is a mollification of $u$), $\overline{v}_k^{(1)}\rightharpoonup v$ in $L^p(Q_1;\mathbb R^m)$ and
\begin{align*}
\limsup_{k\to +\infty}\int_{Q_1}f(a_1, \xi_k^{(1)}, \overline{v}_k^{(1)},\nabla \xi_k^{(1)})dx &\leq \liminf_{k \to +\infty}\int_{Q_1}f(a_1, u_k^{(1)}, v_k^{(1)},\nabla u_k^{(1)})dx\\
&=\eta^{N-1}K_p(a_1,0,c,d,e_N)+\int_{Q_1}f(a_1, u(x), v(x),0)dx. \notag
\end{align*}
By the lower bound inequality proved in the previous section and the above estimate we have
$$
\displaystyle{\lim_{k\to +\infty}\int_{Q_1}f(a_1, \xi_k^{(1)}, \overline{v}_k^{(1)},\nabla \xi_k^{(1)})dx =\eta^{N-1}K_p(a_1,0,c,d,e_N)+\int_{Q_1}f(a_1, u(x), v(x),0)dx.}
$$
By induction we can repeat the same argument, obtaining $h^{N-1}$ further subsequences $\{k\}$ and corresponding sequences $\{\xi^{(j)}_k\}\subset {\cal A}(c,d, e_N)$ related to the cube $Q_j$, with $\xi^{(j)}_k \to u$ in $L^1(Q_j;\mathbb R^d)$, $\xi^{(j)}_k =U^{(j)}_k$ on $\partial Q_j$ and $\{\overline{v}^{(j)}_k\}\subset L^p(Q_j;\mathbb R^m)$, with $\overline{v}^{(j)}_k  \rightharpoonup v$ in $L^p(Q_j;\mathbb R^m)$ and for every $j=1,\dots, h^{N-1},$
$$
\begin{array}{ll}
\displaystyle{\lim_{k\to +\infty}\int_{Q_j}f(a_j, \xi_k^{(j)}, \overline{v}_k^{(j)},\nabla \xi_k^{(j)})dx=\eta^{N-1}K_p(a_j,0,c,d,e_N)+\int_{Q_j}f(a_j, u(x), v(x),0)dx.}
\end{array}
$$
Next we take the $h^{N-1}$ subsequence and for all $j=1,\dots, h^{N-1}$ we consider sequences $\{\zeta_k\}$ and $\{\tilde{v}_k\}$, defined in $\cup_{j=1}^{h^{N-1}}Q_j$ with $\zeta_k=\xi^{(j)}_k$, $\tilde{v}_k=\overline{v}^{(j)}_k$ on $Q_j$, such that for every $j=1,\dots, h^{N-1}$.
\begin{align}\label{4.9BF}
\displaystyle{\lim_{k\to +\infty}\int_{Q_j}f(a_j, \zeta_k, \tilde{v}_k,\nabla \zeta_k)dx=\eta^{N-1}K_p(a_j,0,c,d,e_N)+\int_{Q_j}f(a_j, u, v,0)dx.}
\end{align}
Define the sequences $\{u_{k,\varepsilon}\}$ and $\{v_{k,\varepsilon}\}$ almost everywhere on $A^\ast$, as follows
\begin{equation}
\label{ukepsilon}
u_{k,\varepsilon}(x):=\left\{
\begin{array}{ll}
\zeta_k(x )& \hbox{ if } x\in \cup_{j=1}^{h^{N-1}}Q_j,\\
d & \hbox{ if }x_N >\eta/2,\\
c & \hbox{ if }x_N <-\eta/2,
\end{array}
\right.
\;\;\;\;\;
v_{k,\varepsilon}(x):=\left\{
\begin{array}{ll}
\tilde{v}_k(x) &\hbox{ if }x \in \cup_{j=1}^{h^{N-1}}Q_j,\\
v(x) & \hbox{ if } |x_N|> \eta/2.
\end{array}
\right.
\end{equation}

\noindent Clearly, since $\zeta_k = U^{(j)}_k$ on $\partial Q_j$ and $U^{(j)}_k(x)=d$ (respectively $c$) for $x_N=\eta/2$ (respectively for $x_N=-\eta/2$), $u_{k,\varepsilon}\in W^{1,1}(A^\ast;\mathbb R^d)$. 
Also $\tilde{v}_k \in L^p(A^\ast;\mathbb R^m)$ and moreover it coincides with $v(x)$ if $|x_N|>\eta/2$. Furthermore, we have 
\begin{equation}
\label{ukeconv}
\displaystyle{\lim_{\varepsilon\to 0^+}\lim_{k\to +\infty} \|u_{k,\varepsilon}-u\|_{L^1(A^\ast;\mathbb R^d)} =0}
\end{equation}
and $v_{k,\varepsilon }\rightharpoonup v$ in $L^p(A^\ast;\mathbb R^m)$ as $k \to +\infty$ and as $\varepsilon \to 0$.
Also,
\begin{align}
&\displaystyle{\int_{A}f(x,u_{k,\varepsilon}, v_{k,\varepsilon}, \nabla u_{k,\varepsilon}) dx=\sum_{i=1}^{h^{N-1}}\int_{Q_i}f(a_i,\zeta_k, \tilde{v}_k, \nabla \zeta_k) dx+\sum_{i=1}^{h^{N-1}}\int_{Q_i}(f(x,\zeta_k, \tilde{v}_k, \nabla \zeta_k) -f(a_i,\zeta_k, \tilde{v}_k, \nabla \zeta_k))dx}
 \notag\\
&\displaystyle{+\int_{A^\ast\cap \{x_N>\eta/2\}} f(x,d, v(x),0)dx +\int_{A^\ast\cap \{x_N<-\eta/2\}} f(x,c, v(x),0)dx =:I_1+I_2+I_3+I_4.}\label{I1234}
 \end{align}
 Then it is easily seen that by \eqref{4.9BF}, we have
 $$
 \displaystyle{\lim_{k \to +\infty}I_1 = \sum_{i=1}^{h^{N-1}}\left( \eta^{N-1} K_p(a_i, 0,c,d, e_N)+\int_{Q_i}f(a_i,u(x),v(x),0)dx\right).}
 $$
 Moreover,
 
 \begin{equation}
 \label{I34}
 \displaystyle{\lim_{k \to +\infty}(I_3+I_4)= \int_{A^\ast\cap \{x_N>\eta/2\}} f(x,d, v(x),0)dx +\int_{A^\ast\cap \{x_N<-\eta/2\}} f(x,c, v(x),0)dx.}
 \end{equation}
 Regarding $I_2$, by \eqref{4.3BF}, $(H_1)_p$ and since, by construction, the sequences $\{\tilde{v}_k\}$ and $\{\zeta_k\}$ are bounded  in $L^p(\cup_{i=1}^{h^{N-1}}Q_i;\mathbb R^m)$ and in $W^{1,1}(\cup_{i=1}^{h^{N-1}}Q_i;\mathbb R^d)$ respectively, we have
 $$
 \begin{array}{ll}
 \displaystyle{\limsup_{k \to ì\infty}I_2 \leq \limsup_{k \to +\infty} \sum_{i=1}^{h^{N-1}}\int_{Q_i}\varepsilon C(1+ |\tilde{v}_k|^p + |\nabla \zeta_k|)dx = O(\varepsilon), }
 \end{array}
 $$
By \eqref{4.4BF} we have 
 \begin{align*}
 &\left| \int_{A^\ast \cap J_u} K_p(x,0, c,d, e_N)d {\cal H}^{N-1}-\eta^{N-1}\sum_{i=1}^{h^{N-1}} K_p(a_i,0, c,d, e_N)\right|\\
 &\leq \sum_{i=1}^{h^{N-1}} \int_{Q_i}\left|K_p(x,0, c,d, e_N)-K_p(a_i,0, c,d, e_N)\right| d {\cal H}^{N-1}=O(\varepsilon).
 \end{align*}
Finally, putting together, this estimate, the limits of $I_2, I_3, I_4$ and estimating $\sum_{i=1}^{h^{N-1}}\int_{Q_i}f(a_i,u,v,0)dx$ in $I_1$ via \eqref{4.3BF}, we obtain the desired approximating sequence, just letting $\varepsilon \to 0^+$ and using a diagonalization procedure.
 Thus we have proved \eqref{5.20FM2} when $u$ has a planar interface and $A^\ast$ is a cube.
 
\smallskip 
\noindent{\bf Step 1 d).} Now let $A^\ast$ be an open subset of $\Omega$ such that
 \begin{equation}
 \label{4.10BF}
\displaystyle{\lim_{j\to +\infty}\lim_{\varepsilon \to 0^+}\frac{1}{\varepsilon} {\cal L}^N\left(\left\{x \in A^\ast: {\rm dist}(x, \partial^\ast E\cap \overline{A^\ast})< \varepsilon, {\rm dist}(x, \partial A^\ast)<\tfrac{1}{j}\right\}\right)=0.}
 \end{equation}
 
 With $u$ defined as in Step 1 c), we claim that, given any sequence $\varepsilon_n \to 0^+$, there exists a subsequence $\{\varepsilon_{n_k}\}$, a sequence $\{u_k\}\subset W^{1,1}(A^\ast;\mathbb R^d)$ and $\{v_k\}\subset L^p(A^\ast;\mathbb R^m)$ such that $u_k\to u$ in $L^1(A^\ast;\mathbb R^m)$ and $v_k \rightharpoonup v$ in $L^p(A^\ast;\mathbb R^m)$ and
 $$
 \displaystyle{\lim_{k\to +\infty} \int_{A^\ast}f(x, u_k,v_k, \nabla u_k)dx=\int_{A^\ast}f(x,u(x), v(x),0)dx + \int_{\partial^\ast E \cap A^\ast}K_p(x,0,c,d,e_N)d {\cal H}^{N-1}.}
 $$   
 
 By Whitney covering theorem we may write
 $$
 \displaystyle{A^\ast=\bigcup_{i=1}^\infty (a_i+ \delta_i \overline{Q}) =: \bigcup_{i=1}^\infty\overline{Q_i},}
 $$
 where $\delta_i, {\rm diam}Q_i, {\rm dist}(Q_i,\partial A^\ast)$ , ${\rm dist}(a_i,\partial A^\ast)$ satisfy  \cite[(4.11) and (4.12)]{BF}.
\noindent We choose $L>0$ as in \cite[page 552]{BF}
and introduce 
\begin{equation*}
\Omega_j:=\left\{x\in A^\ast: {\rm dist}(x,\partial A^\ast)\geq \tfrac{1}{j}\right\},\,
{\cal G}_j:=\left\{Q_i:{\rm dist}(a_i,\partial A^\ast )<\tfrac{1}{Lj}\right\},\,\,
{\cal F}_j:=\left\{Q_i:{\rm dist}(a_i,\partial A^\ast) \geq\tfrac{1}{Lj}\right\}.
\end{equation*}

For every $j$, ${\cal F}_j$ is a finite family of cubes, the choice of $L$, provides that if $Q_i\in {\cal G}_j$ then $Q_i\cap \Omega_j=\emptyset,$ so that $\Omega_j \subset \cup{\cal F}_j$ and thus $\Omega_j$ is covered by a finite number of cubes $Q_i$ (see \cite{BF} for details).  

By Step 1 c)  given the sequence $\varepsilon_n \to 0^+$, there exists a subsequence $\{\varepsilon_k^{(1)}\}$ and sequences $\{u_k^{(1)}\} $ in $W^{1,1}(Q_1;\mathbb R^d)$ and $\{v_k^{(1)}\}$ in $L^p(Q_1;\mathbb R^m)$ such that $u_k^{(1)}\to u$ in $L^1(Q_1:\mathbb R^d),$ $v_k^{(1)}\rightharpoonup v $ in $L^p(Q_1;\mathbb R^m)$  and
$$
\displaystyle{\lim_{k\to +\infty} \int_{Q_1}f(x, u^{(1)}_k,v^{(1)}_k,\nabla u^{(1)}_k)dx =\int_{\partial^\ast E\cap Q_1}K_p(x,0,c,d,e_N)d {\cal H}^{N-1}+\int_{Q_1}f(x,u(x),v(x),0)dx}.
$$
By i) in Remark \ref{3.17FMrrem} there exists a subsequence, still denoted by $k$, $\{w_k^{(1)}\} \subset W^{1,1}(Q_1,\mathbb R^d)$,  and $\{\tilde{v}^{(1)}_k\}\subset L^p(Q_1;\mathbb R^m)$, such that $w_k^{(1)}\to u$ in $L^1(Q_1;\mathbb R^d)$ and $w_k^{(1)}(x)=U_k^{(1)}((x-a_1)/\delta_1)$ for every $x \in \partial Q_1$, where the latter functions are mollifications of $u$, with $\tilde{v}^{(1)}_k\rightharpoonup u$ in $L^p(Q_1;\mathbb R^m)$ and
$$
\begin{array}{ll}
\displaystyle{\limsup_{k\to +\infty} \int_{Q_1}f(x, w^{(1)}_k,\tilde{v}^{(1)}_k,\nabla w^{(1)}_k)dx\leq}
\displaystyle{\liminf_{k\to +\infty} \int_{Q_1}f(x, u^{(1)}_k,v^{(1)}_k,\nabla u^{(1)}_k)dx }
\\
\displaystyle{=\int_{\partial^\ast E\cap Q_1}K_p(x,0,c,d,e_N)d {\cal H}^{N-1}+\int_{Q_1}f(x,u(x),v(x),0)dx.}
\end{array}
$$
This together with the lower bound inequality proved in the previous subsection gives
$$
\displaystyle{\limsup_{k\to +\infty} \int_{Q_1}f(x, w^{(1)}_k,\tilde{v}^{(1)}_k,\nabla w^{(1)}_k)dx=\int_{\partial^\ast E\cap Q_1}K_p(x,0,c,d,e_N)d{\cal H}^{N-1}+\int_{Q_1}f(x,u(x),v(x),0)dx.}
$$
By repeatedly taking subsequences and applying Step 1 c) and $(i)$ in Remark \ref{3.17FMrrem}, following the same arguments as above, since in ${\cal F}_j$ there are only finitely many cubes, it is possible to obtain a sequence $\{\varepsilon_k\}$ of $\{\varepsilon_n\}$ and sequences $\{\xi^{(i)}_{k}\}\subset W^{1,1}(Q_i;\mathbb R^d)$ and in ${\cal A}(c,d, e_N)$ relative to the cube $Q_i$, such that $\xi^{(i)}_k\to u$ in $L^1(Q_i;\mathbb R^d)$ and  $\xi^{(i)}_k(x)=U_{\varepsilon_{k}}((x-a_i)/\delta_i)$ for $x \in \partial Q_i$, and $\{\overline{v}^{(i)}_{k}\}\subset L^p(Q_i;\mathbb R^m)$, such that $\overline{v}^{(i)}_{k} \rightharpoonup v$ in $L^p(Q_i;\mathbb R^m)$ such that
\begin{equation}
\label{4.13BF}
\displaystyle{\lim_{k\to +\infty} \int_{Q_i}f(x, \xi^{(i)}_k,\overline{v}^{(i)}_k,\nabla \xi^{(i)}_k)dx=\int_{\partial^\ast E\cap Q_i}K_p(x,0,c,d,e_N)d{\cal H}^{N-1}+\int_{Q_i}f(x,u(x),v(x),0)dx,}
\end{equation}
for every $Q_i$ in ${\cal F}_j$. Denote by $\{\zeta_k\}$ and $\{\tilde{v}_k\}$ the sequences defined in $\cup_{\{i:Q_i\in {\cal F}_j\}} Q_i$, such that $\zeta_k:=\xi_k^{(i)}$  and $\tilde{v}_k:=\overline{v}_k^{(i)}$ in $Q_i.$ Next we define the sequences
\begin{equation*}
u_k(x):=\left\{
\begin{array}{ll}
\zeta_k(x) &\hbox{ if }x \in \cup_{\{i:Q_i\in {\cal F}_j\}} Q_i,\\
U_{\varepsilon_k}(x) &\hbox{ otherwise,}
\end{array}
\right.
\;\;\;\;
\hbox{ and }
v_k(x):=\left\{
\begin{array}{ll}
\tilde{v}_k(x) &\hbox{ if }x \in \cup_{\{i:Q_i\in {\cal F}_j\}} Q_i,\\
v(x) &\hbox{ otherwise.}
\end{array}
\right.
\end{equation*}

\noindent Clearly, $u_k \to u$ in $L^1(A^\ast;\mathbb R^d)$ and $v_k \rightharpoonup v$ in $L^p(A^\ast;\mathbb R^m)$.
Moreover, recalling that $v \in L^\infty(\Omega;\mathbb R^m)$, $\|U_{\varepsilon_k}\|_{L^\infty}\leq C$, $\|\nabla U_{\varepsilon_k}\|_{L^\infty} = O(1/\varepsilon_{n_k})$ and since $x \in A^\ast \setminus \{Q_i: Q_i \in {\cal F}_j\}$ implies that ${\rm dist}(x, \partial A^\ast)< \frac{1}{j}$, we have, by \eqref{4.13BF} and $(H_1)_p$,

\begin{align*}
&\displaystyle{\limsup_{k \to +\infty}\int_{A^\ast}f(x, u_k, v_k, \nabla u_k)dx \leq C\limsup_{k\to +\infty} \left(\tfrac{1}{\varepsilon_k}+1\right){\cal L}^N(\{x\in A^\ast: |x_N|<\varepsilon_k, {\rm dist}(x,\partial A^\ast)<\tfrac{1}{j}\}})\\
&\displaystyle{+\sum_{Q_i\in {\cal F}_j}\left(\int_{\partial E^\ast \cap Q_i}K_p(x, 0,c,d,e_N)d{\cal H}^{N-1}+ \int_{Q_i}f(x, u(x),v(x),0)dx\right)}\\
&\displaystyle{\leq C\limsup_{k\to +\infty} \left(\tfrac{1}{\varepsilon_k}+1\right){\cal L}^N\left(\left\{x\in A^\ast: |x_N|<\varepsilon_k, {\rm dist}(x,\partial A^\ast)<\tfrac{1}{j}\right\}\right)}\\
&\displaystyle{+\int_{\partial E^{\ast}\cap A^\ast}K_p(x, 0,c,d,e_N)d{\cal H}^{N-1}+\int_{A^\ast}f(x,u(x),v(x),0)dx.}
\end{align*}
Thus taking the limit as $j \to +\infty$ by \eqref{4.10BF} and the lower bound inequality we conclude that
$$
\displaystyle{\limsup_{k\to +\infty}\int_{A^\ast}f(x,u_k,v_k,\nabla u_k)dx\leq \int_{\partial E^\ast \cap A^\ast}K_p(x,0,c,d,e_N)dx +\int_{A^\ast}f(x,u,v,0)dx. }
$$
\noindent{\bf Step 2.} Now we assume that $u$ has a polygonal interface, i.e. $u:=\chi_E c+ (1-\chi_E)d$, where $E \subset \Omega$ is of the form $E=E' \cap \Omega$, $\partial^\ast E\cap \Omega= \partial^\ast E'\cap \Omega$, with $E'$ a polyhedral set. 
 
As in Step 1 d), let $A^\ast$ be an open subset of $\Omega$ such that \eqref{4.10BF} holds. 
We claim that there exists a sequence $\{u_k\}$ in $W^{1,1}(A^\ast;\mathbb R^d)$ and a sequence $\{v_k\}$ in $L^p(A^\ast;\mathbb R^m)$ such that $u_k\to u$ in $L^1(A^\ast;\mathbb R^d)$, $v_k \rightharpoonup v$ in $L^p(A^\ast;\mathbb R^m)$ and 
$$
\displaystyle{\lim_{k\to +\infty}\int_{A^\ast} f(x,u_k,v_k,\nabla u_k)dx =\int_{A^\ast}f(x,u,v,0)dx + \int_{\partial E^\ast \cap A^\ast}K_p(x,0,c,d,\nu)d{\cal H}^{N-1}.}
$$
The claim is achieved following a proof entirely similar to \cite[Section 5, Step 3]{FM2}. It relies on an induction argument and on the application of Step 1 c) and on a slicing procedure similar to Lemma \ref{3.1FMr} in order to connect recovery sequences between two domains for the $u$ and the $v$.

\smallskip
\noindent{\bf Step 3.} Finally, let $A$ be a Lipschitz subdomain of $\Omega$ and consider an arbitrary $u:=\chi_E c+ (1-\chi_E)d$ with ${\rm Per}(E;A)<+\infty$. Since $\partial E$ is Lipschitz, by Theorem \ref{Baldo} there exist polyhedral sets $E_k$ such that $\chi_{E_k}\to \chi_E$ in $L^1(A)$, ${\rm Per}_(E_k;A)\to {\rm Per}_(E;A)$, ${\cal L}^N(E_k)={\cal L}^N(E)$ and ${\cal H}^{N-1}(\partial^\ast E_k\cap \partial \Omega)=0$ so that
$$
\displaystyle{\lim_{j\to +\infty}\lim_{\varepsilon \to 0^+}\frac{1}{\varepsilon}{\cal L}^N(\{x \in A: {\rm dist}(x, \partial^\ast E_k\cap \overline{A})< \varepsilon, {\rm dist}(x, \partial A)<1/j\})=0.}
$$
By Step 2, for every $k$ there exist $\{u_n^{(k)}\}\subset L^1(A;\mathbb R^d)$, such that
$u_n^{(k)}\to \chi_{E_k}c+ (1-\chi_{E_k})d$ in $L^1(A;\mathbb R^d)$ as $n\to +\infty$, and $\{v_n^{(k)}\}\subset L^p(A;\mathbb R^m)$ such that $v_n^{(k)}\rightharpoonup v$ in $L^p(A;\mathbb R^m)$ and
$$
\displaystyle{\lim_{n\to +\infty}\int_{A}f(x, u_n^{(k)},v_n^{(k)},\nabla u_n^{(k)})dx\leq \int_{A}f(x,u,v,0)dx +\int_{\partial^\ast E_k\cap A}K_p(x,0,c,d,\nu_k)d{\cal H}^{N-1},}
$$ 
where $\nu_k$ is the measure theoretic unit normal to $\partial^\ast E_k$ at $x$.

Regarding the weak convergence of $v_n^{(k)}$ to $v$, observe that $L^{p'}(A;\mathbb R^m)$ is separable, hence we can take a dense sequence of functions $\{\psi_l\} \subset L^{p'}(\Omega;\mathbb R^m)$ such that
$\lim_{n\to +\infty}\int_{A}(v_n^{k}-v)\psi_l dx =0 \hbox{ for every }l \in \mathbb N$
Consider an increasing sequence $\{k\}$ such that
$$
\displaystyle{\|u_{n(k)}^{(k)}-\chi_{E_k}c+ (1-\chi_{E_k})d\|_{L^1(A;\mathbb R^d)}< \frac{1}{k}, \hbox{ and } \left|\int_{A}(v_{n(k)}^{(k)}-v)\psi_l dx\right|<\frac{1}{k} \hbox{ for every }l=1,\dots, k,}
$$
and
$
\displaystyle{\left|\int_{\partial^\ast E_k \cap A}K_p(x,0,c,d,\nu_k)d{\cal H}^{N-1}-\left(\int_{A}f(x,u,v,0)dx +\int_{A}f(x, u_{n(k)}^{(k)}, v_{n(k)}^{(k)},\nabla u_{n(k)}^{(k)})dx\right)\right|<\frac{1}{k}.}
$

\noindent Set $\overline{u}_k=u_{n(k)}^{(k)}$ and $\overline{v}_k=\overline{v}_{n(k)}^{(k)}$, then $
\overline{u}_k\to \chi_E c + (1-\chi_E)d \hbox{ in }L^1(A;\mathbb R^d).$ 
 Moreover, by the growth condition on $f$ and the bounds on $K_p$ it results that $\|v_k\|_{L^p(A;\mathbb R^m)}\leq \frac{1}{k}+ C$ and the density of $\{\psi_l\} $ in $L^{p'}(A;\mathbb R^m)$ ensures that $v_k \rightharpoonup v$ in $L^p(A;\mathbb R^m)$.
Furthermore, recall that for every continuous function $g:A\times \mathbb R^d\to [0,+\infty)$ we have (see \cite{AFP})
$$
\displaystyle{\int_{\partial^\ast E_k\cap A}g(x, \nu_k(x))d{\cal H}^{N-1}\to\int_{\partial^\ast E\cap A}g(x, \nu(x))d{\cal H}^{N-1} .} 
$$

\noindent Since by $(b)$ in Proposition \ref{Lemma2.15FM} $K_p(\cdot,0,c,d, \cdot)$ is upper semicontinuous, there exist continuous functions $g_m:A \times \mathbb R^N\to [0,+\infty)$ such that
$$
K_p(x,0,c,d,\xi)\leq g_m(x,\xi)\leq C|\xi|  
\hbox{ and }
K_p(x,0,c,d,\xi)=\inf_{m}g_m(x,\xi)
$$
for every $(x,\xi)\in A\times \mathbb R^N$, where $K_p(x,0,c,d,\cdot)$ has been extended as a positively one homogeneous function to $\mathbb R^N$.
Thus for all $m \in \mathbb N$, it results
$$
\displaystyle{\limsup_{k\to +\infty} \int_{A}f(x, \overline{u}_k, \overline{v}_k,\nabla \overline{u}_k)dx}
\displaystyle{\leq\int_{A}f(x,u,v,0)dx + \int_{\partial^\ast E\cap \Omega}g_m(x,\nu(x))d{\cal H}^{N-1}.}
$$
Taking the limit when $m\to +\infty$, using Lebesgue's monotone convergence theorem and the lower bound inequality we obtain 
$$
\begin{array}{ll}
\displaystyle{\lim_{k \to +\infty}\int_{\Omega}f(x,\overline{u}_k,\overline{v}_k,\nabla \overline{u}_k)dx =\int_{A}f(x,u,v,0)dx + \int_{\partial^\ast E\cap A}K_p(x,0,c,d,\nu)d{\cal H}^{N-1}.}
\end{array}
$$

\noindent{\bf Step 4.} Let $A$ be any domain in $\Omega$. For any $K$ compact subset of $A$ we can find a Lipschitz domain $A'$ such that
$K \subset A' \subset A$,  (see \cite[Remark 5.5]{BZ} and \cite[Chapter 6]{S}) and
\begin{align*}
{\cal J}_p(u,v;K)&\leq {\cal J}_p(u,v,A')\leq \int_{A'}f(x,u,v,0)dx + \int_{A'\cap \partial^\ast E}K_p(x,0,c,d,\nu)d{\cal H}^{N-1}\\
&\leq\int_{A}f(x,u,v,0)dx + \int_{A\cap \partial^\ast E}K_p(x,0,c,d,\nu)d{\cal H}^{N-1}.
\end{align*}

\noindent By the inner regularity of ${\cal J}_p(u,v;\cdot)$, it results
$$
{\cal J}_p(u,v;A)=\sup{\{{\cal J}_p(u,v, K):  K\subset A, K  \hbox{ compact}\}}\leq \int_{A}f(x,u,v,0)dx + \int_{A\cap \partial^\ast E}K_p(x,0,c,d,\nu)d{\cal H}^{N-1}.
$$
The additivity of ${\cal J}_p(u,v;\cdot)$ allows us to consider any open subset of $\Omega$, not necessarily connected.
The lower bound inequality provides 
$$
{\cal J}_p(u,v;A)=\int_A f(x,u,v,0)dx + \int_{\partial^\ast E\cap A}K_p(x,0,c,d,\nu)d{\cal H}^{N-1}.
$$
Moreover,
\begin{align*}
{\mathcal J}_p(u,v; E) &\leq \inf\{{\mathcal J}_p(u,v;A): A \subset \Omega \text { is open}, E\subset A \}\\
&\leq\inf\left\{\int_A f(x,u,v,0)dx + \int_{\partial^\ast E\cap A}K_p(x,0,c,d,\nu)d{\cal H}^{N-1}: A \subset \Omega \text { is open}, E \subset A\right\}\\
&=\int_E K_p(x,0,c,d,\nu)d{\cal H}^{N-1},
\end{align*}
where $\nu$ is the normal to $E$.

\noindent{\bf Case 2.} Consider $u: =\sum c_i \chi_{E_i}$, where $\{E_i\}_{i=1}^\infty$ forms a partitions of $\Omega$ into sets of finite perimeter. The proof follows along the lines of \cite[Proposition 4.8, Step 1]{AMT}, since the representation for the surface term is independent on the the target function $v$.  
Thus \eqref{upperboundLp} follows for every $u \in BV (\Omega;T)$ with $T$ a finite subset of $\mathbb R^d$ and $v \in L^\infty(\Omega;\mathbb R^m)$.

\noindent{\bf Case 3.} Let $u\in BV(\Omega;\mathbb R^d)\cap L^\infty(\Omega;\mathbb R^d)$. As in Case 2 the fact that the integral representation is in terms of $K_p$ evaluated at $v=0$, allows us to follow the same arguments in  \cite[Proposition 4.8, Step 2]{AMT}, exploiting property $(c)$ in Proposition \ref{Lemma2.15FM}. Hence, by \eqref{absolutely-continuousUB} and \eqref{Cantor-densityUB}, \eqref{representationLp} holds. 
 
\medskip
\noindent{\bf Part 2.} Let $u \in BV(\Omega;\mathbb R^d)$ and $v \in L^\infty(\Omega;\mathbb R^m)$. As in \cite[Section 5, Step 4]{FM2}, the lower semicontinuity of $\mathcal{J}_p$ and the result achieved in Part 1 provides
\begin{equation*}
\begin{array}{ll}
\displaystyle{{\cal J}_p(u,v;\Omega)\leq \liminf_{i\to +\infty}{\cal J}_p(\phi_i(u),v;\Omega)=\liminf_{i\to +\infty}\left\{\int_{\Omega}f(x, (\phi_i(u)),v,(\nabla \phi_i(u)))dx\right.}
\\
\displaystyle{\left. +\int_{J_{\phi_i(u)}}K_p(x,0, (\phi_i(u))^+,(\phi_i(u))^-, \nu_{\phi_i} )d {\cal H}^{N-1}+\int_\Omega f_p^\infty\left(x, \phi_i(u), 0,\tfrac{d D^c \phi_i(u)}{d |D^c \phi_i(u)|}\right)dx\right\}.}
\end{array}
\end{equation*}
where $\phi_i\in W^{1,\infty}_0(\mathbb R^d;\mathbb R^d)$ such that
$\phi_i(\xi)=\left\{
\begin{array}{ll} x, &|\xi|<a_i,\\
0, & |\xi|\geq a_i,
\end{array}
\right.
$
with the sequence $\{a_i\}\subset \mathbb R^+$ such that $a_i\to +\infty $ as $i \to +\infty$, and $\|\nabla \phi_i\|_{\infty}\leq 1$.
Then \eqref{representationLp} holds for every $u \in BV(\Omega;\mathbb R^d)$,
passing to the limit as $i\to +\infty$, exploiting $(c)$ in Proposition \ref{Lemma2.15FM}, $(H_1)_p$, \eqref{finftypgrowth}, and the lower bound inequality.
 
\smallskip
 
\noindent{\bf Part 3.} Concerning the case $BV(\Omega;\mathbb R^d) \times L^p(\Omega;\mathbb R^m)$ we follow a standard truncation argument, defining for every positive real number $L$, $\tau_L: [0,+\infty) \to [0, +\infty)$, as
$
\tau_L(t):=\left\{
\begin{array}{ll}
t &\hbox{ if } 0 \leq t < L,\\
0 &\hbox{ if } t \geq L.
\end{array}
\right.
$
For every $v \in L^p(\Omega;\mathbb R^m)$, define $v_L:= \tau_L(|v|)v$, thus $v_L \in L^\infty(\Omega;\mathbb R^m)$,  $\int_\Omega |v_L|^p dx \leq \int_\Omega |v|^p dx$ and $v_L \to v$ in $L^p(\Omega;\mathbb R^m)$, as $L \to +\infty$. By the lower semicontinuity of $\overline{J}_p$ and Part 2,   we have that
$$
\displaystyle{\overline{J}_p(u,v)\leq \liminf_{L \to +\infty} \left(\int_\Omega f(x,u, v_L, \nabla u)dx+ \int_{J_u}K_p(x,0,u^+,u^-, \nu_u) d{\cal H}^{N-1}+\int_{\Omega}f^\infty\left(x, u,0,\tfrac{d D^c u}{d |D^c u|}\right)d |D^c u|\right)}.
$$
Lebesgue's dominated convergence Theorem provides \eqref{upperboundLp} 
for every $(u,v) \in BV(\Omega;\mathbb R^d)\times L^p(\Omega;\mathbb R^m).$
\end{proof}

\begin{proof}[Proof of Theorem \ref{MainResultp}] 
The proof follows by Theorems \ref{lowerboundL_pthm} and \ref{upperboundL_pthm}.
\end{proof}

\begin{remark}
\label{other proof ub}
It is worth to observe that in the upper bound for the jump term, the proof of Case 1, Steps 1 d), 2, 3, and 4, could be replaced by arguments more similar to the ones in \cite[Proposition 4.1]{FKP1}, i.e adopt a covering $\{Q_i\}$ of the type \cite[(4.5)]{FKP1}, placing together with the sequences $\{\xi_n^{(i)}\}$ in \cite[(4.6)]{FKP1}, sequences $\{v_n^{(i)}\}$, coinciding with $v(x)$ in a layer (depending on $n$ and $(i)$) of the sets $\{x\in Q_i:(x-a_i)\cdot e_N =-\tfrac{\eta}{2}\}$ and $\{x\in Q_i:(x-a_i)\cdot e_N =\tfrac{\eta}{2}\}$, and then exploiting diagonal arguments and the reasonings in \cite{FM2}. 
\end{remark}
\section{Main Results: $BV\times L^\infty$}\label{MainLinfty}
This section is devoted to the proof of Theorem \ref{MainResultinfty} and it is divided in two subsections. The first for the lower bound and the second for the upper bound. 

\subsection{Lower semicontinuity in $BV \times L^\infty$}\label{lbbvlinfty}

\begin{theorem}\label{lowerboundL_inftythm}
Let $f:\Omega
\times\mathbb{R}^d\times\mathbb{R}^{m}\times\mathbb R ^{d\times
N}\rightarrow[ 0,+\infty) $ be a continuous function satisfying $(H_0)$, $%
( H_{1}) _{\infty}-( H_{3}) _{\infty}$. Then 
\begin{align*}
\underset{n\rightarrow+\infty}{\lim\inf}\int_{\Omega}f( x,u_{n},v_{n},\nabla u_{n}) dx
&\geq\int_{\Omega}f( x,u,v,\nabla u) dx  
+\int_{J_{u}\cap \Omega}K_\infty( x,0,u^{+},u^{-}
,\nu_{u}) d\mathcal{H}^{N-1}   \\
&+\int_{\Omega}f^{\infty}(x,u,0,\tfrac{dD^{c}u}{
d\vert D^{c}u\vert }) d\vert 
D^{c}u\vert. 
\end{align*}
in $BV(\Omega;\mathbb{R}^{d}) \times L^{\infty}(\Omega ;\mathbb{
R}^{m}) $ with respect to the ($L^1-$strong$~\times~L^{p}-$%
weak)-convergence,
where $K_\infty$ is defined in \eqref{Kinfty} and $f^\infty$ is the $(\infty,1)-$ recession function defined in \eqref{recession}.
\end{theorem}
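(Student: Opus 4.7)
The plan is to mirror the structure of Theorem \ref{lowerboundL_pthm}, adjusting every estimate to the $L^\infty$ setting by working with the $M$-dependent constants provided by $(H_1)_\infty$, $(H_2)_\infty$ and $(H_3)_\infty$, where $M$ is a uniform upper bound for $\|v_n\|_{L^\infty}$ (which exists, up to a subsequence, since $v_n\overset{\ast}{\rightharpoonup}v$ in $L^\infty$). As in the $L^p$ case, one first reduces to $u_n\in C^\infty_0$, $v_n\in C_0^\infty$ using \cite{AF, FM1}, assumes the $\liminf$ is a finite limit, and extracts a subsequence so that $\mu_n:=f(\cdot,u_n,v_n,\nabla u_n)\mathcal L^N\overset{\ast}{\rightharpoonup}\mu$ for some nonnegative Radon measure $\mu$. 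One then decomposes $\mu=\mu_a\mathcal L^N+\mu_j\mathcal H^{N-1}\lfloor J_u+\mu_c|D^c u|+\mu_s$ and reduces the theorem to proving the three blow-up inequalities $\mu_a(x_0)\ge f(x_0,u(x_0),v(x_0),\nabla u(x_0))$ $\mathcal L^N$-a.e., $\mu_j(x_0)\ge K_\infty(x_0,0,u^+(x_0),u^-(x_0),\nu_u(x_0))$ $\mathcal H^{N-1}$-a.e.\ on $J_u$, and $\mu_c(x_0)\ge f^\infty(x_0,u(x_0),0,\tfrac{dD^c u}{d|D^c u|}(x_0))$ $|D^c u|$-a.e.

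The bulk inequality follows along the lines of \cite{RZ, RZerr} and is straightforward, the only new ingredient being the use of $(H_1)_\infty$ and $(H_2)_\infty$ together with the weak-$\ast$ continuity of the linear contribution in $v$. For the jump part one repeats the blow-up argument in Theorem \ref{lowerboundL_pthm}, rescaling by $y=(x-x_0)/\varepsilon_k$ and setting $u_{n,k}(y):=u_n(x_0+\varepsilon_k y)$ and $v_{n,k}(y):=v_n(x_0+\varepsilon_k y)$ (no weight on $v_{n,k}$ is needed now, since there is no $p$-growth to compensate). Applying $(H_3)_\infty$ and the modulus of continuity in $(H_2)_\infty$ one replaces $\varepsilon_k f(x_0+\varepsilon_k y,\cdot,\cdot,\tfrac{1}{\varepsilon_k}\nabla u_{n,k})$ by $f^\infty(x_0,u_{n,k},v_{n,k},\nabla u_{n,k})$ up to an infinitesimal remainder, and a diagonal extraction produces $\bar u_k\to u_0$ in $L^1$ with $u_0$ as in \eqref{u0}, and $\bar v_k$ uniformly bounded in $L^\infty$ with $\bar v_k\overset\ast\rightharpoonup 0$. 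Remark \ref{3.17FMrrem} ii) then yields sequences $\xi_k\in\mathcal A(u^+(x_0),u^-(x_0),\nu_u(x_0))$ and $\bar\eta_k\in L^\infty(Q;\mathbb R^m)$ with $\int_Q\bar\eta_k\,dy=0$ and $\bar\eta_k$ uniformly $L^\infty$-bounded such that
$$\liminf_{k\to+\infty}\int_Q f^\infty(x_0,\bar u_k,\bar v_k,\nabla\bar u_k)\,dy\ge\limsup_{k\to+\infty}\int_Q f^\infty(x_0,\xi_k,\bar\eta_k,\nabla\xi_k)\,dy\ge K_\infty(x_0,0,u^+,u^-,\nu_u),$$
the last inequality being the definition of $K_\infty$. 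A key simplification, compared with the $L^p$ case, is Proposition \ref{Kindependentofv}: since $K_\infty$ does not depend on $b$, one does not need to keep track of the average of $\bar v_k$ when passing to the sequences of $\mathcal A$.

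The Cantor part is treated exactly as Steps 1--4 of the Cantor argument of Theorem \ref{lowerboundL_pthm}, with three systematic changes. First, the growth estimates use $(H_1)_\infty$ with $C_M$ in place of $C$; the $L^\infty$ bound on $v_n$ replaces the $L^p$ bound and, because $|v_n|\le M$ a.e., the cutoff/truncation construction $\tau_{L_k}$ of Step 2 becomes unnecessary ($\{v_n\}$ is already equi-integrable in any $L^q$, $1\le q<\infty$). Second, the recession comparison uses $(H_3)_\infty$, so the error term in the estimate corresponding to the $L^p$ one reads $c'_M\varepsilon_k^\tau(\int_Q|\nabla\bar u_k|\,dy)^{1-\tau}+O(\varepsilon)$, which still vanishes as $\varepsilon_k\to 0^+$. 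Third, the Lipschitz bound used at the very end of Step 4 is the $(\infty,1)$-Lipschitz estimate \eqref{infty-lipschitzcontinuity} in place of \eqref{p-lipschitzcontinuity}; the freezing of $(x,u)$ via the Yosida-type argument, together with the rank-one structure provided by Alberti's theorem and $f^\infty(x_0,u(x_0),0,\cdot)$ being convex on rank-one matrices, yields $\mu_c(x_0)\ge f^\infty(x_0,u(x_0),0,A(x_0))$.

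The main technical obstacle is the jump inequality: one must simultaneously construct admissible $W^{1,1}$-competitors $\xi_k$ attaining the correct boundary values $u^\pm(x_0)$ on $\partial Q_\nu$ and companion $L^\infty$-fields $\bar\eta_k$ of zero average which do not increase the energy, while keeping track of the uniform $L^\infty$ bound on the $v_n$'s through the rescaling. This is precisely the role of Remark \ref{3.17FMrrem} ii), which produces the joint modification, and of Proposition \ref{Kindependentofv}, which removes any constraint on the prescribed average of $\bar\eta_k$. Once these are in place, the rest of the argument is a routine transcription of the $L^p$ proof.
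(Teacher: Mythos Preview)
Your proposal follows the same blow-up strategy as the paper and is essentially correct, but two inaccuracies should be fixed. In the jump part you assert $\bar v_k\overset\ast\rightharpoonup 0$ in $L^\infty$; this is false in general, since unlike the $L^p$ case there is no $\varepsilon_k^{1/p}$ prefactor forcing decay (take $v\equiv\mathrm{const}$). The paper only obtains $\bar v_k\overset\ast\rightharpoonup\alpha$ for some $\alpha\in L^\infty(Q)$ with $\int_Q\alpha\,dy=:y_0$, leading first to $\mu_j(x_0)\ge K_\infty(x_0,y_0,u^+,u^-,\nu)$, and only \emph{then} does Proposition \ref{Kindependentofv} replace $y_0$ by $0$. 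Since you invoke that proposition anyway, your conclusion survives, but the intermediate claim is wrong as stated.

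In the Cantor part, the Yosida transform is not used in the lower bound (it appears only in the upper bound); the freezing of $(x,u)$ is done directly via the modulus of continuity in $(H_2)_\infty$, exactly as in Step~2 of Theorem \ref{lowerboundL_pthm}. More importantly, at the end of Step~4 the convexity-quasiconvexity inequality delivers $f^\infty(x_0,u(x_0),y_0(\tau),A)$ with a nonzero second argument, and the passage to $f^\infty(x_0,u(x_0),0,A)$ requires the fact (\cite[Lemma 2.2]{FKP1}) that $f^\infty(x,u,b,\xi)$ is independent of $b$ whenever $\mathrm{rank}\,\xi\le 1$, which applies because $A$ is rank one by Alberti's theorem. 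Your reference to rank-one convexity gestures at this but does not quite supply the needed independence statement.
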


\begin{proof} Up to a subsequence, denote by $\mu$ the weak $\ast$ limit of the measures $\mu_{n}:=f( x,u_{n},v_{n},\nabla u_{n}) \mathcal{L}^{N}$, where $\{u_n\}$ and $\{v_n\}$ can be assumed in $C^\infty_0$. Via Besicovitch derivation theorem it is enough to prove the equivalent of \eqref{lowerboundbulk}$-$\eqref{lowerboundcantor} with $f^\infty_p$ and $K_p$ replaced by $f^\infty$ and $K_\infty$, respectively. 
To achieve this, we apply the blow up method.

\smallskip
\noindent \textbf{Bulk part.} 
The proof can be found in \cite{RZ}, with the obvious adaptations to the $BV$ case.
\smallskip

\noindent \textbf{Jump part.}
We just emphasize the main differences from the $BV\times L^p$ case.
Let $x_{0}\in J_{u}$, be as in the jump part of Theorem \ref{lowerboundL_pthm}. Then the equivalent of \eqref{mujestimate1} becomes 
\begin{equation}\label{mujestimatelinfty}
\begin{array}{ll}
\mu_{j}(x_{0}) \geq  \displaystyle{\frac{1}{\vert u^{+}(x_{0}) -u^{-}( x_{0})
\vert }\lim_{k\rightarrow +\infty }\lim_{n\rightarrow +\infty
}\int_{Q}\varepsilon _{k}f(x_{0}+\varepsilon_{k}y,u_{n,k}(y),v_{n,k}(y)
 ,\frac{1}{%
\varepsilon _{k}}\nabla u_{n,k}(y)) dy,}
\end{array}
\end{equation}
where $u_{n,k}(y) :=u_{n,k}(x_{0}+\varepsilon _{k}y)$ and $v_{n,k}(y) :=v_{n}(x_{0}+\varepsilon _{k}y).$ Moreover, \eqref{ulimit} holds, with $u_0$ defined as in \eqref{u0}.
Using the separability of $L^1(Q;\mathbb R^m)$, together with a diagonalization argument, from \eqref{mujestimatelinfty}, \eqref{ulimit}  and \eqref{doubleweaklimit}, we obtain the existence of sequences $\bar{u}_k:=u_{n(k),k}$ and $\bar{v}_k:=v_{n(k),k}$ such that $\bar{u}_k \to u_0$ in $L^1(Q;\mathbb R^d)$, $\bar{v}_k \overset{\ast}{\rightharpoonup} \alpha$ in $L^\infty(Q;\mathbb R^m)$, where $\alpha$ is  a function whose average in $Q$ is $y_0$, which in turn is the limit, up to a subsequence, of $\frac{1}{\varepsilon^N}\int_{Q(x_0,\varepsilon)}v(y)dy.$

\noindent We obtain an estimation for $\mu _{j}$ as in \eqref{mujestimate} replacing $f^\infty_p$ therein by $f^\infty$.

Using Proposition \ref{proprecessioninfty} $iii)$ we get that for any $\varepsilon
>0,$ if $k$ is sufficiently large%
\begin{align*}
\int_{Q}f^{\infty }( x_{0}+\varepsilon_{k}y,\bar{u}_{k},\bar{v}_{k},\nabla \bar{u}_{k})
-f^{\infty }(x_{0},\bar{u}_{k},\bar{v}_{k},\nabla \bar{u}_{k})dy \geq -\varepsilon C\int_{Q}\vert \nabla \bar{u}_{k}\vert dy\geq O(\varepsilon) .
\end{align*}
On the other hand, by $(H_{3})_\infty$ and H\"{o}lder inequality 
we conclude
\begin{align*}
& \int_{Q}\varepsilon _{k}(f( x_{0}+\varepsilon
_{k}y,\bar{u}_{k} ,\bar{v}_{k},\frac{1}{\varepsilon _{k}}\nabla \bar{u}_{k})
-f^{\infty }(x_{0}+\varepsilon _{k}y,\bar{u}_{k}
,\bar{v}_{k},\nabla \bar{u}_{k})) dy \\
 &\leq C_M\int_{\{ y\in Q: \frac{\vert\nabla
\bar{u}_{k}\vert}{\varepsilon _{k}} \geq L\} }(\vert
\nabla \bar{u}_{k}\vert ^{1-\tau }\varepsilon _{k}^{\tau
})dy + C_M\int_{\{ y\in Q:\frac{|\nabla\bar{u}_{k}|}{\e_k} < L \} }\vert \nabla \bar{u}_k\vert dy \\
 &\leq O(\varepsilon) +C_M\varepsilon _{k}^{\tau
}\left(\int_{Q}\vert \nabla \bar{u}_{k}\vert
dy\right) ^{1-\tau }= O(\varepsilon),
\end{align*}%
where the constants $C_M$ vary from line to line but are all related to the $L^\infty$ uniform bound on $\{\overline{v}_k\}$, $M$.
Arguing as in \cite{FM2} we are led to the existence of two, not relabeled, subsequences $\{\overline{u}_k\}$ and $\{\overline{v}_k\}$ converging strongly to $u_0$ in $L^1$ and weakly $\ast$ in $L^\infty$ to $\alpha$, respectively and such that \eqref{mujestimate2} holds with $f^\infty_p$ replaced by $f^\infty$.

Next we apply Remark \ref{3.17FMrrem} $ii)$  to $f^\infty(x_0, \cdot, \cdot, \cdot)$, obtaining \eqref{mujestimate3} with the obvious adaptations, where $\xi_k \to u_0$ in $L^1(Q;\mathbb R^d)$ and $\xi_k \in {\cal A}(u^+(x_0), u^-(x_0),\nu_u(x_0))$, $\zeta_k \in L^\infty(Q;\mathbb R^m)$, converging weakly $\ast$ to $ \alpha$ in $L^\infty(Q;\mathbb R^m)$ with $\int_Q \zeta_k dy =y_0$. In particular, by \eqref{Kinfty} we have 
\begin{equation*}
\mu _{j}(x_{0})\geq K_\infty(x_0,y_0,u^{+}(x_{0}),u^{-}(x_{0}),\nu _{u}(x_{0}))\hbox{
for }\mathcal{H}^{N-1}-\hbox{a.e. }x_{0}\in J_{u}\cap \Omega,
\end{equation*} and this, together with Proposition \ref{Kindependentofv} concludes the proof of the lower bound inequality for the jump part.

\medskip
\noindent\textbf{Cantor part.} We divide the proof into several steps, just emphasizing the main differences with the $BV\times L^p$ case. Recall that $\eqref{eq1Cantorlower} - \eqref{eq5Cantorlower}$ hold.

\noindent {\bf Step 1. } It suffices to observe that \eqref{limitv=0} holds for every $1< p<\infty$, in particular there exist two sequences $\{\bar{u}_k\}$ and $\{{\bar v}_k\}$ such that  Step 1 a) in the $BV \times L^p$ case holds. 

\noindent{\bf Step 2.} With easier estimates than those of the $BV \times L^p$ case, we obtain \eqref{eqwithtildeu}, where the sequences $\{\tilde{v}_k\} $ and $\{\tilde{u}_k\}$ are obtained through a diagonalization argument from $\{\bar{v}_k\}$ and $\{\tilde{u}_k^{r,s}\}$, where this latter sequence is defined by
$$
\tilde{u}_k^{r,s}:=a_k+\varphi_{r,s}\left(|\bar{u}_k-a_k)| \right)(\bar{u}_k-a_k),
$$
with $a_k:= \frac{1}{|Q_k|}\int_{x_0+\varepsilon Q}\bar{u}_k\,dx$, and
$\varphi_{r,s}$ is as in \eqref{varphirs}.

\noindent {\bf Step 3.}  Notice that, if we consider $\{\bar{w}_k\}$ and $\{\theta_k\}$ as in \eqref{wkhat} and \eqref{thetak}, respectively, then condition (\ref{eqwithtildeu}) can be written as
\begin{equation}\label{eqwithmukinfty}
\left(1+\omega_{M,K}\left(\tfrac{1}{n}\right)\right)\,\mu^c(x_0)\ge\limsup_{k\rightarrow +\infty}\frac{1}{\theta_k}\int_{\gamma Q}f(x_0,u(x_0),\bar{v}_k(x_0+\varepsilon_k z),\theta_k\nabla\bar{w}_k(z))\,dz.\end{equation}
Then, modifying $\{\bar{v}_k(x_0+\varepsilon_k\cdot)\}$ and $\{\bar{w}_k\}$ near the boundary of $\gamma Q$ new sequences $\{\tilde{v}_k\}$ and $\{\tilde{w}_k\}$ are obtained in order to apply the convexity-quasiconvexity of $f$. We consider an inner cube $\tau Q$, $\tau\in (t,\gamma)$, and we modify the sequences in a layer $\tau Q\setminus\tau(1-\delta)Q$. Indeed we construct  $\tilde{v}_k\overset{\ast}{\rightharpoonup} \alpha$ in $L^\infty(\tau Q;\mathbb R^m)$, $\int_{\tau Q}\tilde{v}_kdx=\int_{\tau Q}\alpha dx=:y_0(\tau)$, and $\tilde{w}_k(x)=\xi_0(x)+\varphi(x)$ for some $\varphi\in W^{1,\infty}_{\rm {per}}(\tau Q;\mathbb{R}^d)$ and such that 
\begin{equation}
\label{estimateLAMBDA}
\left(1+\omega_{M,K}\left(\tfrac{1}{n}\right)\right)\mu^c(x_0)\ge \lim_{k\rightarrow +\infty}\frac{1}{\theta_k}\int_{\tau Q} f(x_0,u(x_0),\tilde{v}_k(z),\theta_k \nabla\tilde{w}_k(z))\,dz +\Lambda(1-t),
\end{equation}
for some continuous function $\Lambda:[0,1]\rightarrow \mathbb{R}$ with $\Lambda(0)=0.$
Observe that, as in Step 3 (lower bound) of the $BV\times L^p $ case, $\eqref{limitxiandhatu} - \eqref{estimategradientxi}$ hold.  More precisely, we can apply the slicing method as in the proof of Step 3 (lower bound) for the case $BV \times L^p$, observing that \eqref{wbark} holds, with obvious adaptations.
For the reader's convenience, we observe that the construction of the fields $v_k$ is different from the $L^p$ case, here it is identical to the proof of Lemma \ref{3.1FMr}. We briefly recall that, for every $j \in \mathbb N$, we can divide $\tau Q\setminus \tau(1-\delta)Q$ into $j$ layers thus we getting a sequence $\{k(j)\}$, layers $S_j:=\{z\in \tau Q\setminus \tau(1-\delta)Q:\ \alpha_j<{\rm dist}(z,\partial (\tau Q))<\beta_j\}$ and cut-off functions $\eta_j$ on $\tau Q$ such that estimates analogous to \eqref{boundvj} hold with obvious adaptations. 
Then, define $\tilde{w}_j$ as in \eqref{wtildej}, and 
$$
\tilde{v}_j(z):=(1-\eta_j(z))\tfrac{\displaystyle{\frac{1}{|\tau Q|}\int_{\tau Q}\alpha(x)-\eta_j(x)\bar{v}_{k(j)}(x_0+\varepsilon_{k(j)}x)\,dx}}{\displaystyle{\frac{1}{|\tau Q|}\int_{\tau Q}(1-\eta_j(x))\,dx}}+\eta_j(z)\bar{v}_{k(j)}(x_0+\varepsilon_{k(j)}z).
$$
Remark that $
\Vert\tilde{v}_j\Vert_{L^\infty}\le \Vert\alpha\Vert_{L^\infty}+1+\Vert\bar{v}_{k(j)}\Vert_{L^\infty}\le 2M+1,\, \, \, \frac{1}{|\tau Q|}\int_{\tau Q}\tilde{v}_j(z)\,dz=\frac{1}{|\tau Q|}\int_{\tau Q}\alpha(z)\,dz:=y_0(\tau),\ \text{for all}\,  j.$
By (\ref{eqwithmukinfty}), summing and subtracting $f(x_0,u(x_0),\tilde{v}_j(z),\theta_{k(j)}\nabla\tilde{w}_j(z))$  inside the integral, having in mind the definition of $\eta_j$ and using  $(H_1)_\infty$, we get
\begin{align*}
&(1+\omega(\tfrac{1}{n}))\,\mu^c(x_0)\ge\limsup_{j\rightarrow +\infty}\frac{1}{\theta_{k(j)}}\int_{\tau Q}f(x_0,u(x_0),\bar{v}_{k(j)}(x_0+\varepsilon_{k(j)} z),\theta_{k(j)}\nabla\bar{w}_{k(j)})\,dz\\ \notag
&\ge \limsup_{j\rightarrow +\infty}\frac{1}{\theta_{k(j)}}\left\{\int_{\tau Q}f(x_0,u(x_0),\tilde{v}_j,\theta_{k(j)}\nabla\tilde{w}_j)\,dz-\int_{x\in \tau Q:{\rm dist}(x,\partial(\tau Q))\le\beta_j}f(x_0,u(x_0),\tilde{v}_j,\theta_{k(j)}\nabla\tilde{w}_j)\,dz \right\}\\ \notag
&\ge \limsup_{j\rightarrow +\infty}\frac{1}{\theta_{k(j)}}\left\{\int_{\tau Q}f(x_0,u(x_0),\tilde{v}_j,\theta_{k(j)}\nabla\tilde{w}_j)\,dz-\int_{S_j}c(|\nabla\bar{w}_{k(j)}|+ |\nabla \eta_j|\,|\bar{w}_{k(j)}-\xi_{k(j)}|)\,dz\right.\\  \notag
&\,\, \, \,\,\left.-\int_{\tau Q\setminus\tau(1-\delta)Q}c(1+|\nabla \xi_{k(j)}|)\,dz \right\}\\  \notag
&\ge \limsup_{j\rightarrow +\infty}\frac{1}{\theta_{k(j)}}\left\{\int_{\tau Q}f(x_0,u(x_0),\tilde{v}_j,\theta_{k(j)}\nabla\tilde{w}_j)\,dz-\frac{c}{j}-\int_{\tau Q\setminus\tau(1-\delta)Q}c(1+|\nabla \xi_{k(j)}|)\,dz\right\}.
\end{align*}

\noindent By (\ref{estimategradientxi}) and (\ref{eq5Cantorlower}), $\int_{\tau Q\setminus\tau(1-\delta)Q}c(1+|\nabla \xi_{k(j)}|)\,dz\le \Lambda(1-t)$ for some continuous $\Lambda:[0,1]\rightarrow \mathbb{R}$ with $\Lambda(0)=0$. Therefore we have \eqref{estimateLAMBDA}, up to a relabelling of the sequences.

\noindent{\bf Step 4.} Analogously to Step 4 for the $BV\times L^p$ case, the functions $\tilde{v}_k$ have constant average in $\tau Q$, given by $y_0(\tau)$ and the functions $\xi_k$ satisfies the same periodicity properties. 

This, together with the fact that $\tilde{w}_j=\xi_{k(j)}$ on $\partial (\tau Q)$, yields that $\tilde{w}_j\in (\frac{\zeta_{k(j)}(\frac{\tau}{2})-\zeta_{k(j)}(-\frac{\tau}{2})}{\tau}\otimes e_N)\,x+W^{1,\infty}_{\rm per}(\tau Q;\mathbb{R}^d)$. Therefore
$$(1+\omega(\tfrac{1}{n}))\,\mu^c(x_0)\ge O(1-t)+\displaystyle{\limsup_{j\rightarrow +\infty}\tfrac{|\tau Q|}{\theta_{k(j)}}f(x_0,u(x_0),y_0(\tau),\tfrac{\zeta_k(\frac{\tau}{2})-\zeta_k(-\frac{\tau}{2})}{\tau}\otimes e_N)}.$$
If we add and subtract in the previous limit the quantity $\frac{|\tau Q|}{\theta_{k(j)}}f(x_0,u(x_0),y_0(\tau),\frac{\theta_{k(j)}}{|\tau Q|}A)$ we get two terms. One will raise the expected value of the $f^\infty$ function, namely
$$\displaystyle{\lim_{k \to +\infty}\tfrac{|\tau Q|}{\theta_{k(j)}}f(x_0,u(x_0),y_0(\tau),\tfrac{\theta_{k(j)}}{|\tau Q|}A)=
f^\infty(x_0,u(x_0),y_0(\tau),A)=f^\infty(x_0,u(x_0),0,A)},$$
the last identity following from Lemma 2.2 in \cite{FKP1} and recalling that $A$ is a rank-one matrix. The other term can be estimated using the Lipschitz continuity of $f(x_0, u(x_0),0, \cdot)$, i.e. \eqref{infty-lipschitzcontinuity} and (\ref{characthatu}).

Then, passing to the limit on $k$, and using (\ref{limitxiandhatu}), (\ref{eq5Cantorlower}) and (\ref{limitA}), we get
$$(1+\omega(\tfrac{1}{n}))\,\mu^c(x_0)\ge O(1-t)+f^\infty(x_0,u(x_0),0,A)+O(1-t).$$
Finally the desired estimate is obtained letting $\varepsilon\rightarrow 0^+$ and $t\rightarrow 1^-$.\end{proof}

\subsection{Upper bound in $BV\times L^\infty$}\label{ubbvlinfty} 
Let
$${\cal J}_\infty(u,v;A):=\inf\left\{\liminf_{n\to +\infty} J(u_n,v_n;A):\ u_n\in BV(\Omega;\mathbb{R}^d),\ v_n\in L^\infty(\Omega;\mathbb{R}^m),\ u_n\to u \hbox{ in } L^1, v_n \overset{\ast}{\rightharpoonup} v  \hbox{ in } L^\infty\right\}$$
for open sets $A\subset\Omega$ and for any $(u,v)\in BV(\Omega;\mathbb{R}^d)\times L^\infty(\Omega;\mathbb{R}^m)$,
where, $J(u,v;A)$ is as in \eqref{Jext} with $L^p$ replaced by $L^\infty.$ We observe that  $(H_1)_\infty$ implies that for every $u \in BV(\Omega;\mathbb{R}^d)$ and for every $v \in L^\infty(\Omega;\mathbb{R}^m)$,  with $\|v\|_{L^\infty}\leq M$ there exists $C_M>0$ such that $
{\cal J}_\infty(u,v;A)\le C_M\left(|A|+|Du|(A)\right).$ Moreover, ${\cal J}_\infty$ is a variational functional.

\begin{theorem}\label{upperboundL_inftythm}
Let $f:\Omega
\times\mathbb R^d\times\mathbb{R}^{m}\times\mathbb R ^{d\times
N}\rightarrow[ 0,+\infty) $ be a continuous function satisfying $(H_0)$, $( H_{1}) _\infty-( H_{3}) _\infty$, and ${\overline J}_\infty$ be defined in \eqref{relaxedp}. Then for every $(u,v)\in BV(\Omega;\mathbb R^d) \times L^{\infty}(\Omega ;\mathbb{%
R}^{m})$:
\begin{align}
\label{upperboundLinfty}
{\overline J}_\infty( u,v;\Omega)\leq\int_{\Omega}f(x,u,v,\nabla u) dx+\int_{J_{u}\cap\Omega}K_\infty( x,0,u^{+},u^{-},\nu_{u}) d\mathcal{H}^{N-1}+\int_{\Omega}f^{\infty}(x,u,0,\tfrac{dD^{c}u}{d\vert D^{c}u\vert }) d\vert
D^{c}u\vert.
\end{align}
 
\end{theorem}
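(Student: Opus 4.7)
The plan is to adapt the three-part proof of Theorem~\ref{upperboundL_pthm} to the $L^\infty$ setting. Because the target space is $L^\infty$, the final truncation step (Part 3) of the $L^p$ proof is unnecessary, so I would organize the argument in two parts: first establish \eqref{upperboundLinfty} for $(u,v) \in (BV(\Omega;\mathbb{R}^d)\cap L^\infty(\Omega;\mathbb{R}^d))\times L^\infty(\Omega;\mathbb{R}^m)$, then extend to general $BV\times L^\infty$ via the $\phi_i$-truncation of $u$.

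Since $\mathcal{J}_\infty(u,v;\cdot)$ is a variational functional absolutely continuous with respect to $\mathcal{L}^N+|Du|$, it suffices to bound its three Radon-Nikod\'{y}m densities separately. For the bulk density I would invoke the argument of \cite[Theorems~12 and 14]{RZ}, which transfers to BV. For the Cantor density $\frac{d\mathcal{J}_\infty(u,v;\cdot)}{d|D^cu|}(x_0)\le f^\infty(x_0,u(x_0),0,\frac{dD^cu}{d|D^cu|}(x_0))$, I would mollify $u$, apply the Yosida transform (Definition~\ref{defYosidap}, Proposition~\ref{propYosida}) to freeze the dependence in $(x,u)$, and then appeal to Proposition~\ref{proprecessioninfty} together with the $(\infty,1)$-Lipschitz condition \eqref{infty-lipschitzcontinuity}. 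The $L^\infty$ bound on $v$ is preserved because $v$ itself is never perturbed in this argument; the only change from the $L^p$ analysis is that the auxiliary sup-homogenization function $h$ is constructed relative to the $(\infty,1)$-recession and the estimates involving $\int_{B(x_0,\varepsilon)}|v|\,dx$ replace those for $|v|^p$.

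The main obstacle will be the jump part, i.e.\ showing $\mathcal{J}_\infty(u,v;J_u \cap \Omega) \leq \int_{J_u\cap \Omega}K_\infty(x,0,u^+,u^-,\nu_u)\,d\mathcal{H}^{N-1}$. I would follow the four-case scheme of the $L^p$ jump analysis (planar interface, Whitney/polyhedral cube covering, piecewise-constant $BV$ via Proposition~\ref{Baldo}, and general $BV\cap L^\infty$), replacing $L^p$-weak convergence by weak-$\ast$ $L^\infty$-convergence throughout. The critical new ingredient is Proposition~\ref{PropKr}: working with $K_r$ in place of $K_\infty$ guarantees that the competing fields $\eta$ satisfy the pointwise bound $\|\eta\|_{L^\infty}\le |b|+r$, which in turn provides the uniform $L^\infty$ bound needed for the recovery sequences. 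In the planar step the construction of $\xi_{2k+1}$ from \eqref{xi2k+1} is reused, while the sequence $v_{2k+1}$ is defined by inserting $\eta((2k+1)x)$ on the slab, and weak-$\ast$ convergence is obtained via the periodicity of $\eta$. The slicing and cut-off arguments replace Lemma~\ref{3.1FMr} by Remark~\ref{3.17FMrrem}(ii), which preserves the $L^\infty$ bound. The transition from level $r$ to $K_\infty$ uses \eqref{KKr}, the upper semicontinuity in Proposition~\ref{Lemma2.15FMinfty}, and the lower bound inequality Theorem~\ref{lowerboundL_inftythm} to identify the limit. The cube-covering and polyhedral steps transfer essentially verbatim, with the uniform continuity estimate \eqref{ucKr} playing the role of property (e) in Proposition~\ref{Lemma2.15FM}.

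Finally, for general $u\in BV(\Omega;\mathbb{R}^d)$ and $v\in L^\infty$, I would appeal to the lower semicontinuity of $\mathcal{J}_\infty$ and apply the truncation $\phi_i\in W_0^{1,\infty}(\mathbb{R}^d;\mathbb{R}^d)$ of Part~2 in the proof of Theorem~\ref{upperboundL_pthm}. Passing to the limit as $i\to+\infty$ uses $(H_1)_\infty$ with constant $C_M$ determined by $\|v\|_{L^\infty}$, the growth and upper semicontinuity of $f^\infty$ and $K_\infty$ provided by Propositions~\ref{proprecessioninfty} and \ref{Lemma2.15FMinfty}, and Theorem~\ref{lowerboundL_inftythm} to pin down the limit, concluding \eqref{upperboundLinfty}.
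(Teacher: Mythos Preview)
Your proposal is correct and follows essentially the same approach as the paper's proof: the two-part structure (omitting the $L^p$ truncation Part~3), the Yosida-transform treatment of the Cantor density, the use of $K_r$ from Proposition~\ref{PropKr} together with \eqref{ucKr} and Remark~\ref{3.17FMrrem}(ii) in the jump analysis, and the $\phi_i$-truncation for general $u\in BV$ all match the paper's argument. The only cosmetic difference is that for the Cantor part the paper defers directly to \cite[(6.6)]{FKP1} rather than spelling out the $(\infty,1)$-homogenized function $h$ you describe.
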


\begin{proof}
The representation \eqref{upperboundLinfty} is achieved first for $u\in BV(\Omega;\mathbb R^d)\cap L^\infty(\Omega;\mathbb R^d)$, then, via an approximation argument as in \cite{AMT}, the result will be obtained in the whole space.

\smallskip 
\noindent {\bf Part 1}. Let $u\in BV(\Omega;\mathbb R^d)\cap L^\infty(\Omega;\mathbb R^d)$. As in Part 1 of Theorem \ref{upperboundL_pthm} it suffices to prove the equivalent of \eqref{absolutely-continuousUB}$-$\eqref{jump-densityUB} with $K_p$ and $f_p^\infty$ replaced by $K_\infty$ and $f^\infty$, respectively.

\noindent {\bf Bulk part.} It follows from \cite[Theorem 12]{RZ}.

\smallskip
\noindent{\bf Cantor part.}
We consider $u \in BV(\Omega;\mathbb R^d) \cap L^\infty(\Omega;\mathbb R^d)$ and $v \in L^\infty(\Omega;\mathbb R^m) $. Again we follow \cite{FM2} and \cite{FKP1}. As usual we identify $u$ with its approximate limit defined in $\Omega\setminus J_u$. Considering $u_n:=u\ast \varrho_n$, where $\{\varrho_n\}$ is a sequence of mollifiers, one has \eqref{untou}.
Recalling that $u$ is $|D^cu|-$measurable, $|Du|=|D^cu|+\eta$, where $\eta$ and $|D^cu|$ are mutually singular Radon measures, we consider  $x_0\in\Omega$ such that $\frac{d {\cal J}_\infty(u,v;\cdot)}{d |D^cu|}(x_0)$  exists and is finite, $\eqref{FMr5.13} - \eqref{FMP6.17new}$ hold, for every $1< p<\infty$, 
\begin{equation}\nonumber
\lim_{\e\to 0}\frac{1}{|D^cu|(B(x_0,\e))}\int_{B(x_0,\e)}f^\infty(x_0,u(x_0),0,A(x))d|D^cu|=f^\infty(x_0,u(x_0),0,A(x_0)),\end{equation}
and \eqref{FMr5.16} hold.
Fixing $\delta>0$ and arguing as in the Cantor part of Theorem \ref{upperboundL_pthm} we obtain
\begin{equation*}
{\cal J}_\infty(u,v; B(x_0,\varepsilon)) \leq 
\liminf_{\varepsilon \to 0^+}\liminf_{n\to +\infty}\frac{1}{|D^c u|(B(x_0,\varepsilon))}\int_{B(x_0,\varepsilon)} f(x_0, u(x_0),v,Du \ast \varrho_n )dx+ O(\delta).
\end{equation*}
Then the Cantor upper bound inequality is achieved as in the proof of \cite[(6.6)]{FKP1}.

\smallskip
\noindent{\bf Jump part.}
We claim that
\begin{equation*}
\displaystyle{{\cal J}_\infty(u,v; J_u\cap \Omega)\leq \int_{J_u\cap\Omega} K_\infty(x, 0, u^-, u^+, \nu_u)d {\cal H}^{N-1},}
\end{equation*}
for every $(u,v) \in BV(\Omega;\mathbb R^d)\cap L^\infty(\Omega;\mathbb R^d)\times L^\infty(\Omega;\mathbb R^m)$.  
\noindent The proof develops exploiting the arguments in \cite[Step 3 of Section 5]{FM2}, being divided into Cases $1$, $2$ and $3$ as in the $BV\times L^p$ case. Here we just present Case 1, since the others are entirely similar to the ones in Theorem \ref{upperboundL_pthm}. 

\noindent{\bf Case 1.} We start to consider $u:=c \chi_E + d (1-\chi_E)$, with ${\rm Per}(E;\Omega)<+\infty$, and $v \in L^\infty(\Omega;\mathbb R^m)$ and we aim to prove that
\begin{equation}
\label{5.20FM2infty}
\displaystyle{{\cal J}_\infty(u,v;A)\leq \int_{A}f(x,u,v,0)dx +\int_{J_u\cap A}K_\infty(x,0,c,d,\nu)d {\cal H}^{N-1}, \text{ for every } A \in \mathcal A(\Omega)}.
\end{equation}
This inequality is achieved in several steps, and we present just the main differences with the case $BV\times L^p$. 

\noindent{\bf Step 1.} First we assume that $u$ has a planar interface and we keep the same notations as in Step 1 of \eqref{jump-densityUB}. 
Suppose that $f$ does not depend on $x$, and we claim that there exist $\{u_n\}$ as in the proof of \eqref{jump-densityUB} 
and a sequence $\{v_n\} \subset L^\infty(a_0+\lambda Q_\nu;\mathbb R^m)$, such that $v_n(x)= v(x)$ if $|(x-a_0)\cdot \nu|> \frac{\lambda}{2(2n+1)}$, with 
$u_n \to u $ in $L^1(a_0+\lambda Q_\nu;\mathbb R^d)$, $v_n \overset{\ast}{\rightharpoonup} v$ in $L^\infty(a_0+\lambda Q_\nu;\mathbb R^m)$
and
\begin{equation}
\label{eqjump1infty}
\displaystyle{\lim_{n \to +\infty}\int_{a_0+ \lambda Q_\nu}f( u_n,v_n, \nabla u_n)dx = \int_{a_0+ \lambda Q_\nu} f( u,v,0)dx + \lambda^{N-1} K_\infty(0,c,d,\nu).}
\end{equation}

\noindent {\bf Step 1 a).} As in the proof of Theorem \ref{upperboundL_pthm} we claim that for all $\xi \in {\cal A}(c,d,e_N)$ and for all $\varphi \in L^\infty(Q;\mathbb R^m)$, with $\int_Q\varphi dx =0$, there exists $\xi_n \in {\cal A}(c,d,e_N)$ and 
$v_n\in L^\infty(Q;\mathbb R^m)$ such that $v_n(x)=v(x)$ if $|x_N| >\frac{1}{2(2n+1)}$ 
and \eqref{xinvn} hold replacing $\rightharpoonup$ in $L^p$ by $\overset{\ast}{\rightharpoonup}$ in $L^\infty$ and \eqref{eq2jumptoprove} hold with $f^\infty_p$ replaced by $f^\infty$.

Let $\Sigma$ be as in the proof of \eqref{eq2jumptoprove}.
For $k \in \mathbb N$, we label the elements of $(\mathbb Z\cap [-k,k]^N)\times \{0\}$ by $\{a_i\}_{i=1}^{{(2k+1)}^{N-1}}$ and we recall that
$
(2k+1)\overline{\Sigma}=\bigcup_{i=1}^{(2k+1)^{N-1}} (a_i+\overline{\Sigma}),
$
with
$
(a_i+ \Sigma)\cap (a_j+\Sigma)= \empty \hbox{ if }i \not= j.
$
We extend $\xi(\cdot, x_N)$ to $\mathbb R^{N-1}$ by periodicity and define $\{\xi_{2k+1}\}$ as in \eqref{xi2k+1}. Clearly $\xi_{2k+1}\in {\cal A}(c,d,e_N)$ and $\|\xi_{2k+1}-u\|_{L^1(Q;\mathbb R^d)}\to 0$ as $k \to +\infty$. Extending $\varphi(\cdot, x_N)$ to $\mathbb R^{N-1}$ by periodicity define
$$
v_{2k+1}(x):=\left\{
\begin{array}{ll}
v(x) &\hbox{ if }|x_N| > \frac{1}{2(2k+1)},\\
\varphi((2k+1)x )&\hbox{ if } |x_N|\leq \frac{1}{2(2k+1)}.
\end{array}
\right.
$$

\noindent As in Step 1 a) of Theorem \ref{upperboundL_pthm} we observe that $v_{2k+1} \overset{\ast}{\rightharpoonup} v$ in $L^\infty(Q;\mathbb R^m)$.

\noindent We can  argue as in the $BV\times L^p$ case, exploiting the periodicity of $\varphi$.

\begin{align*}
&\displaystyle{\int_Q f(\xi_{2k+1}, v_{2k+1},\nabla \xi_{2k+1})dx =\int_{\Sigma}\int_{-\frac{1}{2}}^{-\frac{1}{2(2k+1)}}f(c,v(x),0)dx+\int_{\Sigma}\int_{\frac{1}{2(2k+1)}}^\frac{1}{2}f(d, v(x),0)dx}\\
&\displaystyle{ + \int_{\Sigma}\int_{|x_N|<\frac{1}{2(2k+1)}} f(\xi((2k+1)x), \varphi((2k+1)x), (2k+1)\nabla \xi((2k+1)x)dx.}
\end{align*}

\noindent The first two integrals in the right hand side, converge as $k \to +\infty$, to $\int_{Q} f(u(x),v(x),0)dx.$
The latter integral, after a change of variables becomes
\begin{align*}
&\displaystyle{\int_{\Sigma}\int_{|x_N|<\frac{1}{2(2k+1)}} f(\xi((2k+1)x), \varphi((2k+1)x), (2k+1)\nabla \xi((2k+1)x))dx}\\
&\displaystyle{= \frac{1}{2k+1}\int_Q f(\xi(y), \varphi(y), (2k+1)\nabla \xi(y))dy \underset{k\to +\infty}{\to} \int_{Q}f^{\infty}(\xi(y), \varphi(y), \nabla \xi(y))dy.}
\end{align*}
From the last two convergence we obtain the desired limit.

\noindent {\bf Step 1 b).} To prove \eqref{eqjump1infty},
let $\{(\eta_n, \varphi_n )\} \subset {\cal A}(c,d, e_N)\times L^\infty(Q;\mathbb R^m)$ with $\int_Q \varphi_n dy=0$ be a minimizing sequence for $K_\infty(c,d, 0,\e_N)$. 

By Step 1 a), for every $n \in \mathbb N$ we can find $k_n\in N$ and choose $u_{n} \in {\cal A}(c,d, e_N)$ and $v_{n} \in L^\infty(Q;\mathbb R^m)$ such that $
\|u_n- u\|_{L^1(Q;\mathbb R^d)}\leq \frac{1}{n}$, $\left|\int_Q (v_n- v)\psi_l dx\right|<\frac{1}{n}$, (for $l=1,\dots,n$ with $\{\psi_l\}$ a dense sequence of functions in $L^1(Q;\mathbb R^m)$), with $v_n$ defined as 
$$
v_{n}(x):= 
\left\{
\begin{array}{ll}
v(x)  &\hbox{ if } |x_N|> \frac{1}{2(2k_n+1)},\\
\varphi_n((2k_n+1)x) &\hbox{ if } |x_N| \leq \frac{1}{2(2k_n+1)}
\end{array}
\right.
$$
and
$$
\displaystyle{\left|\int_Q f(u_n, v_n, \nabla u_n)dx- \int_Q f(u, v,0)dx - \int_Q f^\infty(\eta_n, \varphi_n, \nabla \eta_n)dx\right|<\frac{1}{n}}.
$$
By the lower bound inequality we have that
\begin{equation}
\label{lambda1a00infty}
\begin{array}{ll}
\displaystyle{\int_{Q} f(u,v,0)dx + K_\infty(0,c,d,e_N)\leq \liminf_{n \to +\infty}\int_Q f(u_n,v_n, \nabla u_n)dx \leq\limsup_{n \to +\infty} \int_{Q}f(u_n, v_n, \nabla u_n)dx}\\
\displaystyle{ \leq \lim_{n\to +\infty} \left\{ \int_Q f(u,v,0)dx + \int_Q f^\infty (\eta_n, \varphi_n, \nabla \eta_n)dx+\frac{1}{n}\right\}=\int_Q f(u,v,0)dx + K_\infty(0,c,d,e_N),}
\end{array}
\end{equation}
which proves \eqref{eqjump1infty}, up to relabeling $\{u_n\}$ and $\{v_n\}$ with the same indices $k_n$, when $\lambda=1$ and $a_0=0$.

Considering the case of $A:=\lambda Q$, for $\lambda >0$, we define $f_\lambda, u_0$ and $v_0$ as in \eqref{u0lambda}. By \eqref{lambda1a00infty} there exists $(u_n,v_n) \in {\cal A}(c,d,e_N)\times L^\infty(Q;\mathbb R^m)$ such that $u_n \to u_0$ in $ L^1(Q;\mathbb R^d)$, $v_n \overset{\ast}{\rightharpoonup} v_0$ in $L^\infty(Q;\mathbb R^m)$ and \eqref{flambdatoKplambda} with $(K_p)_\lambda$ replaced by $(K_\infty)_\lambda$, where $(K_\infty)_\lambda$ is the function defined in \eqref{Kinfty}, with $f$ replaced by $f_\lambda$. 
Consider any $a_0 \in \mathbb R^N$ and set $\bar{u}_n$ and $\bar{v}_n$
as in \eqref{unlambda}. 

Clearly $\{\bar u_n\}$ satisfies all the properties stated in the proof of Theorem \ref{upperboundL_pthm}, Step 1 b), ${\bar v}_n \overset{\ast}{\rightharpoonup} v$ in $L^\infty(a_0+\lambda Q)$
and \eqref{fnto} holds with $(K_p)_\lambda$ replaced by $(K_\infty)_\lambda$.
Moroever, \eqref{Kplambda} holds with the obvious adaptation $(K_\infty)_\lambda=\tfrac{1}{\lambda}K_\infty$.
Hence we conclude that \eqref{4.9BFinfty} holds.

\noindent{\bf Step 1 c).} We allow $f$ to have explicit $x$-dependence and, given $r\geq 0$, we prove \eqref{5.20FM2infty} with $K_\infty$ replaced by $K_r$ as in Proposition \ref{PropKr}.

Let $A, A^\ast, Q_\nu, A'$ and $Q'$ as in Theorem \ref{upperboundL_pthm},  Step 1 c).  Since $A^\ast$ is a compactly included in $\Omega$, fixing  $\varepsilon >0$, it is possible to find a $\delta>0$ such that $(H_2)_\infty$ and \eqref{ucKr} hold uniformly in $A^\ast$, i.e.
\begin{equation}
\label{4.3BFinfty}
\displaystyle{x,y \in A^\ast, |x-y|<\delta \Rightarrow |f(x,u,b, \xi) -f(y,u,b,\xi)|\leq \varepsilon C_M(1+|\xi|),}
\end{equation}
for any $b \in \mathbb R^m$,
and
\begin{equation}
\label{4.4BFinfty}
\displaystyle{x,y \in A^\ast, |x-y|<\delta \Rightarrow |K_r(x,b,c,d, \nu)- K_r(y,b,c,d,\nu)|\leq \varepsilon C'_{|b|+r}(1+|d-c|).}
\end{equation}
Let $h\in\mathbb N$ be as in \eqref{4.5BF}, partition $\Omega'$ according to \eqref{4.6}
and denote $Q'_i:=a_i +\eta Q'$ and $Q_i:=a_i+\eta Q$.

We claim that there exists $\{u_k\} \subset W^{1,1}(A^\ast;\mathbb R^d)$ and a sequence $\{v_k\} \subset L^\infty(A^\ast;\mathbb R^m)$ such that $u_k \to u$ in $L^{1}(A;\mathbb R^d)$, $v_k \overset{\ast}{\rightharpoonup} v$ in $L^\infty(A;\mathbb R^m)$ and 
\begin{equation*}
\displaystyle{\lim_{k \to +\infty}\int_{A^\ast}f(x, u_k, v_k,\nabla u_k)dx \leq \int_{J_u \cap A^\ast}K_r(x,0,c,d,e_N)d{\cal H}^{N-1}+ \int_{A^\ast}f(x,u, v,0)dx.}
\end{equation*}
By Step 1 b), there exist sequences $\{u_k^{(1)}\}\subset {\cal A}(c,d,e_N)$, related to the cube $Q_1$ and $\{v_k^{(1)}\}\in L^\infty(Q_1;\mathbb R^m)$, such that \eqref{4.7BF} holds with $K_p$ replaced by $K_\infty$, thus, by \eqref{KKr}
$$
\displaystyle{\lim_{k\to +\infty}\int_{Q_1}f(a_1, u_k^{(1)}, v_k^{(1)},\nabla u_k^{(1)})dx \leq \eta^{N-1}K_r(a_1,0,c,d,e_N)+\int_{Q_1}f(a_1, u, v,0)dx.}
$$
By iii) in Remark \ref{3.17FMrrem} there exists $\{\xi_k^{(1)}\}\subset W^{1,1}(Q_1;\mathbb R^d)$ and a sequence $\{\overline{v}_k^{(1)}\} \subset L^\infty(Q_1;\mathbb R^m)$ such that $\xi_k^{(1)}\to u$ in $L^1(Q_1;\mathbb R^d) $, with $\xi_k^{(1)}(x)= U_k^{(1)}((x-a_1)/\eta)$ on $\partial Q_1$, ($U_k^{(1)}$ is a mollification of $u$) and $\overline{v}_k^{(1)}\overset{\ast}{\rightharpoonup} v$ in $L^\infty(Q_1;\mathbb R^m)$, and
\begin{equation*}
\begin{array}{ll}
\displaystyle{\limsup_{k\to +\infty}\int_{Q_1}f(a_1, \xi_k^{(1)}, \overline{v}_k^{(1)},\nabla \xi_k^{(1)})dx} &\displaystyle{\leq\liminf_{k \to +\infty}\int_{Q_1}f(a_1, u_k^{(1)}, v_k^{(1)},\nabla u_k^{(1)})dx}\\
&\displaystyle{\leq\eta^{N-1}K_r(a_1,0,c,d,e_N)+\int_{Q_1}f(a_1, u, v,0)dx.}
\end{array}
\end{equation*}
We can repeat the same induction argument as in Step 1 c) in Theorem \ref{upperboundL_pthm},  
obtaining $h^{N-1}$ sequences $\{\xi^{(j)}_k\}\subset {\cal A}(c,d, e_N)$ related to the cube $Q_j$, with $\xi^{(j)}_k \to u$ in $L^1(Q_j;\mathbb R^d)$, $\xi^{(j)}_k =U^{(j)}_k$ on $\partial Q_j$ and $\{\overline{v}^{(j)}_k\}\subset L^\infty(Q_j;\mathbb R^m)$, with $\overline{v}^{(j)}_k  \overset{\ast}{\rightharpoonup} v$ in $L^\infty(Q_j;\mathbb R^m)$ and
$$
\displaystyle{\eta^{N-1}K_r(a_j,0,c,d,e_N)+\int_{Q_j}f(a_j, u, v,0)dx\geq \lim_{k\to +\infty}\int_{Q_j}f(a_j, \xi_k^{(j)}, \overline{v}_k^{(j)},\nabla \xi_k^{(j)})dx}
$$
for every $j=1,\dots, h^{N-1}.$

Next for all $j=1,\dots, h^{N-1}$ we consider the subsequences $\{\zeta_k\}$, with $\zeta_k:=\xi^{(j)}_k$ and $\{\tilde{v}_k\}$, with $\tilde{v}_k:=\overline{v}^{(j)}_k$, denoted by the same index $k$ such that
\begin{equation}\label{4.9BFinfty}
\begin{array}{ll}
\displaystyle{\lim_{k\to +\infty}\int_{Q_j}f(a_j, \zeta_k, \tilde{v}_k,\nabla \zeta_k)dx \leq\eta^{N-1}K_r(a_j,0,c,d,e_N)+\int_{Q_j}f(a_j, u, v,0)dx.}
\end{array}
\end{equation}
Define the sequences $\{u_{k,\varepsilon}\}$ and $\{v_{k,\varepsilon}\}$ almost everywhere on $A^\ast$, as in \eqref{ukepsilon}, thus \eqref{ukeconv} holds.
 
Since  $\{\tilde{v}_k \}\subset L^\infty(A^\ast;\mathbb R^m)$ and coincides with $v(x)$ if $|x_N|>\eta/2$,  $v_{k,\varepsilon }\overset{\ast}{\rightharpoonup} v$ in $L^\infty(A^\ast;\mathbb R^m)$ as $k \to +\infty$ and as $\varepsilon \to 0^+$.
 Moreover, we can write, as  in \eqref{I1234}
 $$
 \displaystyle{\int_{\Omega}f(x,u_{k,\varepsilon}, v_{k,\varepsilon}, \nabla u_{k,\varepsilon}) dx=:I_1+I_2+I_3+I_4.}
$$ 
 Then it is easily seen that by \eqref{4.9BFinfty}, we have
 $$
 \displaystyle{\limsup_{k \to +\infty}I_1 \leq \sum_{i=1}^{h^{N-1}}\left( \eta^{N-1} K_r(a_i, 0,c,d, e_N)+\int_{Q_i}f(a_i,u,v,0)dx\right).}
 $$
 On the other hand, the asymptotic behaviour of $I_3+I_4$ is given by \eqref{I34}. Regarding $I_2$ we can observe that, by Remark \ref{Kre} and \eqref{4.3BFinfty},
 $$
 \begin{array}{ll}
 \displaystyle{\limsup_{k \to ì\infty}I_2 \leq \limsup_{k \to +\infty} \sum_{i=1}^{h^{N-1}}\int_{Q_i}\varepsilon C_{|b|+r}(1+ |\nabla \zeta_k|)dx = O(\varepsilon), }
 \end{array}
 $$
 since, by construction and $(H_1)_\infty$,  $\{\tilde{v}_k^{(i)}\}$ and $\{\zeta_k^{(i)}\}$ are bounded in $L^\infty(Q_i;\mathbb R^m)$ and $W^{1,1}(Q_i;\mathbb R^d)$, respectively. By Remark \ref{Kre} and \eqref{4.4BFinfty}, 
 \begin{align*}
 \displaystyle{\left| \int_{A^\ast \cap J_u} K_r(x, 0, c, d, e_N)d {\cal H}^{N-1}-\eta^{N-1}\sum_{i=1}^{h^{N-1}} K_r(a_i,0, c,d, e_N)\right|}\\
= \displaystyle{\sum_{i=1}^{h^{N-1}} \int_{Q_i}\left|K_r(x,0, c,d, e_N)-K_r(a_i,0,c,d, e_N)\right| d {\cal H}^{N-1}=O(\varepsilon).}
 \end{align*} 
Finally, putting together this estimate, the limits of $I_2, I_3, I_4$ and estimating $\sum_{i=1}^{h^{N-1}}\int_{Q_i}f(a_i,u,v,0)dx$ in $I_1$ via \eqref{4.3BFinfty}, we obtain the desired approximating sequence, just letting $\varepsilon \to 0^+$ and using a diagonalization procedure. In fact, we can say that there exist  $\{\overline{\zeta}_k\} \subset L^1(A^\ast;\mathbb R^d)$ and $\{\overline{v}_k\} \subset L^\infty(A^\ast;\mathbb R^m)$, converging to $u$ in $L^1$ and  weakly $\ast$ in $L^\infty$ to $v$, respectively, such that
$$
\begin{array}{ll}
\displaystyle{\lim_{k\to +\infty}\int_{A^\ast}f(x,\overline{\zeta}_k,\overline{v}_k,\nabla \overline{\zeta}_k)dx \leq\int_{J_u\cap A^\ast}K_r(x,0,c,d,\nu)d{\cal H}^{N-1}+\int_{A^\ast}f(x,u(x),v(x),0)dx.}
\end{array}
$$
Hence \eqref{5.20FM2infty} follows by \eqref{KKr} sending $r \to +\infty$. From the lower bound, the equality is achieved.

\noindent {\bf Step 1 d).} In order to consider $A^\ast$ any open subset of $\Omega$ such that \eqref{4.10BF} holds, it suffices to argue exactly as in Theorem \ref{upperboundL_pthm}, Step 1 d), replacing $K_p$ by $K_\infty$, weak convergence in $L^p$ by weak $\ast$ convergence in $L^\infty$ and invoking iii) in Remark \ref{3.17FMrrem}.

\noindent{\bf Step 2. } 
In order to obtain the representation
$$
{\cal J}_\infty(u,v;E)=\int_{E}K_\infty(x,0,c,d,\nu)dx,$$
when $u:= c\chi_E+ d(1-\chi_E) $, $E$ being a set of finite perimeter with unit normal $\nu$, we can argue as in Steps 2, 3 and 4 of Theorem \ref{upperboundL_pthm}, replacing the densities $f^\infty_p$ and $K_p$ by $f^\infty$ and $K_\infty$, respectively, and hypotheses $(H_1)_p-(H_3)_p$ by $(H_1)_\infty-(H_3)_\infty$. 

\noindent{\bf Part 2.} If $(u,v)\in BV\times L^\infty$ the proof is identical to the one of Part 2 in Theorem \ref{upperboundL_pthm}. It is enough to replace $K_p$ and $f^\infty_p$ therein by $K_\infty$ and $f^\infty$, respectively, and invoking the correspondent properties.\end{proof}

\medskip

\begin{proof}[Proof of Theorem \ref{MainResultinfty}] 
The proof follows by Theorems \ref{lowerboundL_inftythm} and \ref{upperboundL_inftythm}.
\end{proof}
\section{Appendix}\label{Appendix}

The following theorem is devoted to remove assumption $(H_0)$ in Theorem \ref{MainResultp}.

\begin{theorem}
\label{MainResultpnoqcx} Let $J$ be given by (\ref{functional}), with $f$
verifying  $(H_1)_p-(H_3)_p$, $1<p<\infty$, and let ${\overline J}_p$ be given by %
\eqref{relaxedp} then 
\begin{multline}
\label{reppapp}
\overline{J}_p(u,v)= \int_\Omega {\cal CQ }f(x,u,v, \nabla u) dx +
\int_{J_u} K_p(x,0, u^+, u^-,\nu_u)d\mathcal{H}^{N-1} +
\int_\Omega {\cal CQ }f_p^\infty\left(x,u,0,\tfrac{dD^cu}{d|D^c u|}\right) d|D^c u|,
\end{multline}
for every $(u,v)\in BV(\Omega;\mathbb{R}^d)\times L^p(\Omega;\mathbb{R}^m)$,
where $\mathcal{CQ}f$ denotes the convex-quasiconvex envelope of $f$, given by
\begin{equation}\nonumber
\mathcal{CQ}f(x,u, b,\xi):=\inf\left\{\frac{1}{|D|}\int_{D}f(x,u, b+\eta,\xi+\nabla \varphi)\,dy:
\eta\in L^{\infty}( D;\mathbb{R}^{m}), \int_{D}\eta dy=0,\varphi\in W_{0}^{1,\infty}(D;\mathbb{R}^d) \right\},
\end{equation}
 and $K_p$ is given by \eqref{Kp} with $f_p^\infty$ replaced
by $\mathcal{CQ}f_p^\infty$, where
\begin{equation}  \nonumber
		\mathcal{CQ}f_p^\infty(x,u,b,\xi):=\limsup_{t \to + \infty}\frac{\mathcal{CQ}f(x,u,t^{\frac{1}{p}}b,t\xi)}{t}.
\end{equation}

\end{theorem}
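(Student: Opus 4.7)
The strategy is to reduce Theorem~\ref{MainResultpnoqcx} to Theorem~\ref{MainResultp} by replacing $f$ with its convex-quasiconvex envelope $\mathcal{CQ}f$ and then showing that the two relaxed functionals coincide.

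First I would verify that $\mathcal{CQ}f$ satisfies the hypotheses $(H_0)$--$(H_3)_p$. Condition $(H_0)$ is automatic, by construction. The growth bound $(H_1)_p$ transfers to $\mathcal{CQ}f$: the upper bound is immediate from $\mathcal{CQ}f\leq f$, while the lower bound follows from the separate convexity of convex-quasiconvex functions (Remark~\ref{remplip}(ii)) combined with the lower bound in $(H_1)_p$ for $f$, inserted into the defining infimum. The continuity $(H_2)_p$ is inherited by a standard argument in which one plugs admissible test pairs $(\eta,\varphi)$ into the infimum at the two base points $(x,u)$ and $(x',u')$ and uses the modulus $\omega_K$ uniformly, together with the growth just established. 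For $(H_3)_p$, I would establish the identification $(\mathcal{CQ}f)_p^\infty = \mathcal{CQ}(f_p^\infty)$ by exploiting the positive $(p,1)$-homogeneity of the blow-up: admissible pairs $(\eta,\varphi)$ for $\mathcal{CQ}f$ evaluated at $(t^{1/p}b,t\xi)$ rescale, under $(\eta,\varphi)\mapsto (t^{1/p}\eta,t\varphi)$, to admissible pairs for $\mathcal{CQ}(f_p^\infty)(b,\xi)$, and the uniform $(H_3)_p$ control on $f$ yields the same control on $\mathcal{CQ}f$ after passing to the envelope.

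With these verifications in hand, Theorem~\ref{MainResultp} applied to $\mathcal{CQ}f$ produces an integral representation whose densities are exactly those appearing on the right-hand side of \eqref{reppapp}. The lower bound $\overline{J}_p(u,v)\geq$~R.H.S.\ of \eqref{reppapp} follows at once from $\mathcal{CQ}f\leq f$, which forces $\overline{J}_p^{\mathcal{CQ}f}\leq \overline{J}_p^f$ along any admissible sequence. For the reverse inequality I would combine two ingredients: (a) the Sobolev-case relaxation result from \cite{RZ}, stating that for $u\in W^{1,1}(\Omega;\mathbb{R}^d)$ and $v\in L^p(\Omega;\mathbb{R}^m)$ the relaxation of $\int f$ with respect to $(L^1\text{-strong})\times(L^p\text{-weak})$ convergence equals $\int \mathcal{CQ}f(x,u,v,\nabla u)\,dx$; and (b) the upper bound construction in Theorem~\ref{upperboundL_pthm}, where recovery sequences are built first for two-valued $BV$ maps with planar, polyhedral, and finite-perimeter interfaces, then glued via Whitney coverings and extended by truncation and by the approximation of $u$ through $\phi_i(u)$ to $BV\cap L^\infty$, and finally to $BV\times L^p$ by the truncation in $v$ used in Part~3 of that proof. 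Inserting into each step of that construction a diagonal perturbation supplied by (a) yields an admissible recovery sequence for $\overline{J}_p^f$ whose energy matches the right-hand side of \eqref{reppapp}.

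The main obstacle I anticipate is the identification $(\mathcal{CQ}f)_p^\infty=\mathcal{CQ}(f_p^\infty)$, which is what makes the statement coherent. The difficulty is that the $\limsup$ in \eqref{recessionp} does not a priori commute with the infimum defining the envelope; one must use the uniform control provided by $(H_3)_p$ for $f$ to extract convergent subsequences of admissible test fields for $\mathcal{CQ}f(x,u,t^{1/p}\cdot,t\cdot)/t$, rescale them consistently under the $(p,1)$-homogeneous blow-up, and pass to the limit inside the envelope, following the pattern of Proposition~\ref{proprecession}. Once this identification is secured, the remaining arguments are essentially formal consequences of Theorem~\ref{MainResultp} combined with the Sobolev-case relaxation from \cite{RZ}.
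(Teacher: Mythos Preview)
Your proposal is correct and follows essentially the same route as the paper: verify that $\mathcal{CQ}f$ inherits $(H_1)_p$--$(H_3)_p$, apply Theorem~\ref{MainResultp} to $\mathcal{CQ}f$, and then show $\overline{J}_p^{\,f}=\overline{J}_p^{\,\mathcal{CQ}f}$. The paper handles the verification of $(H_1)_p$--$(H_3)_p$ by citing \cite[Propositions~2.2 and~2.3]{RZCh}, and, like you, records the identification $(\mathcal{CQ}f)_p^\infty=\mathcal{CQ}(f_p^\infty)$ (appealing to \cite[Proposition~3.4]{BZZ}) as the point that makes the densities match.

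The only real difference is in the upper bound $\overline{J}_p^{\,f}\leq\overline{J}_p^{\,\mathcal{CQ}f}$. You propose to reopen the full construction of Theorem~\ref{upperboundL_pthm} and splice in, at every step, a diagonal perturbation coming from the Sobolev relaxation of \cite{RZ}. The paper takes the shorter path: once the Sobolev identity $\overline{J}_p^{\,f}(u,v)=\int_\Omega\mathcal{CQ}f(x,u,v,\nabla u)\,dx$ is known for $(u,v)\in W^{1,1}\times L^p$, one argues directly at the level of the infima defining $\overline{J}_p^{\,f}$ and $\overline{J}_p^{\,\mathcal{CQ}f}$ on $BV\times L^p$: pick a recovery sequence $(u_n,v_n)\subset W^{1,1}\times L^p$ for $\overline{J}_p^{\,\mathcal{CQ}f}(u,v)$, perturb each $(u_n,v_n)$ via the Sobolev result so that $\int f$ approaches $\int\mathcal{CQ}f$, and diagonalize. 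This avoids revisiting Whitney coverings, polyhedral approximation, and the truncations of Theorem~\ref{upperboundL_pthm}; those are already encapsulated in the representation of $\overline{J}_p^{\,\mathcal{CQ}f}$ furnished by Theorem~\ref{MainResultp}. Your more hands-on route would also work, but it duplicates effort.
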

\begin{proof}[Proof]
First we recall that the convex-quasiconvex envelope ${\cal CQ}f(x,u,\cdot,\cdot)$ of a function $f(x,u,\cdot,\cdot)$ is the largest convex-quasiconvex function below $f$.

In analogy with \eqref{functional} and \eqref{relaxedp} we define, for every $(u,v) \in W^{1,1}(\Omega;\mathbb{R}^d) \times L^p(\Omega ;\mathbb{R}%
^m)$, the functional 
\begin{equation}  \nonumber
J_{{\cal CQ} f}( u,v) :=\int_{\Omega}{\cal CQ }f(
x,u(x) ,v(x) ,\nabla u(x) dx,
\end{equation}
  and for every $(u,v) \in BV(\Omega;\mathbb{R}^d)
  \times L^{p}(\Omega;\mathbb{R}^{m}),$ the functional
\begin{equation}  \nonumber
\overline{J_{{\cal CQ}f_p}}(u,v) :=\inf\{ \underset{n\rightarrow +\infty}{\lim\inf}J_{{\cal CQ}f}(u_{n},v_{n}) :u_{n}\in
W^{1,1}(\Omega;\mathbb{R}^d) ,~v_{n}\in L^{p}( \Omega;%
\mathbb{R}^{m}) ,~u_{n}\rightarrow u\text{ in }L^{1},~v_{n}%
\rightharpoonup v\text{ in }L^{p}\},
\end{equation}
\noindent Clearly, it results that for every $( u,v) \in BV( \Omega;%
\mathbb R^d) \times L^p( \Omega;\mathbb R^m)$, and for every $1<p< \infty$,
\begin{equation}  \nonumber
\overline{J_{{\cal CQ}f}}_p( u,v)\leq {\overline J}_p(u,v).
\end{equation}
It remains to prove the opposite inequality.
First we observe that ${\cal CQ}f$ satisfies $(H_1)_p-(H_3)_p$. Regarding $(H_1)_p$ and $(H_2)_p$ it is enough to argue as in \cite[Proposition 2.2]{RZCh}. For what concerns $(H_3)_p$ we observe that it is equivalent to say that there exist $0<\tau< 1$ such that
\begin{equation}  \nonumber
|f^\infty_p(x,u,b,\xi)- f(x,u,b,\xi)|\leq C(1+|b|^{(1-\tau)p}+ |\xi|^{1-\tau})
\end{equation}
for every $(x,u,b,\xi) \in \Omega \times \mathbb{R}^d \times \mathbb{R}^m
\times \mathbb{R}^{d \times N}$, then, arguing as in \cite[Proposition 2.3]%
{RZCh} one can prove that $\mathcal{CQ}f_p^\infty$ inherits the same property.
Thus, applying the same arguments as in \cite[Lemma 8 and Remark 9]{RZ}, the proof is concluded.
\end{proof} 

We also observe that an argument entirely similar to \cite[Proposition 3.4]{BZZ}, guarantees that there is no ambiguity in \eqref{reppapp} when omitting the parentheses in $CQ(f^\infty_p)$, since  $(CQf)^\infty_p=CQ(f^\infty_p)$. 
When $p=\infty$ removing $(H_0)$, arguing as above, one can prove the following result.

\begin{theorem}
\label{MainResultpnoqcx} Let $J$ be given by (\ref{functional}), with $f$ and ${\cal CQ}f$
satisfying  $(H_1)_\infty-(H_3)_\infty$, and let ${\overline J}_\infty$ be given by %
\eqref{relaxedinfty} then
 \begin{equation}  \nonumber
 \begin{array}{ll}
 \displaystyle{{\overline J}_\infty(u,v)= \int_\Omega \mathcal{CQ} f(x,u,v,\nabla u) dx +
 \int_{J_u} K_{\infty}(x,0, u^+, u^-,\nu_u)d\mathcal{H}^{N-1}} \displaystyle%
 {+ \int_\Omega (\mathcal{CQ}f)^\infty\left(x,u,0,\tfrac{dD^cu}{d|D^c u|}\right)
 d|D^c u|}, & 
 \end{array}%
 \end{equation}
 where  $K_\infty$ is given by \eqref{Kinfty} with $f^\infty$ replaced
 by $(\mathcal{CQ}f)^\infty$.
\end{theorem}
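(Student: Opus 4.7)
The proof follows the same scheme as in the $BV\times L^p$ case. We define the auxiliary functionals
\begin{equation*}
J_{\mathcal{CQ}f}(u,v):=\int_\Omega \mathcal{CQ}f(x,u,v,\nabla u)\,dx,\quad (u,v)\in W^{1,1}(\Omega;\mathbb{R}^d)\times L^\infty(\Omega;\mathbb{R}^m),
\end{equation*}
and $\overline{J_{\mathcal{CQ}f}}_\infty$ accordingly, as in \eqref{relaxedinfty}. Since $\mathcal{CQ}f\leq f$ pointwise, one has $\overline{J_{\mathcal{CQ}f}}_\infty(u,v)\leq \overline{J}_\infty(u,v)$ for every $(u,v)\in BV(\Omega;\mathbb{R}^d)\times L^\infty(\Omega;\mathbb{R}^m)$. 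Since by hypothesis $\mathcal{CQ}f$ satisfies $(H_1)_\infty-(H_3)_\infty$ and is, by construction, convex-quasiconvex in its last two variables, i.e. it satisfies $(H_0)$, Theorem \ref{MainResultinfty} applies to $\mathcal{CQ}f$ and yields precisely the right-hand side of the statement (since, by Proposition \ref{Kindependentofv}, the surface density involves only $(\mathcal{CQ}f)^\infty$).

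It remains to prove the reverse inequality $\overline{J}_\infty(u,v)\leq \overline{J_{\mathcal{CQ}f}}_\infty(u,v)$. The key point is to show that for every $(u,v)\in W^{1,1}(\Omega;\mathbb{R}^d)\times L^\infty(\Omega;\mathbb{R}^m)$ with $\|v\|_{L^\infty}\le M$, there exist sequences $\{u_n\}\subset W^{1,1}(\Omega;\mathbb{R}^d)$ and $\{v_n\}\subset L^\infty(\Omega;\mathbb{R}^m)$ with $u_n\to u$ in $L^1$, $v_n\overset{\ast}{\rightharpoonup}v$ in $L^\infty$, $\|v_n\|_{L^\infty}$ uniformly bounded, and
\begin{equation*}
\lim_{n\to +\infty}\int_\Omega f(x,u_n,v_n,\nabla u_n)\,dx=\int_\Omega \mathcal{CQ}f(x,u,v,\nabla u)\,dx.
\end{equation*}
Given this, the desired inequality follows by the definition of $\overline{J_{\mathcal{CQ}f}}_\infty$ together with a diagonalization argument, exactly as in \cite[Lemma 8, Remark 9]{RZ}. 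The construction of such sequences proceeds via the classical Acerbi--Fusco type argument: at Lebesgue points $x_0$ of $(u,v,\nabla u)$ one picks, out of the definition of $\mathcal{CQ}f$, test pairs $(\eta,\varphi)\in L^\infty(Q;\mathbb{R}^m)\times W^{1,\infty}_0(Q;\mathbb{R}^d)$ with $\int_Q\eta=0$ attaining $\mathcal{CQ}f(x_0,u(x_0),v(x_0),\nabla u(x_0))$ up to error $\varepsilon$; then one rescales and glues these into $\Omega$ on a fine grid, using the boundary conditions of $\varphi$ to ensure $W^{1,1}$-regularity and the zero-average property of $\eta$ together with the $L^\infty$-bound $\|v\|_{L^\infty}\le M$ to keep $\{v_n\}$ uniformly bounded in $L^\infty$ while producing the weak$^\ast$ convergence to $v$.

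The main technical obstacle is precisely controlling the oscillating additive perturbations $\eta$ in $L^\infty$, since in the $p=\infty$ regime one cannot invoke $p$-equi-integrability. This is overcome exactly as in the proof of Remark \ref{3.17FMrrem} ii) and Lemma \ref{3.1FMr}: one works with the truncated Yosida--type approximation of $\mathcal{CQ}f$ on compact sets in $(x,u)$ (Proposition \ref{propYosida}), exploiting the $(\infty,1)$-Lipschitz continuity \eqref{infty-lipschitzcontinuity} of $\mathcal{CQ}f$ in the last two variables --- which is inherited from $f$ satisfying $(H_1)_\infty$ via \cite[Proposition 4]{RZ} --- and the fact that ${(\mathcal{CQ}f)^\infty}=\mathcal{CQ}(f^\infty)$, which follows from an argument entirely analogous to \cite[Proposition 3.4]{BZZ}. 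Once the pointwise approximation and the $L^\infty$-bound are secured, the standard blow-up/covering procedure used in the proofs of Theorems \ref{lowerboundL_inftythm} and \ref{upperboundL_inftythm} applies verbatim, so that no new ideas are needed beyond those already developed in Section \ref{MainLinfty}.
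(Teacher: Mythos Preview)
Your approach is essentially the one the paper intends: the paper's own ``proof'' of this theorem is the single sentence ``When $p=\infty$ removing $(H_0)$, arguing as above, one can prove the following result'', i.e.\ it simply transplants the argument of the preceding $BV\times L^p$ theorem, which in turn hinges on the inequality $\overline{J_{\mathcal{CQ}f}}_\infty\le\overline{J}_\infty$ together with the reverse inequality obtained from \cite[Lemma 8 and Remark 9]{RZ}. You reproduce exactly this skeleton, and you correctly note that here the assumption that $\mathcal{CQ}f$ satisfies $(H_1)_\infty$--$(H_3)_\infty$ is part of the hypotheses, so no separate verification is needed.

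One minor comment: your last paragraph somewhat over-engineers the argument. The Yosida transform (Proposition \ref{propYosida}) and the slicing lemma (Lemma \ref{3.1FMr}, Remark \ref{3.17FMrrem} ii)) are tools used in the $BV$-level upper bound proofs to handle $(x,u)$-dependence and boundary matching, not for the Sobolev-level relaxation $\overline{J}_\infty=\overline{J_{\mathcal{CQ}f}}_\infty$. That identity is exactly the content of \cite[Lemma 8, Remark 9]{RZ} (the $W^{1,1}\times L^\infty$ relaxation), and once you invoke it the result follows directly; no further blow-up or covering is required here. The identity $(\mathcal{CQ}f)^\infty=\mathcal{CQ}(f^\infty)$ you mention is a separate observation the paper makes (after the $L^p$ theorem, via \cite[Proposition 3.4]{BZZ}) and is not needed for the proof of the present statement as formulated.
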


\begin{remark}
\label{final}
Let $\alpha: [0,+\infty)\to [0,+\infty)$ be a convex and increasing function, with $\alpha(0)=0$ such that the following assumptions hold:

\noindent $(H_1)_\alpha $ There exists a constant $C>0$  such that 
$$\frac{1}{C}(\alpha(|b|)+ |\xi|)-C \leq f(x,u,b,\xi) \leq C(1+ \alpha(|b|)+
|\xi|)$$ for a.e. $(x,u)\in \Omega \times \mathbb{R}^d$ and for every $%
(b,\xi)\in \mathbb{R}^m \times \mathbb{R}^{d \times N}$.

\noindent $(H_2)_\alpha$ For every compact set $K \subset \Omega \times \mathbb{R}^d$ there
exists a continuous function $\omega^{\prime }_{K}:\mathbb{R }\to [0,
+\infty)$ such that $\omega^{\prime }_{K}(0)=0$ and 
\begin{equation}  \nonumber
|f(x,u,b,\xi)- f(x^{\prime },u^{\prime },b,\xi)| \leq \omega^{\prime
}_{K}(|x-x^{\prime }|+|u-u^{\prime }|)(1 + \alpha(|b|)+ |\xi|),\ 
\end{equation}
for every $(x,u),(x^{\prime },u^{\prime })\in K,\ \forall\ (b,\xi) \in \mathbb{%
R}^m \times \mathbb{R}^{d \times N}.$

\noindent For each $x_0 \in \Omega$ and $\varepsilon >0$, there exists $%
\delta>0$ such that 
\begin{equation}  \nonumber
|x-x_0|\leq \delta\ \Rightarrow\ f(x,u,b,\xi) - f(x_0,u,b,\xi) \geq
-\varepsilon(1 +\alpha(|b|) +|\xi|),
\end{equation}
for every $(u,b,\xi)\in \mathbb{R}^d \times \mathbb{R}^m \times \mathbb{R}^{d\times N}.$

\noindent $(H_3)_\alpha$ There exist $C>0$ and $0<\tau<1$ such that 
$$ |f(x,u,b,\xi)- f^\infty(x,u,b,\xi)|\leq C(1+\alpha^{1-\tau}(|b|)+|\xi|^{1-\tau}),
$$
for every $(x,u,b,\xi)\in \Omega \times \mathbb R^d \times \mathbb R^m\times \mathbb R^{d\times N}$.

We observe that if one replaces $(H_1)_\infty-(H_3)_\infty$ by $(H_1)_\alpha- (H_3)_\alpha$ then, arguing as in \cite[Propositions 2.1, 2.2 and 2.3]{RZCh} 
$$
{\cal CQ}(f^\infty)(x,u,b,\xi)=({\cal CQ}f)^\infty(x,u,b,\xi),
$$
for every $(x,u,b,\xi)\in \Omega \times \mathbb R^d \times \mathbb R^m\times \mathbb R^{d\times N}$, and ${\cal CQ}f$ satisfies $(H_1)_\alpha-(H_3)_\alpha$. 

Thus an analogous of Theorem \ref{MainResultpnoqcx} holds with obvious modifications just imposing $(H_1)_\alpha-(H_3)_\alpha$ on $f$.

\end{remark}

\noindent{\bf Acknowledgments.} 
This paper has been written during various visits of the authors at Departamento de Matem\'atica da Universidade
de \'Evora, at Dipartimento di Ingegneria Industriale dell' Universit\'a di Salerno and Carnegie Mellon University, whose kind
hospitality and support are gratefully acknowledged.
The authors are indebted to Irene Fonseca and Ana Margarida Ribeiro for the many discussions
on the subject.
The work of both authors was partially supported by Funda\c{c}\~{a}o para a Ci\^encia e a Tecnologia (Portuguese
Foundation for Science and Technology) through CIMA-UE, UTA-CMU/MAT/0005/2009.
They acknowledge the support of GNAMPA through the programs 'Professori Visitatori 2015' and through the project `Un approccio variazionale alla stabilit\`a di sistemi di reazione-diffusione non lineari' 2015.
The second author is member of the Gruppo Nazionale per l'Analisi Matematica, la Probabilit\'a e le loro Applicazioni (GNAMPA) of the Istituto Nazionale di Alta Matematica (INdAM).

\end{document}